\newcommand{\nc}{\newcommand}
\nc{\dmo}{\DeclareMathOperator}
\numberwithin{equation}{section}
\numberwithin{figure}{section}
\theoremstyle{plain}
\newtheorem{thm}{Theorem}[section]
\newtheorem{lem}[thm]{Lemma}
\newtheorem{prop}[thm]{Proposition}
\newtheorem{cor}[thm]{Corollary}
\newtheorem{que}[thm]{Problem}
\theoremstyle{remark}
\newcommand{\discColor}{Black}
\newcommand{\curveColor}{Maroon}
\newcommand{\discSizeX}{2.1}
\theoremstyle{definition}
\newtheorem{defn}[thm]{Definition}
\newtheorem{rmk}[thm]{Remark}
\newtheorem{nota}[thm]{Notation}
\nc{\para}[1]{\medskip\noindent\textbf{#1.}}
\title{Section problems for configuration spaces of surfaces}
\author{Lei Chen}
\begin{document}
 \bibliographystyle{alpha}
\maketitle

\begin{abstract}
In this paper we give a close-to-sharp answer to the basic questions: When is there a continuous way to add a point to a configuration of $n$ ordered points on a surface $S$ of finite type so that all the points are still distinct? When this is possible, what are all the ways to do it?  More precisely, let PConf$_n(S)$ be the space of ordered $n$-tuple of distinct points in $S$.  Let $f_n(S): \text{PConf}_{n+1}(S) \to \text{PConf}_n(S)$ be the map given by $f_n(x_0,x_1,\ldots ,x_n):=(x_1,\ldots ,x_n)$. We classify all continuous sections of $f_n$ up to homotopy by proving the following.

1. If $S=\mathbb{R}^2$ and $n>3$, any section of $f_{n}(S)$ is either ``adding a point at infinity" or ``adding a point near $x_k$". (We define these two terms in Section 2.1; whether we can define ``adding a point near $x_k$" or ``adding a point at infinity" depends in a delicate way on properties of $S$. )

2. If $S=S^2$ a $2$-sphere and $n>4$, any section of $f_{n}(S)$ is ``adding a point near $x_k$"; if $S=S^2$ and $n=2$, the bundle $f_n(S)$ does not have a section. (We define this term in Section 3.2)

3. If $S=S_g$ a surface of genus $g>1$ and for $n>1$, we give an easy proof of \cite[Theorem 2]{MR1977999} that the bundle $f_{n}(S)$ does not have a section. 

\end{abstract}
\section{Introduction}
Let $M$ be a manifold. There is a natural geometric question: How can we continuously introduce a new point on $M$ for any collection of $n$ distinct points on $M$? We denote by PConf$_n(M)$ \emph{the pure configuration space} parametrizing ordered n-tuple of distinct points on $M$. Let $f_n(M): \text{PConf}_{n+1}(M) \to \text{PConf}_n(M)$ be the map given by $f_n(x_0,x_1,\ldots ,x_n):=(x_1,\ldots ,x_n)$. There is a natural action of permutation group $\Sigma_n$ on PConf$_n(M)$ by permuting the $n$ points. Permutation group $\Sigma_n$ acts on the fiber bundle $f_n(M)$ as well. Thus we get a new fiber bundle $F_n(M): \text{PConf}_{n+1}(M)/\Sigma_n \to  \text{PConf}_n(M)/\Sigma_n$, given by $F_n(x_0,\{x_1,\ldots ,x_n\}):=\{x_1,\ldots ,x_n\}$. The quotient $\text{PConf}_n(M)/\Sigma_n=:\text{Conf}_n(M)$ is called \emph{the configuration space} parametrizing unordered n-tuple of distinct points on $M$. In this article, we will study the existence and uniqueness of sections of  $f_n(M)$ and $F_n(M)$ when $M$ is a surface.

The study of sections of configuration spaces of open manifolds goes back to the work of McDuff and Segal \cite{MR0353298} \cite{MR0358766}. They introduce a point ``at infinity", which allows them to prove homological stability for configuration spaces. For closed manifolds, the possibility of adding a point depends on the topology of the manifold. For a manifold $M$ with a nowhere vanishing vector field, Cantero and Palmer \cite{MR3398727}, Berrick, Cohen, Wong and Wu \cite{MR2188127} introduced another way to add a new point by adding a point infinitesimally near an old point using the vector field. This allows Ellenberg and Wiltshire-Gordon \cite{jordan} to improve eventual polynomiality to immediate polynomiality of the betti numbers of PConf$_n(M)$ for some closed manifolds $M$. The following figures illustrate adding a point ``at infinity" and adding a point infinitesimally near an old point on the plane $\mathbb{R}^2$.

\begin{figure}[H]
\centering
  \includegraphics[scale=0.5]{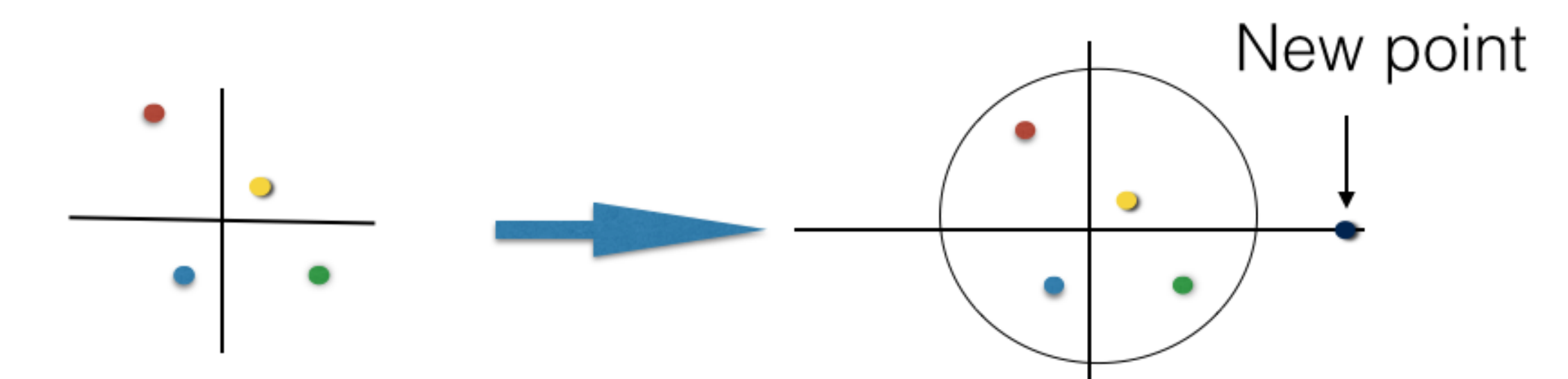}
  \caption{``adding a point at infinity"}
\label{neaki}
\end{figure}
\begin{figure}[H]
 \centering
  \includegraphics[scale=0.5]{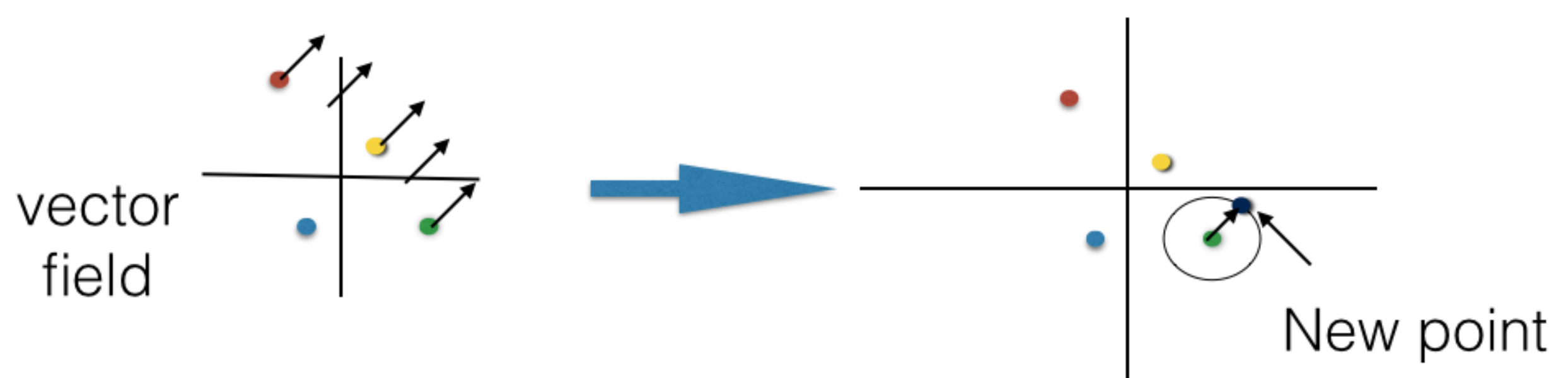}
  \caption{``adding a point near $x_k$"}
  \label{neark}
\end{figure}

 We call a section $s$ of $f_n(\mathbb{R}^2)$ (resp. $f_n(S^2)$) \emph{``adding a point near $x_k$"} if $s$ is homotopic to an element in the collection of sections Add$_{n,k}(\mathbb{R}^2)$ (resp. Add$_{n,k}(S^2)$). Informally, we assign $x_0$ at a sufficiently small distance to $x_k$ along some nonvanishing vector field. See Figure \ref{neark} for a demonstration of ``adding a point near $x_k$". Notice that there are infinitely many homotopy classes of sections in Add$_{n,k}(\mathbb{R}^2)$ and Add$_{n,k}(S^2)$ and they are classified by a kind of twists or sections of a circle bundle. See Section 2.1 and Section 3.2 for formal definitions of Add$_{n,k}(\mathbb{R}^2)$ and Add$_{n,k}(S^2)$ respectively.

We call a section $s$ of $f_n(\mathbb{R}^2)$ \emph{``adding a point at infinity"} if $s$ is homotopic to an element in the collection of sections Add$_{n,\infty}(\mathbb{R}^2)$; see Figure \ref{neaki}. Informally, we consider $\mathbb{R}^2$ as $S^2$ missing a point $\infty$, we can assign $x_0$ at a sufficiently small distance to $\infty$ along some nonvanishing vector field. See Section 2.1 for a formal definition of Add$_{n,\infty}(\mathbb{R}^2)$. 

Let $S_g$ be a surface of genus $g$ and $S^2$ be the $2$-sphere. In this paper, we will classify the sections of the fiber bundle $f_n(S)$ for 3 cases: $\mathbb{R}^2$, $S^2$ and $S_g$ when $g>1$. Here by \emph{section} we mean continuous section.
\begin{thm}[\bf\boldmath Classification of sections for ordered configurations]The following holds:

(1) If $S=\mathbb{R}^2$ and $n>3$, any section of $f_{n}(S)$ is either ``adding a point at infinity" or ``adding a point near $x_k$" for some $1\le k\le n$.

(2) If $S=S^2$ and $n=2$, the bundle $f_n(S)$ does not have a section. If $S=S^2$ and $n>4$, any section of $f_{n}(S)$ is ``adding a point near $x_k$" for some $1\le k\le n$.

(3) If $S=S_g$ a surface of genus $g>1$ and for $n>1$, the bundle $f_{n}(S)$ does not have a section.  
\label{main}
\end{thm}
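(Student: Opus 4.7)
The plan is to reduce the problem to classifying group-theoretic splittings of the Birman short exact sequence, and then analyze each case via the structure of pure braid groups and mapping class groups of surfaces. First, note that $\text{PConf}_n(S)$ is aspherical in each of the cases at hand, since each fiber $S\setminus\{x_1,\ldots,x_n\}$ is a $K(\pi,1)$-space whenever it has nontrivial fundamental group, which holds once $n$ is large enough in each of our surfaces. Thus sections of $f_n(S)$ up to homotopy correspond bijectively to conjugacy classes of group-theoretic splittings of
\[
1 \longrightarrow \pi_1\bigl(S\setminus \{x_1,\ldots,x_n\}\bigr) \longrightarrow \pi_1\bigl(\text{PConf}_{n+1}(S)\bigr) \longrightarrow \pi_1\bigl(\text{PConf}_n(S)\bigr) \longrightarrow 1.
\]
The remainder of the proof amounts to classifying these algebraic sections in each case.

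For case (3), the surface $S_g$ with $g>1$, I would recover the Gon\c calves--Guaschi nonexistence result by a direct obstruction computation. A splitting of the Birman sequence, pulled back along the loop that pushes the added point $x_0$ around a simple closed curve avoiding the marked set, would yield a consistent nonvanishing tangent direction near each $x_i$ that must extend compatibly with the global topology of $S_g$. Tracking the twist via the Euler class of the vertical tangent bundle of $f_n(S_g)$ forces a vanishing condition equivalent to $\chi(S_g)=0$, contradicting $g>1$.

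For case (2), the sphere $S^2$: the case $n=2$ follows from the standard obstruction theoretic argument — $\text{PConf}_3(S^2)$ is homotopy equivalent to $\text{PSL}_2(\mathbb{R})$ and a section of the forgetful map to $\text{PConf}_2(S^2)\simeq S^2\setminus\{\text{pt}\}$ would give a nowhere-zero vector field on $S^2$. For $n>4$, my strategy is to use the simply transitive action of $\text{PSL}_2(\mathbb{C})$ on ordered triples of distinct points of $S^2$ to normalize three of the $n$ marked points to $0,1,\infty$; this reduces the section problem on $S^2$ to a section problem on $\mathbb{R}^2\setminus\{0,1\}$ with $n-3$ additional marked points, which can then be attacked by the techniques developed for case (1), with the "adding a point at infinity" possibility killed precisely because $\infty$ is now a marked point.

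For case (1), $S=\mathbb{R}^2$ with $n>3$, the Birman sequence becomes $1\to F_n\to P_{n+1}\to P_n\to 1$, where $F_n$ is the free group of rank $n$ and $P_m$ is the pure braid group. Conjugacy classes of splittings correspond to classes of 1-cocycles $P_n\to F_n$ under the Artin action, and the two explicit families from Section~2.1 — "adding a point near $x_k$" and "adding a point at infinity" — produce specific such cocycles. The main obstacle, and I expect the hardest step, is proving these exhaust all splittings. I would proceed by studying the restriction of a hypothetical section $s\colon P_n\to P_{n+1}$ to the Artin generators $A_{ij}$ and to the center of $P_n$, using the fact that $s(A_{ij})$ must project to $A_{ij}$ and then applying rigidity properties of pure braid groups (controlling how a homomorphism can act on canonical abelian and nilpotent subquotients) to force $s$, up to conjugation, onto one of the listed families. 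The hypothesis $n>3$ should be used to guarantee enough independent generators $A_{ij}$ for this rigidity argument to pin down $s$ uniquely.
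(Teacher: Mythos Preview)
Your reduction to conjugacy classes of algebraic splittings of the Birman sequence is correct and matches the paper's setup, and your strategy for case~(2) with $n>4$ (normalize three points via $\text{PSL}_2(\mathbb{C})$ to reduce to the planar case) is essentially the paper's approach. However, there are genuine gaps in the remaining parts.

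For case~(1), the paper's argument is not the cocycle/rigidity-on-abelian-quotients approach you sketch. The key tool is Thurston's canonical reduction system: one first shows (using McCarthy's result on centralizers of pseudo-Anosovs together with the lantern relation) that for any simple closed curve $a$ on $D_n$, the lift $s(T_a)$ is a multitwist on at most two curves, each of which becomes either $a$ or trivial after forgetting $x_0$. Then a delicate case analysis, driven entirely by repeated applications of the lantern relation and the characterization ``$T_aT_b$ is a multitwist $\Leftrightarrow i(a,b)=2$'', forces $s(PB_n)$ to preserve a specific curve surrounding either $\{x_0,x_k\}$ or $\{x_1,\ldots,x_n\}$. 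Your phrase ``rigidity properties of pure braid groups (controlling how a homomorphism can act on canonical abelian and nilpotent subquotients)'' does not capture this; the abelianization of $P_n$ is too coarse to distinguish the various section types, and I do not see how a nilpotent-quotient argument would produce the invariant curve. This is the main technical content of the paper and your proposal does not supply it.

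For case~(3), your Euler-class sketch is not a proof. An arbitrary section of $f_n(S_g)$ gives a map $\text{PConf}_n(S_g)\to S_g$ avoiding each $x_i$, but there is no a~priori reason this map should be close to any particular $x_k$, so it does not produce a tangent direction and the obstruction $\chi(S_g)\neq 0$ does not directly apply. The paper instead invokes a classification theorem: every homomorphism $\pi_1(\text{PConf}_n(S_g))\to\pi_1(S_g)$ either has cyclic image or factors through some projection $p_{i*}$. Each alternative is then ruled out by computing pullbacks of the diagonal class $[\triangle]\in H^2(S_g\times S_g;\mathbb{Q})$ and showing they cannot vanish. Your argument would need to either reprove that classification or find a genuinely different obstruction. (Also, for case~(2) with $n=2$: $\text{PConf}_3(S^2)\simeq \text{PSL}_2(\mathbb{C})$, not $\text{PSL}_2(\mathbb{R})$, and $\text{PConf}_2(S^2)\simeq S^2$, not $S^2\setminus\{\text{pt}\}$; the paper's argument here is again cohomological via the diagonal class.)
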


For unordered case, we have the following corollary.

\begin{cor}[\bf\boldmath Classification of sections for unordered configurations]The following holds:

(1) If $S=\mathbb{R}^2$ and $n>3$, any section of $F_{n}(S)$ is ``adding a point at infinity";

(2) If $S=S^2$ and $n>4$ or $n=2,3$, the bundle $F_{n}(S)$ does not have a section.\label{main11}
\end{cor}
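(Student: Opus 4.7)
The strategy is to deduce the corollary from Theorem \ref{main} by relating sections of the unordered bundle to $\Sigma_n$-equivariant sections of the ordered one via pullback, and then to handle the case $n=3$, $S=S^2$ (not covered by Theorem \ref{main}) with a separate obstruction argument.

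I first record that the square
\[
\begin{array}{ccc}
\text{PConf}_{n+1}(S) & \longrightarrow & \text{PConf}_{n+1}(S)/\Sigma_n \\
\downarrow f_n & & \downarrow F_n \\
\text{PConf}_n(S) & \stackrel{p}{\longrightarrow} & \text{Conf}_n(S)
\end{array}
\]
is Cartesian, its horizontal arrows being the canonical $\Sigma_n$-covers. Hence any section of $F_n$ pulls back along $p$ to a section of $f_n$ which is $\Sigma_n$-equivariant (the added point depends only on the underlying unordered set), and conversely every $\Sigma_n$-equivariant section of $f_n$ descends. The task thus reduces to classifying $\Sigma_n$-equivariant homotopy classes of sections of $f_n(S)$. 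For the cases covered by Theorem \ref{main}, the induced $\Sigma_n$-action on the candidate classes permutes $\text{Add}_{n,k}$ by $k\mapsto\sigma(k)$ while fixing $\text{Add}_{n,\infty}$; since the classes $\text{Add}_{n,k}$ for distinct $k$ are pairwise non-homotopic, no equivariant section can lie in any $\text{Add}_{n,k}$. This yields (1) (only $\text{Add}_{n,\infty}$ survives, and it is manifestly equivariant) and (2) for $n>4$ on $S^2$ (no equivariant class exists). The case $n=2$ on $S^2$ follows immediately from the non-existence part of Theorem \ref{main}.

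The remaining case $n=3$, $S=S^2$ is the main obstacle. Here I would use the simply transitive action of $PSL_2(\mathbb{C})$ on ordered triples to identify $\text{PConf}_3(S^2)\cong PSL_2(\mathbb{C})$ and trivialize $f_3(S^2)\cong PSL_2(\mathbb{C})\times X$ with $X:=S^2\setminus\{0,1,\infty\}$; the induced $\Sigma_3$-action on $X$ is by the three Möbius transformations permuting $\{0,1,\infty\}$. A section of $F_3(S^2)$ then corresponds to a $\Sigma_3$-equivariant map $\phi:PSL_2(\mathbb{C})\to X$, equivalently to a splitting of the group extension
\[
1\to F_2\to\pi_1\bigl(\text{PConf}_4(S^2)/\Sigma_3\bigr)\to B_3(S^2)\to 1.
\]
To complete the proof, I would show this extension does not split: taking a torsion lift $\tau\in B_3(S^2)$ of one of the three transpositions in $\Sigma_3$, a hypothetical splitting would produce an order-$2$ element of $F_2\rtimes B_3(S^2)$ projecting to $\tau$; tracking the monodromy action of $\tau$ on $\pi_1(X)=F_2$ together with the analogous constraints for the other two transpositions forces a common invariant element of $F_2$, ruled out by the fact that the three Möbius involutions on $X$ have no common fixed point. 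This simultaneous non-existence of fixed points is the central technical step and the main difficulty.
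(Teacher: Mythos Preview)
Your treatment of part (1) and of part (2) for $n>4$ and $n=2$ is correct and matches the paper's argument: the paper phrases the equivariance constraint as a conjugation identity on fundamental groups (diagram \eqref{DG}), but the content---that a section of $F_n$ restricts to a section of $f_n$ whose homotopy type is invariant under relabelling the $n$ points, and hence cannot lie in any $\text{Add}_{n,k}$ with $k\neq\infty$---is the same.

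For $n=3$ on $S^2$ there is a genuine gap. The group $B_3(S^2)=\pi_1(\text{Conf}_3(S^2))$ is the dicyclic group of order $12$; its unique involution is central and lies in the kernel $PB_3(S^2)\cong\mathbb{Z}/2$ of the projection to $\Sigma_3$. Consequently every lift to $B_3(S^2)$ of a transposition in $\Sigma_3$ has order $4$, not $2$, so your step ``a hypothetical splitting would produce an order-$2$ element \dots\ projecting to $\tau$'' is false as written. Even granting a repair via order-$4$ lifts, the bridge you sketch from ``no common fixed point of the three M\"obius involutions on $X$'' to non-splitting of the extension is not a proof; at best it hints at a fixed-point computation in nonabelian cohomology that you have not carried out and that you yourself flag as the main difficulty. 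The paper sidesteps all of this: it identifies the relevant extension with
\[
1\to \text{PMod}(S_{0,4})\to \text{Mod}(S_{0,3,1})\to \Sigma_3\to 1
\]
and applies the Nielsen Realization Theorem. A splitting would embed $\Sigma_3$ as a finite subgroup of $\text{Mod}(S_{0,3,1})$, which would then be realized by orientation-preserving diffeomorphisms of $S^2$ fixing the marked point; but any such finite group is cyclic (it linearizes at the fixed point), while $\Sigma_3$ is not. This argument is both shorter and more robust than the direct attack you outline.
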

\begin{rmk}
We discuss the exceptional cases when $n=3$ for $S=S^2$ in Section \ref{exception}. Our method does not work for the case $n=4$ but \cite[Theorem 2]{MR2149513} proved that $F_4(S^2)$ does not have sections. The $g=1$ case seems to be more complicated to analyze, therefore we do not pursue here. Notice that the construction ``adding a point near $x_k$" works for the torus as well; see Section 2.1.
\end{rmk}

It is classical that $f_n(\mathbb{R}^2)$ admits a section. In \cite[Theorem 3.1]{FE}, Fadell showed that when $n>2$, the bundle $f_n(S^2)$ admits a section. The unordered case for $S=\mathbb{R}^2$, i.e. (1) of Corollary \ref{main11} has been proved by \cite[Main Theorem 2]{MR2253663} and \cite[Theorem 4]{MR3499033}. In \cite[Theorem 2]{MR2149513}, they prove the case (2) of Corollary \ref{main11}, and even stronger, they deal with the multi-section problems. All the previous proofs make use of the braid relation and the presentations of braid groups and do not imply (1) and (2) in Theorem \ref{main}. Our main novelty is to use the characterization of lantern relation in analyzing the canonical reduction systems. The canonical reduction system uses the Thurston classification of isotopy classes of diffeomorphisms of surfaces. This idea originated from \cite{MR726319}.
 
The ordered case for $S=S_g$ of $g>1$, i.e. (3) of Theorem \ref{main} has been proved by \cite[Theorem 2]{MR1977999}. Their proof makes heavily use of the presentations of surface braid group. We give a simpler proof using the cohomology of surface braid group and a classification theorem in \cite[Theorem 5]{lei1}.

\para{The structure of the paper}
\begin{itemize}
\item In Section 2, we introduce the construction and the main tool we use: canonical reduction system.
\item In Section 3, we reduce Theorem \ref{main}(1) to a more algebraic statement Theorem \ref{PB}.
\item In Section 4, we prove Theorem \ref{PB}, which is the main work of this paper.
\item In Section 5, we prove Theorem \ref{main}(2) by reducing it to Theorem \ref{main}(1).
\item In Section 6, we prove Theorem \ref{main}(3) by a classification of maps between configuration spaces of surfaces in \cite{lei1}.
\item In Section 7, we ask further questions.
\end{itemize}
\para{Acknowledgements}
The author would like to thank Kevin Casto for telling her about the construction of adding a nearby point and thank Nir Gadish for discussion. She would also like to thank Paolo Bellingeri, Dan Margalit, 
Cihan Bahran and an anonymous referee for pointing out many references of previous works on braid groups and typos. Finally, she would like to thank her advisor Benson Farb for his extensive comments and for his invaluable support from start to finish.

\section{The construction and background on canonical reduction systems}
Let $S$ be a surface and let PConf$_n(S)$ \emph{the pure configuration space} be the space of ordered n-tuple of distinct points on $S$. The natural embedding PConf$_n(S)\subset S^n$ gives the topology on PConf$_n(S)$. Let $f_n(S): \text{PConf}_{n+1}(S) \to \text{PConf}_n(S)$ be the map given by $f_n(x_0,x_1,\ldots ,x_n):=(x_1,\ldots ,x_n)$.

There is a natural action of permutation group $\Sigma_n$ on PConf$_n(S)$ by permuting the $n$ points. Thus the quotient space $\text{Conf}_n(S)$ is the space of unordered $n$-tuple of distinct points in $S$. Permutation group $\Sigma_n$ acts on the fiber bundle $f_n(S)$ as well. Let $F_n(S): \text{PConf}_{n+1}(S)/\Sigma_n \to  \text{PConf}_n(S)/\Sigma_n$ be the map given by $F_n(x_0,\{x_1,\ldots ,x_n\}):=\{x_1,\ldots ,x_n\}$. The subject of this section is to classify the sections of the fiber bundles $f_n(\mathbb{R}^2)$ and $F_n(\mathbb{R}^2)$.

\subsection{Constructing sections}
In this subsection we give constructions of sections of the fiber bundle $f_n(\mathbb{R}^2)$. There are two cases: ``adding a point near $x_k$" and ``adding a point at infinity". These constructions originate from Berrick, Cohen, Wong and Wu \cite{MR2188127}, but the idea appeared in \cite{MR0141126}.\\
\\
{\bf\boldmath Case 1: adding a point near $x_k$.} Define
\[\text{PConf}_{n,k}(\mathbb{R}^2)=\{(v_k,x_1,...,x_n)|x_1,...,x_n \text{ be } n \text{ points on } \mathbb{R}^2 \text{ and } v_k \text{ be a unit vector at } x_k\}.
\]
This is the total space of a circle bundle by forgetting the vector $ v_k$
\begin{equation}
S^1\to \text{PConf}_{n,k}(\mathbb{R}^2)\to \text{PConf}_{n}(\mathbb{R}^2).
\label{vector}
\end{equation}
Equip $\mathbb{R}^2$ with the Euclidean metric. Set
\[
\epsilon(x_1,...,x_n)=\frac{1}{2} \text{min}_{1\le i\neq j\le n}\{d(x_i,x_j)\}.
\]
By the definition of $\epsilon(x_1,...,x_n)$, setting $x_0$ to be the image of the $v_k$-flow at time $\epsilon(x_1,...,x_n)$ from $x_k$ gives a map:
\[
em_{n,k}(\mathbb{R}^2):\text{PConf}_{n,k}(\mathbb{R}^2)\hookrightarrow\text{PConf}_{n+1}(\mathbb{R}^2).
\] 
Composing a continuous section $s:\text{PConf}_{n}(\mathbb{R}^2)\to \text{PConf}_{n,k}(\mathbb{R}^2)$ of the fiber bundle (\ref{vector}) with $em_{n,k}(\mathbb{R}^2)$ gives a section of the fiber bundle $f_n(\mathbb{R}^2)$. 

\begin{defn}[\bf\boldmath Adding a point near $x_k$]
We denote by Add$_{n,k}(\mathbb{R}^2)$ the collection of sections of $f_n(\mathbb{R}^2)$ consisting of compositions of a section of \eqref{vector} with $em_{n,k}(\mathbb{R}^2)$.
\end{defn}
Notice that there are infinitely many homotopy classes of sections in Add$_{n,k}(\mathbb{R}^2)$ and they are in one-to-one correspondence with the homotopy classes of sections of \eqref{vector}.\\
\\
\noindent
{\bf\boldmath Case 2: adding a point at infinity.} Let us call the north pole of a 2-sphere the point at infinity $\infty$. Then $\mathbb{R}^2\cong S^2-\infty$ through the stereographic projection. Define
\[\text{PConf}_{n,\infty}(\mathbb{R}^2)=\{(v_\infty,x_1,...,x_n)|x_1,...,x_n \text{ be } n \text{ points on } \mathbb{R} \text{ and } v_\infty \text{ be a unit vector at } \infty\}.
\]

This is the total space of a circle bundle by forgetting the vector

\begin{equation}
S^1\to \text{PConf}_{n,\infty}(\mathbb{R}^2)\to \text{PConf}_{n}(\mathbb{R}^2).
\label{ivector}
\end{equation}

Equip $S^2$ with the spherical metric; i.e. the metric that is induced from the standard embedding $S^2\subset \mathbb{R}^3$. Set
\[
\epsilon(x_1,...,x_n)=\frac{1}{2} \text{min}_{1\le i\le n}\{d(x_i,\infty)\}.
\]
By the definition of $\epsilon(x_1,...,x_n)$, setting $x_0$ to be the image of the $v_{\infty}$-flow at time $\epsilon$ from $\infty$ gives a map:
\[
em_{n,\infty}(\mathbb{R}^2):\text{PConf}_{n,\infty}(\mathbb{R}^2)\hookrightarrow \text{PConf}_{n+1}(\mathbb{R}^2).
\] 
Composing a continuous section $s:\text{PConf}_{n}(\mathbb{R}^2)\to \text{PConf}_{n,\infty}(\mathbb{R}^2)$ of the fiber bundle (\ref{ivector}) with $em_{n,\infty}(\mathbb{R}^2)$ gives a section of the fiber bundle $f_n(\mathbb{R}^2)$.
\begin{defn}[\bf\boldmath Adding a point at infinity]
We denote by Add$_{n,\infty}(\mathbb{R}^2)$ the collection of sections of $f_n(\mathbb{R}^2)$ consisting of compositions of a section of \eqref{ivector} with $em_{n,\infty}(\mathbb{R}^2)$.
\end{defn}
Notice that there are infinitely many homotopy classes of sections in Add$_{n,\infty}(\mathbb{R}^2)$ and they are in one-to-one correspondence with the homotopy classes of sections of \eqref{ivector}.

\subsection{Background}
In this subsection we discuss some properties of canonical reduction systems and the lantern relation. Let $S=S_{g,p}^b$ be a surface with $b$ boundary components and $p$ punctures. Let Mod$(S)$ (reps. PMod$(S)$) be the \emph{mapping class group} (resp. \emph{pure mapping class group}) of $S$, i.e. the group of isotopy classes of orientation-preserving diffeomorphisms of $S$ fixing the boundary components pointwise and punctures as a set (resp. pointwise).  By ``simple closed curves", we often mean isotopy class of simple closed curves, e.g. by ``preserve a simple closed curve", we mean preserve the isotopy class of a curve. 

Thurston's classification of elements of Mod$(S)$ is a very powerful tool to study mapping class groups. We call a mapping class $f\in\text{Mod}(S)$ \emph{reducible} if a power of $f$ fixes a nonperipheral simple closed curve. Each nontrivial element $f\in\text{Mod}(S)$ is of exactly one of the following types: periodic, reducible, pseudo-Anosov. See \cite[Chapter 13]{BensonMargalit} and \cite{FLP} for more details. We now give the definition of canonical reduction system. 

\begin{defn}[{\bf Reduction systems}]
 A \emph{reduction system} of a reducible mapping class $h$ in $\text{\normalfont Mod}(S)$ is a set of disjoint nonperipheral curves that $h$ fixes as a set up to isotopy. A reduction system is \emph{maximal} if it is maximal with respect to inclusion of reduction systems for $h$. The \emph{canonical reduction system} $\text{\normalfont CRS}(h)$ is the intersection of all maximal reduction systems of $h$. 
\end{defn}
For a reducible element $f$, there exists $n$ such that $f^n$ fixes each element in CRS$(f)$ and after cutting out CRS$(f)$, the restriction of $f^n$ on each component is either periodic or pseudo-Anosov. See \cite[Corollary 13.3]{BensonMargalit}. Now we mention three properties of the canonical reduction systems that will be used later.
\begin{prop}
$\text{\normalfont CRS}(h^n)$=$\text{\normalfont CRS}(h)$ for any $n$.
\end{prop}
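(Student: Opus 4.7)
The plan is to reduce to the Birman--Lubotzky--McCarthy characterization of the canonical reduction system, which is equivalent to the intersection-of-maximal-reduction-systems definition used in the text. That characterization states that a nonperipheral simple closed curve $c$ lies in $\text{CRS}(h)$ if and only if (a) some nonzero power of $h$ preserves the isotopy class of $c$, and (b) for every simple closed curve $d$ with geometric intersection $i(c,d)>0$, no nonzero power of $h$ preserves the isotopy class of $d$.

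Once this is in hand, the proposition becomes essentially immediate. Both (a) and (b) depend only on which isotopy classes of simple closed curves are preserved by some nonzero power of $h$, and a curve is preserved by some nonzero power of $h$ if and only if it is preserved by some nonzero power of $h^n$: any power of $h^n$ is a power of $h$, and conversely if $h^k(c)=c$ then $(h^n)^k = h^{nk}$ also fixes $c$. Applying this equivalence to both conditions shows that $\text{CRS}(h)$ and $\text{CRS}(h^n)$ are cut out by identical conditions on individual curves, hence coincide as sets.

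The only nontrivial step is the BLM characterization itself, which is a standard result available, e.g., in Farb--Margalit Chapter 13 (a reference the paper is already invoking for Thurston-theoretic background). A direct argument from the intersection definition in the text would be more awkward, since maximal reduction systems for $h$ and for $h^n$ can genuinely differ: for instance, if $h$ cyclically permutes a disjoint chain of curves $c_1,\ldots,c_n$, the whole chain is a reduction system for $h$ while each singleton $\{c_i\}$ is already a reduction system for $h^n$. The BLM reformulation sidesteps this by replacing the extrinsic ``maximal reduction system'' description with an intrinsic dynamical condition on single curves, which is manifestly invariant under $h \mapsto h^n$; this is where I would expect the main (really, the only) work to lie.
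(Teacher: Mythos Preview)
Your proposal is correct and matches the paper's approach: the paper simply declares the result classical and cites \cite[Chapter 13]{BensonMargalit}, which is exactly the Farb--Margalit reference you invoke for the BLM characterization. Your write-up supplies more detail than the paper does, including the useful observation that maximal reduction systems for $h$ and $h^n$ can differ, which justifies passing through the intrinsic BLM criterion rather than arguing directly from the intersection definition.
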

\begin{proof}This is classical; see  \cite[Chapter 13]{BensonMargalit}.
\end{proof}
For a curve $a$ on a surface $S$, denote by $T_a$ the Dehn twist about $a$. For two curves $a,b$ on a surface $S$, let $i(a,b)$ be the geometric intersection number of $a$ and $b$. For two sets of curves $P$ and $T$, we say that $S$ and $T$ \emph{intersect} if there exist $a\in P$ and $b\in T$ such that $i(a,b)\neq 0$. Notice that two sets of curves intersecting does not mean that they have a common element.
\begin{prop}
Let $h$ be a reducible mapping class in $\text{\normalfont Mod}(S)$. If $\{\gamma\}$ and $\text{\normalfont CRS}(h)$ intersect, then no power of $h$ fixes $\gamma$.
\label{noin}
\end{prop}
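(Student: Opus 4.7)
The plan is to argue by contradiction, exploiting the two key features of canonical reduction systems: that $\text{CRS}(h^n) = \text{CRS}(h)$ (the previous proposition) and that every canonical reduction system is contained in every maximal reduction system, which consists by definition of pairwise disjoint curves.

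Suppose, toward a contradiction, that some power $h^n$ fixes the isotopy class of $\gamma$. Since $\gamma$ has nonzero geometric intersection with some curve in $\text{CRS}(h)$, it is in particular essential and nonperipheral, so the singleton $\{\gamma\}$ is itself a reduction system for $h^n$. I would then extend $\{\gamma\}$ to a maximal reduction system $M$ of $h^n$; this is possible because reduction systems form a poset under inclusion with an obvious upper bound given by maximizing among disjoint essential nonperipheral curves preserved setwise.

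By the definition of the canonical reduction system, $\text{CRS}(h^n) \subseteq M$, and by the preceding proposition $\text{CRS}(h^n) = \text{CRS}(h)$. Therefore both $\gamma$ and every element of $\text{CRS}(h)$ lie in the single reduction system $M$, whose elements are pairwise disjoint. In particular, $i(\gamma, c) = 0$ for every $c \in \text{CRS}(h)$, which contradicts the hypothesis that $\{\gamma\}$ and $\text{CRS}(h)$ intersect.

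There is really only one subtle point, namely verifying that $\{\gamma\}$ actually is a reduction system of $h^n$ (equivalently, that $\gamma$ is nonperipheral); but this follows immediately from the assumption that $\gamma$ has positive geometric intersection with some element of $\text{CRS}(h)$, since peripheral curves can be isotoped to have empty intersection with any collection of curves in the interior of $S$. No further structural input from the Nielsen--Thurston classification is needed beyond what the preceding proposition already supplies.
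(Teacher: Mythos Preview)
Your proof is correct and follows essentially the same line as the paper's own argument: assume $h^n$ fixes $\gamma$, extend $\{\gamma\}$ to a maximal reduction system $M$ (for $h^n$), and obtain a contradiction since $\text{CRS}(h)=\text{CRS}(h^n)\subset M$ must be disjoint from $\gamma$. If anything, you are more explicit than the paper in invoking the identity $\text{CRS}(h^n)=\text{CRS}(h)$ and in noting why $\gamma$ is nonperipheral.
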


\begin{proof}
Suppose that $h^n$ fixes $\gamma$. Therefore $\gamma$ belongs to a maximal reduction system $M$. By definition, $\text{CRS}(h)\subset M$. However $\gamma$ intersects some curve in CRS$(f)$; this contradicts the fact that $M$ is a set of disjoint curves. 
\end{proof}
\begin{prop}
Suppose that $h,f\in \text{\normalfont Mod}(S)$ and $fh=hf$. Then $\text{\normalfont CRS}(h)$ and $\text{\normalfont CRS}(f)$ do not intersect.
\label{CRS(x,y)}
\end{prop}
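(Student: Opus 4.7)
The plan is to exploit the naturality (equivariance) of the canonical reduction system under conjugation: for any $g \in \text{Mod}(S)$, one has $\text{CRS}(ghg^{-1}) = g \cdot \text{CRS}(h)$. This is standard and follows directly from the definition, since a maximal reduction system for $ghg^{-1}$ is precisely the $g$-image of a maximal reduction system for $h$, and intersection commutes with the $g$-action.

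Now I would argue by contradiction. Suppose $\text{CRS}(h)$ and $\text{CRS}(f)$ intersect, so there exist $a \in \text{CRS}(h)$ and $b \in \text{CRS}(f)$ with $i(a,b) \neq 0$. From $fh = hf$ we get $f h f^{-1} = h$, hence
\[
\text{CRS}(h) \;=\; \text{CRS}(fhf^{-1}) \;=\; f \cdot \text{CRS}(h).
\]
Thus $f$ permutes the finite set $\text{CRS}(h)$, so some power $f^N$ fixes each curve of $\text{CRS}(h)$ individually; in particular $f^N$ fixes the isotopy class $a$.

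On the other hand, since $a$ intersects the curve $b \in \text{CRS}(f)$, the singleton $\{a\}$ intersects $\text{CRS}(f)$. Applying Proposition \ref{noin} to the reducible mapping class $f$ and the curve $\gamma = a$ tells us that no power of $f$ can fix $a$. This contradicts the existence of $N$ above, completing the proof.

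The only nontrivial ingredient is the equivariance $\text{CRS}(ghg^{-1}) = g\cdot\text{CRS}(h)$, which is standard but should probably be cited (e.g.\ from \cite[Chapter 13]{BensonMargalit}); everything else is a one-line application of Proposition \ref{noin}. I do not anticipate any real obstacle, though one should be slightly careful that $f$ (and $h$) are genuinely reducible: if either is periodic or pseudo-Anosov then its canonical reduction system is empty and the conclusion is vacuous, so the argument above only needs to treat the reducible case, which is exactly the hypothesis under which Proposition \ref{noin} was stated.
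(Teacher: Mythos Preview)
Your proof is correct and is essentially identical to the paper's own argument: both use the equivariance $\text{CRS}(ghg^{-1})=g\cdot\text{CRS}(h)$ together with commutativity to conclude that one of the two mapping classes permutes (hence some power fixes) the other's canonical reduction system, and then invoke Proposition~\ref{noin}. The only cosmetic difference is that the paper swaps the roles of $h$ and $f$ and phrases the conclusion directly rather than by contradiction.
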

\begin{proof}
By conjugation, we have that CRS($hfh^{-1})=h(\text{CRS}(f))$. Since $hfh^{-1}=f$, we get that CRS$(f)=h(\text{CRS}(f))$. Therefore $h$ fixes the whole set CRS$(f)$. A power of $h$ fixes all curves in CRS$(f)$. By Proposition \ref{noin}, curves in CRS$(h)$ do not intersect curves in CRS$(f)$.
\end{proof}

Now, we introduce a remarkable relation for $\text{\normalfont Mod}(S)$ that will be used in the proof.
\begin{prop}[\bf\boldmath The lantern relation]
There is an orientation-preserving embedding of $S_{0,4}\subset S$ and let $x,y,z,b_1,b_2,b_3,b_4$ be simple closed curves in $S_{0,4}$ that are arranged as the curves shown in the following figure. 

\begin{figure}[H]
\centering
\begin{tikzpicture}
\draw[\discColor, thick] (0,0) circle [radius = 1*\discSizeX];
\draw[green, thick] (0,0) circle [radius = 0.9*\discSizeX];
\node at (50:1.15*\discSizeX) {$b_4$};
\draw[green, thick] (90:0.53*\discSizeX) circle [radius = 0.15*\discSizeX];
\draw[\discColor, thick] (90:0.53*\discSizeX) circle [radius = 0.1*\discSizeX];
\node at (90:0.25*\discSizeX) {$b_1$};
\draw[green, thick] (210:0.53*\discSizeX) circle [radius = 0.15*\discSizeX];
\draw[black, thick] (210:0.53*\discSizeX) circle [radius = 0.1*\discSizeX];
\node at (210:0.25*\discSizeX) {$b_2$};
\draw[green, thick] (-30:0.53*\discSizeX) circle [radius = 0.15*\discSizeX];
\draw[black, thick] (-30:0.53*\discSizeX) circle [radius = 0.1*\discSizeX];
\node at (-30:0.25*\discSizeX) {$b_3$};
\draw[\curveColor, ultra thick] (150:0.25*\discSizeX) circle [x radius = 0.75*\discSizeX, y radius = 0.3*\discSizeX, , rotate=60];
\node[color=\curveColor] at (150:0.7*\discSizeX) {$x$};
\draw[\curveColor, ultra thick] (30:0.25*\discSizeX) circle [x radius = 0.75*\discSizeX, y radius = 0.3*\discSizeX, , rotate=-60];
\node[color=\curveColor] at (30:0.7*\discSizeX) {$z$};
\draw[\curveColor, ultra thick] (-90:0.25*\discSizeX) circle [x radius = 0.75*\discSizeX, y radius = 0.3*\discSizeX];
\node[color=\curveColor] at (-90:0.7*\discSizeX) {$y$};
\end{tikzpicture}
\end{figure}
In {\normalfont Mod}$(S)$ we have the relation
\[T_xT_yT_z=T_{b_1}T_{b_2}T_{b_3}T_{b_4}.\]
\end{prop}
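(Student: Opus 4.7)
The plan is to reduce the statement to an identity inside $\text{Mod}(S_{0,4})$, since both sides are supported in the embedded subsurface; extending by the identity off of $S_{0,4}$ then yields the corresponding equality in $\text{Mod}(S)$. Within $\text{Mod}(S_{0,4})$ I would apply the Alexander method: two orientation-preserving self-homeomorphisms of $S_{0,4}$ fixing the boundary pointwise are isotopic rel boundary if and only if they induce the same action on the rel-endpoint isotopy classes of a filling collection of disjoint properly embedded arcs.

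First I would fix three disjoint properly embedded arcs $\alpha_1,\alpha_2,\alpha_3$ whose union cuts $S_{0,4}$ into a disk, with each $\alpha_i$ chosen to meet $x\cup y\cup z$ minimally. Because cutting along these arcs produces a disk, the ordered triple of rel-endpoint isotopy classes $([\alpha_1],[\alpha_2],[\alpha_3])$ determines any boundary-fixing mapping class; hence it suffices to compare the action of the two sides on this triple.

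Second, I would compute the image of each $\alpha_i$ under both sides explicitly. The right-hand side is easy to visualize: the $T_{b_k}$ have pairwise disjoint support in collars of the $b_k$, so each arc simply picks up a local boundary twist at each of its endpoints. The left-hand side requires chasing the arcs through three successive twists $T_z$, $T_y$, $T_x$, rerouting the arc at each intersection with the twist curve. To reduce the workload I would exploit the visible three-fold cyclic symmetry of the lantern, which simultaneously permutes $\{x,y,z\}$ and $\{b_1,b_2,b_3\}$ (fixing $b_4$); then verifying the action on a single arc suffices, with the other two cases following by relabeling.

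The main obstacle will be the careful bookkeeping of intersections during the arc-chasing: intermediate arcs accumulate intersections with the yet-untwisted curves, and one must verify that the final image coincides with the right-hand side as an isotopy class \emph{rel endpoints}, not merely in homology. Indeed, the abelianization of the boundary-fixing mapping class group detects only the total twisting near the boundaries, which is the essential content of the relation and cannot be checked by homology alone. Since this is a classical identity going back to Dehn, in the actual write-up one may simply invoke the standard reference \cite[Chapter 5]{BensonMargalit} instead of carrying out the arc-chasing in full.
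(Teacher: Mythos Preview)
Your proposal is correct: the Alexander method on a filling system of arcs in $S_{0,4}$ is precisely the standard verification of the lantern relation, and you rightly note that one may simply cite \cite[Chapter~5]{BensonMargalit}. The paper itself does exactly that---its entire proof is ``This is classical; see \cite[Chapter 5.1]{BensonMargalit}''---so your outline is in fact more detailed than what the paper provides.
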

\begin{proof}
This is classical; see \cite[Chapter 5.1]{BensonMargalit}.
\end{proof}

\section{An algebraic result and how it implies (1) of Theorem \ref{main}}
In this section we give an algebraic result about the braid groups and prove how it implies (1) of Theorem \ref{main}. PConf$_n(\mathbb{R}^2)$ and PConf$_{n+1}(\mathbb{R}^2)$ are both $K(\pi,1)$ spaces. This can be seen by induction on $n$ and taking the long exact sequence of homotopy groups of the fiber bundle $f_n(\mathbb{R}^2)$. Therefore, the homotopy classes of sections of $f_n(\mathbb{R}^2)$ only depend on the homomorphisms of the fundamental groups. Let $PB_n=\pi_1(\text{PConf}_n(\mathbb{R}^2))$ and let $F_n$ be a free group of $n$ generators. The fundamental groups of the fiber bundle $f_n(S)$ gives us the following short exact sequence, i.e. the Fadell-Neuwirth short exact sequence:

\begin{equation}
1\to F_n\to PB_{n+1}\xrightarrow{f_{n}(\mathbb{R}^2)_*} PB_n\to 1.
\label{exact}
\end{equation}

Let $D_n$ be the disk with $n$ punctures $\{x_1,...,x_n\}$ and $D_{n+1}$ be the disk with $n+1$ punctures $\{x_0, x_1,...,x_n\}$  and the forget map forgets the point $x_0$. We view $PB_n$ and $PB_{n+1}$ as mapping class groups as the following:
\[
PB_n=\text{PMod}(D_n)\text{ and }PB_{n+1}=\text{PMod}(D_{n+1}).
\]
A simple closed curve $a$ on $D_n$ separates $D_n$ into two parts: the \emph{outside of $a$}, i.e. the component containing the boundary of $D_n$ and the \emph{inside of $a$}, i.e. the one not containing the boundary of $D_n$. We say that $a$ \emph{surrounds} $x_k$ if $x_k\in$ the inside of $a$. The following algebraic result on the splittings of the exact sequence (\ref{exact}) is a key ingredient in the proof of Theorem \ref{main}. 

\begin{thm}
Suppose that we have a section $s: PB_n\to PB_{n+1}$. Then the image $s(PB_n)$ either preserves a simple closed curve $c$ surrounding points $\{x_1,...,x_n\}$, or preserves a simple closed curve $c$ surrounding $\{x_i,x_0\}$ for some $i\in \{1,2,...,n\}$. 
\label{PB}
\end{thm}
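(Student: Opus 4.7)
The plan is to study, for each standard Dehn-twist generator of $PB_n$, the canonical reduction system (CRS) of its image under $s$ in $PB_{n+1}$, and to combine the many commuting relations among these generators with the lantern relation of Section 2.2 to force the existence of a simple closed curve $c$ in $D_{n+1}$, of one of the two stated types, that is preserved by all of $s(PB_n)$.

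After identifying $PB_n = \text{PMod}(D_n)$ and $PB_{n+1} = \text{PMod}(D_{n+1})$, let $c_{ij} \subset D_n$ denote a small curve surrounding exactly $\{x_i, x_j\}$ and set $\sigma_{ij} := s(T_{c_{ij}}) \in PB_{n+1}$. The first step is to show each $\sigma_{ij}$ is reducible. Since $n > 3$, for every pair $(i,j)$ there are two disjoint pairs $(k,l)$ and $(k',l')$ disjoint from $(i,j)$, and $T_{c_{ij}}$ commutes with both $T_{c_{kl}}$ and $T_{c_{k'l'}}$; hence $\sigma_{ij}$ commutes with both $\sigma_{kl}$ and $\sigma_{k'l'}$, forcing its centralizer in $\text{Mod}(D_{n+1})$ to contain a rank-$2$ abelian subgroup. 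Since centralizers of pseudo-Anosov classes are virtually cyclic, $\sigma_{ij}$ is not pseudo-Anosov; and since $s$ is injective (with left inverse $f_{n*}$) while $T_{c_{ij}}$ has infinite order, $\sigma_{ij}$ is not periodic. Therefore $\sigma_{ij}$ is reducible and $\text{CRS}(\sigma_{ij}) \neq \emptyset$.

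Next I would enumerate the possible isotopy types of curves $\gamma \in \text{CRS}(\sigma_{ij}) \subset D_{n+1}$. If $\sigma_{ij}^N$ fixes $\gamma$, then $T_{c_{ij}}^N$ fixes the image $f_{n*}(\gamma)$ in $D_n$, so $f_{n*}(\gamma)$ is isotopic to $c_{ij}$ or disjoint from $c_{ij}$. The resulting list of lifts to $D_{n+1}$ falls into three families: (a) curves surrounding $\{x_i,x_j\}$ or $\{x_0,x_i,x_j\}$; (b) curves surrounding $\{x_0,x_i\}$ or $\{x_0,x_j\}$; (c) curves contained in the outside of $c_{ij}$ (possibly enclosing $x_0$). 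Then Proposition \ref{CRS(x,y)} combined with $[T_{c_{ij}}, T_{c_{kl}}] = 1$ for disjoint $(i,j), (k,l)$ implies that $\text{CRS}(\sigma_{ij})$ and $\text{CRS}(\sigma_{kl})$ do not intersect, yielding strong compatibility constraints across pairs.

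The hardest step, and the main technical obstacle, is using the lantern relation to extract a single curve preserved by every $\sigma_{ij}$. For three indices $i, j, k$, Dehn twists about single-puncture curves being trivial collapses the lantern to $T_{c_{ij}} T_{c_{jk}} T_{c_{ik}} = T_{c_{ijk}}$ in $\text{PMod}(D_n)$, where $c_{ijk}$ surrounds $\{x_i,x_j,x_k\}$; applying $s$ gives $\sigma_{ij}\sigma_{jk}\sigma_{ik} = s(T_{c_{ijk}})$ in $PB_{n+1}$. Since $c_{ij}, c_{jk}, c_{ik}$ pairwise intersect, Proposition \ref{noin} forbids certain curves from lying in the various CRSs simultaneously. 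A careful case analysis through families (a)--(c), applied across all pairs and all triples, should show that the only globally consistent configurations are: either every $\sigma_{ij}$ preserves a curve surrounding $\{x_1, \ldots, x_n\}$ and disjoint from $x_0$, or there exists a single index $\ell$ such that every $\sigma_{ij}$ preserves a curve surrounding $\{x_0, x_\ell\}$. As the $\sigma_{ij}$ generate $s(PB_n)$, the theorem follows.
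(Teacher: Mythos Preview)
Your overall strategy matches the paper's: establish that each $s(T_{c_{ij}})$ is reducible, constrain its CRS, and use lantern relations and commuting relations to pin down a common invariant curve. But two technical ingredients that the paper relies on are missing from your outline, and without them the case analysis cannot be carried out.

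First, your enumeration of CRS curves is too permissive. From ``$T_{c_{ij}}^N$ fixes $f_{n*}(\gamma)$'' you only conclude that $f_{n*}(\gamma)$ is isotopic to $c_{ij}$ or disjoint from it, leaving your family (c) of arbitrary curves outside $c_{ij}$. The paper eliminates this family: for any nontrivial curve $b$ disjoint from $a$ with $b\neq a$, one can find $c$ disjoint from $a$ with $i(b,c)\neq 0$, and since $s(T_c)$ commutes with $s(T_a)$ it must permute $\text{CRS}(s(T_a))$, contradicting $b'\in\text{CRS}(s(T_a))$. This forces every curve in $\text{CRS}(s(T_a))$ to become either $a$ or peripheral after forgetting $x_0$, so the CRS has at most two curves and falls into exactly three explicit shapes (Lemma~\ref{lift}).

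Second, and more seriously, you never upgrade ``reducible'' to ``multitwist''. The paper shows (again Lemma~\ref{lift}) that on each complementary component of its CRS, $s(T_a)$ restricts to the identity, so $s(T_a)$ is literally a product of Dehn twists along one or two curves. This is what makes the subsequent argument work: the paper's key tool is Proposition~\ref{2int}, which says $T_{a'}T_{b'}$ is a multitwist iff $i(a',b')=2$. Feeding the lantern relation $T_aT_bT_c=T_d$ through $s$ and knowing the right-hand side is a multitwist forces $i(a',b')=2$ in $D_{n+1}$, and this numerical control over intersections is what drives the four-case analysis in Step~3. Your proposal invokes the lantern relation and Proposition~\ref{noin}, but without the multitwist conclusion and the $i=2$ criterion you have no mechanism to compute how the lifted curves sit relative to one another, and the phrase ``a careful case analysis \ldots\ should show'' is covering a genuine gap rather than a routine verification.
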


The rest of this subsection focuses on how Theorem \ref{PB} implies part (1) of Theorem \ref{main}. Let $c$ be a curve inside $D_{n+1}$ surrounding $k$ points. Let $D_k^l$ be a disk with $k$ punctures and $l$ open disks removed. We call the boundary of the $l$ disks the \emph{small boundary components} and the original boundary of $D$ \emph{the big boundary component}. See the following figure for a geometric explanation.
\begin{figure}[H]
\centering
\includegraphics[scale=0.25]{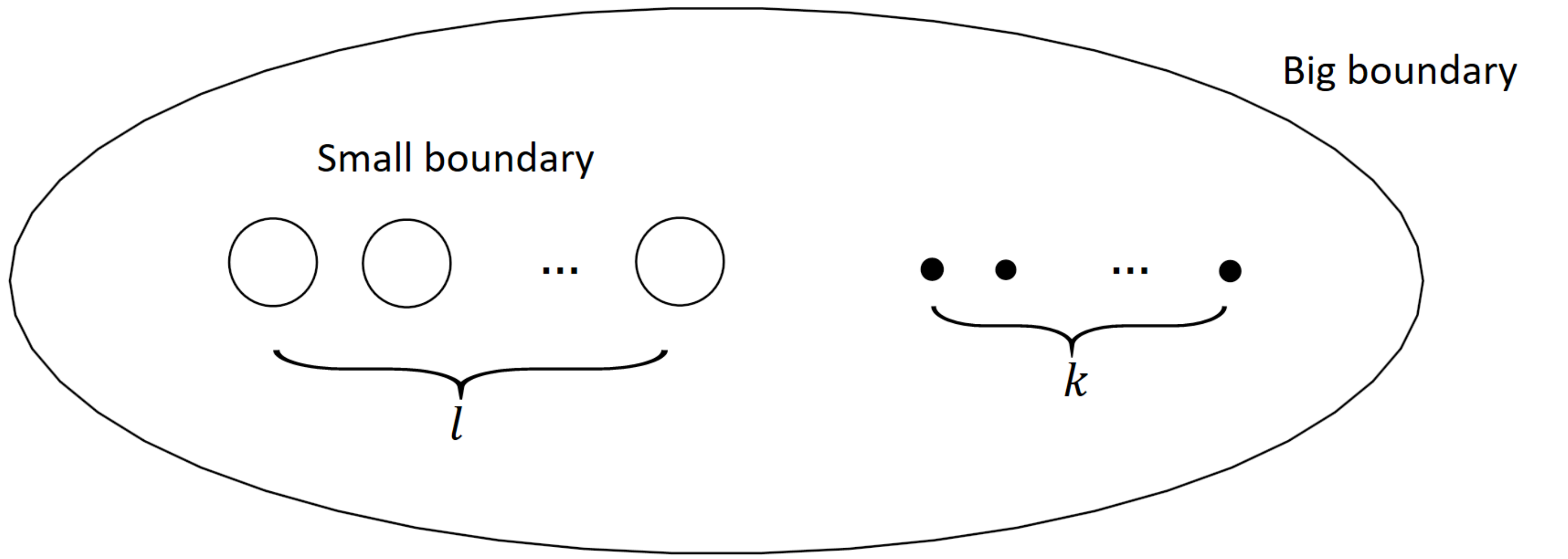}
\caption{$D_k^l$ where small boundaries are the $l$ small circles and big boundary is the outside circle.}
\end{figure}
Let 
\[
PB_{k,l}:=\text{PMod}(D_k^l)
\]
 be the pure mapping class group of $D_k^l$. The difference between punctures and boundary components is that the Dehn twist about a puncture is trivial but the Dehn twist about a boundary component is nontrivial. The following proposition describes the centralizer of $T_c$. Denote the centralizer of $T_c$ by $C_{PB_{n+1}}(T_c)$.  
\begin{prop}[\bf\boldmath Centralizer of $T_c$]
\label{center}
$C_{PB_{n+1}}(T_c)$ satisfies the following exact sequence
\[
1\to \mathbb{Z}\xrightarrow{(T_c,T_c^{-1})} PB_k\times PB_{n+1-k,1} \to C_{PB_{n+1}}(T_c)\to 1\]
where $k$ is the number of points that $c$ surround.
\end{prop}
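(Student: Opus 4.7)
The plan is to realize $C_{PB_{n+1}}(T_c)$ as the stabilizer of the isotopy class $[c]$ in $PB_{n+1}$, and then to apply the standard cutting-and-gluing exact sequence for the stabilizer of a separating simple closed curve. The first observation is that $C_{PB_{n+1}}(T_c)$ coincides with $\mathrm{Stab}_{PB_{n+1}}([c])$: the conjugation formula $fT_cf^{-1}=T_{f(c)}$ gives one inclusion, while the fact that Dehn twists uniquely determine their defining curves up to isotopy gives the other. Moreover, because every element of $PB_{n+1}$ fixes the punctures pointwise and the two sides of $c$ carry distinct labeled punctures, no element of this stabilizer can swap the two sides of $c$.

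Next I would construct a gluing homomorphism $\Phi : PB_k \times PB_{n+1-k,1} \to C_{PB_{n+1}}(T_c)$. Cutting $D_{n+1}$ along $c$ decomposes it as $D_k \cup_c D_{n+1-k}^1$, where the $k$ punctures inside $c$ lie in $D_k$ (with $c$ as its boundary) and the remaining $n+1-k$ punctures lie in $D_{n+1-k}^1$ (with $c$ as its small boundary). A pair of pure mapping classes on the two sides, represented by diffeomorphisms fixing a collar of $c$ pointwise, glues along $c$ to a well-defined pure mapping class on $D_{n+1}$ that preserves $[c]$; this defines $\Phi$. Surjectivity then follows from the first step: each element of the stabilizer admits a representative preserving each side and fixing a collar of $c$ pointwise, and restricting to the two sides produces a preimage.

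For the kernel, the element $(T_c, T_c^{-1})$ lies in $\ker \Phi$ because extending $T_c$ by the identity from either side of $c$ produces the same mapping class $T_c \in PB_{n+1}$, so $\Phi(T_c, T_c^{-1}) = T_c \cdot T_c^{-1} = \mathrm{id}$. That this element generates the entire kernel is the content of the classical cutting-along-a-curve exact sequence for mapping class groups, see \cite[Proposition 3.20]{BensonMargalit}; the key ingredient is the Alexander lemma for the disk, which forces any ambiguity in the gluing to be concentrated in an annular collar of $c$. The main subtlety is that $c$ must be treated as a genuine boundary circle (not a puncture) on the outer side --- this is precisely why the right factor is $PB_{n+1-k,1}$, in which $T_c$ is nontrivial, rather than $PB_{n+1-k}$, in which a Dehn twist about a puncture would vanish and the kernel description would fail.
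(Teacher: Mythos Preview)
Your proposal is correct and follows essentially the same route as the paper: identify $C_{PB_{n+1}}(T_c)$ with the stabilizer of $[c]$, observe that the two complementary regions cannot be exchanged, and invoke the standard cutting-along-a-curve exact sequence. The paper's proof is a terse two-line sketch (``This is classical\dots''), whereas you have supplied the details it leaves implicit; your reason for non-swapping (distinct labeled punctures in a pure mapping class group) is slightly different from the paper's (the two sides are non-homeomorphic), but both are valid.
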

\begin{proof}
This is classical. The centralizer of $T_c$ is the subgroup of Mod$(D_n)$ that fixes $c$. The curve $c$ separates $D_n$ into two components: $C_1$ that contains the boundary and $C_2$ that does not contain the boundary. Since $C_1$ and $C_2$ are not homeomorphic, we have that $C_{PB_{n+1}}(T_c)$ only contains elements that preserve $C_1$ and $C_2$. Therefore, our statement holds.
\end{proof}

Now we are ready to prove (1) of Theorem \ref{main}.

\begin{proof}[\bf\boldmath Proof of (1) of Theorem \ref{main} assuming Theorem \ref{PB}] 
Let $g:\text{PConf}_n(\mathbb{R}^2)\to \text{PConf}_{n+1}(\mathbb{R}^2)$ be a section of the fiber bundle $f_n(\mathbb{R}^2)$. Let $s=g_*: PB_n\to PB_{n+1}$ be the induced map on the fundamental groups of $g$. By Theorem \ref{PB}, the image $s(PB_n)$ preserves a curve $c$ that either surrounds 2 points or $n$ points. Therefore, $s(PB_n)$ is in the centralizer of $T_c$ in $PB_{n+1}$ by the fact that $fT_cf^{-1}=T_{f(c)}$.\\
\\
\noindent
{\bf\boldmath Case 1: when $c$ surrounds $\{x_0,x_k\}$.} $PB_2\cong\mathbb{Z}$, which is generated by the Dehn twist about the boundary component. From Proposition \ref{center} we have
\[
1\to \mathbb{Z}\xrightarrow{(T_c,T_c^{-1})} \mathbb{Z}\times PB_{n-1,1}\to C_{PB_{n+1}}(T_c)\to 1.
\]
Therefore $C_{PB_{n+1}}(T_c)\cong PB_{n-1,1}$. The inclusion $PB_{n-1,1}\hookrightarrow PB_{n+1}$ is induced by gluing a 2-punctured disk inside the small boundary of $D_{n-1}^1$.

On the other hand, we have that
\[
\pi_1(\text{PConf}_{n,k}(\mathbb{R}^2))=PB_{n-1,1}.
\]
The fundamental groups of the fiber bundle (\ref{vector}) is the following exact sequence:
\begin{equation}
1\to \mathbb{Z}\xrightarrow{T_d} PB_{n-1,1}\to PB_n\to 1.
\label{exact2}
\end{equation}
Here $T_d$ is the Dehn twist about the small boundary component. The embedding $em_{n,k}$: PConf$_{n,k}(\mathbb{R}^2)\hookrightarrow \text{PConf}_{n+1}(\mathbb{R}^2)$ induces a homomorphism on the fundamental group $em_{n,k*}:PB_{n-1,1}\to PB_{n+1}$. On the mapping class group level, since $T_d$ in $PB_{n-1,1}$ is mapped to the Dehn twist about a curve surrounding $\{x_0,x_k\}$, we know that $em_{n,k*}$ is also induced by gluing a 2-punctured disk inside the small boundary of $D_{n-1}^1$. The theorem holds.\\
\\
{\bf\boldmath Case 2: when $c$ surrounds $\{x_1,...,x_n\}$.} Since $PB_{1,1}\cong\mathbb{Z}\times \mathbb{Z}$, which is generated by the Dehn twists about the two boundaries, we have the following exact sequence:
\[
1\to \mathbb{Z}\xrightarrow{(0,T_c,T_c^{-1})}\mathbb{Z}\times \mathbb{Z}\times PB_n\to C_{PB_{n+1}}(T_c)\to 1.
\]
On the mapping class group level, $PB_n\times PB_{1,1}\to C_{PB_{n+1}}(T_c)\to PB_{n+1}$ is induced by gluing $D_1^1$ outside the big boundary component of $D_n$. Therefore $\mathbb{Z}\times PB_n\cong C_{PB_{n+1}}(T_c)$ and the generator of $\mathbb{Z}$ is mapped to $T_cT_b^{-1}$ where $b$ is the big boundary of $D_{n+1}$.

On the other hand, we have that
\[
\pi_1(\text{PConf}_{n,\infty}(\mathbb{R}^2))=\mathbb{Z}\times PB_n.
\]
The embedding $em_{n,\infty}:$ PConf$_{n,\infty}(\mathbb{R}^2)\hookrightarrow \text{PConf}_{n+1}(\mathbb{R}^2)$ induces $em_{n, \infty*}:\mathbb{Z}\times PB_n\to PB_{n+1}$ on the fundamental groups. On the level of mapping class groups, since $\mathbb{Z}$ maps to $T_cT_b^{-1}$, we know that $em_{n,\infty*}$ is induced by the embedding of $D_n$ in $D_{n+1}$ and maps the generator of $\mathbb{Z}$ to $T_cT_b^{-1}$. Therefore, $em_{n,\infty*}$ is induced by gluing $D_1^1$ outside the big boundary component of $D_n$ as well. Our theorem holds.
\end{proof}
\begin{rmk}
The classification of the sections of the fiber bundle $f_n(S)$ is not entirely the same as the classification of the splittings of the exact sequence (\ref{exact}). There is an subtlety coming from the choice of base point in the fundamental groups. Therefore, we classify the splittings of the exact sequence (\ref{exact}) up to conjugacy. In Theorem \ref{PB}, all the choices of $c$ is coming from a conjugacy by an element $F_n$; thus they decide the same sections. 
\end{rmk}

\section{The proof of Theorem \ref{PB}}
Throughout the section we prove Theorem \ref{PB}, which implies Theorem \ref{main}(1). The strategy of the proof is the following. We assume that there exists a section $s:PB_n\to PB_{n+1}$, i.e. $f_{n}(\mathbb{R}^2)_*\circ s=id$. The strategy is that we first determine $s(T_a)$ for any simple closed curve $a$ on $D_n$. We first prove that the lift $s(T_a)$ is always a multi-twist about at most two curves on $D_{n+1}$; these two curves or one curve are either trivial or isotopic to $a$ after forgetting the point $x_0$. This is done by using a result of McCarthy on centralizer of pseudo-Anosov element and lantern relation. We find a generating set of $PB_n$ consisting of Dehn twists about curves bounding two points. We then argue depending on whether $s(T_a)$ is a multi-twist on two curves or a single twist. The main tool of this part is Proposition \ref{2int}, characterizing lantern relation that we deduce from Thurston's construction.

\subsection{Step 1: constrain the image of $s(T_c)$ for a simple closed curve $c$}
 The following proposition characterizes intersection number 2 of two curves and will be used many times in the proof.
\begin{prop}
Let $i(a,b)\neq 0$. Then $T_aT_b$ is a multitwist if and only if $i(a,b)=2$.
\label{2int}
\end{prop}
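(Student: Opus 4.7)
The plan is to split into three cases based on $k:=i(a,b)$ and analyze $T_aT_b$ on the subsurface $N:=N(a\cup b)$ filled by $a\cup b$. Since the graph $a\cup b$ has $k$ vertices and $2k$ edges, $\chi(N)=-k$. Consequently $N$ is a one-holed torus for $k=1$, a four-holed sphere for $k=2$, and a surface with $\chi(N)\leq-3$ for $k\geq 3$. I will prove each direction by a separate mechanism matched to the topology of $N$.

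For the ``if'' direction, suppose $k=2$. On $N\cong S_{0,4}$ there are exactly three isotopy classes of essential simple closed curves, corresponding to the three ways of pairing the four boundary components; two of them are $a$ and $b$, and I call the third $c$. The lantern relation stated in Section~2 gives $T_aT_bT_c=T_{B_1}T_{B_2}T_{B_3}T_{B_4}$, where $B_1,\dots,B_4$ are the boundary components of $N$, and hence
\[
T_aT_b\;=\;T_c^{-1}T_{B_1}T_{B_2}T_{B_3}T_{B_4}
\]
is a product of Dehn twists about the five pairwise disjoint curves $c,B_1,B_2,B_3,B_4$. So $T_aT_b$ is a multitwist.

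For the ``only if'' direction, suppose $k=1$ or $k\geq 3$. I would invoke Thurston's construction on the filling pair $(a,b)\subset N$, which provides a representation $\rho\colon\langle T_a,T_b\rangle\to PSL_2(\mathbb{R})$ sending $T_a\mapsto\bigl(\begin{smallmatrix}1&k\\0&1\end{smallmatrix}\bigr)$ and $T_b\mapsto\bigl(\begin{smallmatrix}1&0\\-k&1\end{smallmatrix}\bigr)$, with kernel contained in the center of $\mathrm{Mod}(N)$ (generated by boundary Dehn twists). The computation $\mathrm{tr}\,\rho(T_aT_b)=2-k^2$ gives an elliptic element of order $6$ when $k=1$ and a hyperbolic element ($|\mathrm{tr}|\geq 7$) when $k\geq 3$. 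Accordingly $T_aT_b$ descends to a periodic element of order $6$ or a pseudo-Anosov element in $\mathrm{Mod}(N_\circ)$, where $N_\circ$ denotes $N$ with its boundary components capped off by punctures. But any nontrivial multitwist in $\mathrm{Mod}(N_\circ)$ has infinite order and is Nielsen-Thurston reducible, which contradicts both cases.

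The main obstacle I expect is the passage between Thurston's $PSL_2(\mathbb{R})$-picture and the actual Nielsen-Thurston type of $T_aT_b$ in $\mathrm{Mod}(N)$: Thurston's representation is only faithful modulo boundary twists. I would handle this by invoking Thurston's classical theorem on the construction together with the observation that multitwists are preserved by the quotient $\mathrm{Mod}(N)\to\mathrm{Mod}(N_\circ)$, so a hypothetical multitwist expression for $T_aT_b$ in $\mathrm{Mod}(N)$ would descend to a multitwist in $\mathrm{Mod}(N_\circ)$, which is incompatible with being periodic or pseudo-Anosov.
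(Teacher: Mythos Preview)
Your approach is essentially identical to the paper's: both use Thurston's construction to compute $\mathrm{tr}\,\rho(T_aT_b)=2-k^2$, conclude that $T_aT_b$ is reducible on $N$ iff $k=2$, and then invoke the lantern relation to exhibit the multitwist when $k=2$. Your write-up is a bit more explicit than the paper's about the elliptic case $k=1$ and about the passage $\mathrm{Mod}(N)\to\mathrm{Mod}(N_\circ)$, which is a useful clarification.

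One point to flag: your assertion that $N\cong S_{0,4}$ whenever $k=2$ is not correct in general. Knowing $\chi(N)=-2$ allows both $S_{0,4}$ and $S_{1,2}$; the latter occurs exactly when the algebraic intersection $\hat\imath(a,b)=\pm 2$ (for instance, the curves $(1,0)$ and $(1,2)$ on a torus handle), and the lantern relation does not apply on $S_{1,2}$. The paper's proof makes the same implicit assumption, and it is harmless in the paper's applications since the ambient surface there is a punctured disk, where every simple closed curve is separating and hence $\hat\imath(a,b)=0$ automatically. If you want the statement on an arbitrary surface you would need a separate argument for the $S_{1,2}$ case (e.g.\ via the two-holed-torus chain relation, or by showing directly that the parabolic fixed foliation in Thurston's construction has closed leaves, so that the corresponding element is a multitwist).
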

\begin{proof}
This result was previously obtained by Margalit \cite{MR1943337} and Hamidi-Tehrani \cite{MR1943336}. We give a different proof using Thurston's construction; see e.g. \cite[Theorem 14.1]{BensonMargalit}. There is a subspace $T$ of $S$ that $a,b$ fills, i.e. the tubular neighborhood of $a\cup b$. Let $\langle T_a,T_b\rangle$ be the group generated by $T_a$ and $T_b$ in Mod$(T)$. Thurston's theorem says that when $a,b$ fill, there is a representation $\rho: \langle T_a,T_b\rangle\to \text{PSL}(2,\mathbb{R})$ such that 
\[
T_a\to \begin{bmatrix}
1 & -i(a,b)\\
0 & 1
\end{bmatrix}
\text{ and }
 T_b\to \begin{bmatrix}
1 & 0\\
i(a,b) & 1
\end{bmatrix}.
\]
$\rho(h)$ is parabolic if and only if $h$ is reducible on $T$. We know that 
\[
\rho(T_aT_b)= \begin{bmatrix}
1 & -i(a,b)\\
0 & 1
\end{bmatrix}\begin{bmatrix}
1 & 0\\
i(a,b) & 1
\end{bmatrix}=\begin{bmatrix}
1-i(a,b)^2 & -i(a,b)\\
i(a,b) & 1
\end{bmatrix}
\]
Since Trace$(\rho(T_aT_b))=2-i(a,b)^2$, we know that $T_aT_b$ is reducible on $T$ if and only if $i(a,b)=2$. By the lantern relation, we know that $T_aT_b$ is a multitwist when $i(a,b)=2$.
\end{proof}

The following lemma determines $s(T_a)$ for any simple closed curve $a$ on $D_n$.

\begin{lem}[\bf\boldmath The lift of a Dehn twist]
Let $a$ be a simple closed curve on $D_n$, then $s(T_a)$ can only be one of the following three cases:

(1) It can be a Dehn twist $T_{a'}$ about a curve $a'$ on $D_{n+1}$ such that after forgetting $x_0$, we have $a'=a$.

(2) It can be a multitwist $T_{a'}T_c^m$ (i.e. a product of twists on disjoint curves) about two curves $a'$ and $c$ on $D_{n+1}$ for $m\in \mathbb{Z}$, where $c$ surrounds $2$ points $\{x_0,x_k\}$ and after forgetting $x_0$, we have that $a'=a$.

(3) It can be $T_{a'}(T_{a'}T_{a''}^{-1})^n$, where $a'$ and $a''$ are disjoint on $D_{n+1}$  such that after forgetting $x_0$, we have that $a'=a''=a$.
\label{lift}
\end{lem}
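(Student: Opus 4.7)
The plan is to apply Thurston's trichotomy to $s(T_a)\in PB_{n+1}$ and analyze its canonical reduction system $\sigma := \mathrm{CRS}(s(T_a))$, using McCarthy's centralizer theorem to rule out the pseudo-Anosov case and Proposition~\ref{2int} together with the forget map $f_n(\mathbb{R}^2)_*$ to pin down the multi-twist structure. The two payoffs of this outline are: (i) identify the possible curves of $\sigma$, and (ii) identify the twist exponents and eliminate higher Thurston strata.

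First, I rule out the non-reducible types. Since $a$ is non-peripheral, $T_a$ has infinite order, hence so does $s(T_a)$, ruling out periodicity. If $s(T_a)$ were pseudo-Anosov, McCarthy's theorem would force $C_{PB_{n+1}}(s(T_a))$ to be virtually cyclic; but any simple closed curve $b\subset D_n$ disjoint from $a$ and not isotopic to it gives a rank-two free abelian subgroup $\langle T_a,T_b\rangle\subset PB_n$ which $s$ injects into $C_{PB_{n+1}}(s(T_a))$, contradicting virtual cyclicity. Hence $s(T_a)$ is reducible and $\sigma$ is non-empty.

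Next I would constrain $\sigma$. Each $\gamma\in\sigma$ is fixed by some power $s(T_a)^N$, so its projection $\bar\gamma\subset D_n$ is fixed by $T_a^N$; by Proposition~\ref{noin} applied to $T_a$ (whose CRS is $\{a\}$), either $\bar\gamma=a$, $\bar\gamma$ is peripheral, or $\bar\gamma$ is disjoint from $a$ and nontrivial. To eliminate the last alternative, pick $b\subset D_n$ with $i(b,a)=0$ but $i(b,\bar\gamma)>0$. Then $s(T_b)$ commutes with $s(T_a)$, so by Proposition~\ref{CRS(x,y)} $\mathrm{CRS}(s(T_b))$ is disjoint from $\sigma$; applying the same analysis (bootstrapping) to $s(T_b)$ forces a lift of $b$ into $\mathrm{CRS}(s(T_b))$, whose projection $b$ would then be disjoint from $\bar\gamma$, contradicting $i(b,\bar\gamma)>0$. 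Thus $\bar\gamma$ is $a$ or peripheral. The non-peripheral lifts with peripheral image are exactly the curves surrounding $\{x_0,x_k\}$ for some $k$ (curves around $\{x_0\}$ alone being peripheral in $D_{n+1}$), while $a$ has at most two non-isotopic lifts $a',a''$ (not enclosing and enclosing $x_0$). Combined with mutual disjointness, $\sigma$ must be one of $\{a'\}$, $\{a',c\}$, or $\{a',a''\}$.

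Finally, the canonical form writes $s(T_a)^N=\tau\cdot\phi$ with $\tau$ a multi-twist on $\sigma$ and $\phi$ periodic or pseudo-Anosov on each complementary component. Components not containing $x_0$ embed into $D_n$ under the forget map, which is injective on their supported mapping classes; since $T_a^N$ is trivial off a neighborhood of $a$, $\phi$ is trivial on these. On the $x_0$-component the restriction lies in the Birman point-pushing kernel, and a pseudo-Anosov $\phi$ there would be point-pushing along a filling loop; I would rule this out by choosing a curve $\delta$ with $i(\delta,a)=0$ whose twist $s(T_\delta)$ should commute with $s(T_a)$ but is incompatible via Proposition~\ref{2int} with commuting with such a point-pushing. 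Hence $s(T_a)$ is a pure multi-twist on $\sigma$, and matching $f_n(\mathbb{R}^2)_*(s(T_a))=T_a$ fixes the exponents, yielding precisely the three listed cases.

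The main obstacle is the bootstrap in the second stage that forces $\bar\gamma$ to equal $a$ or be peripheral: this is the technical heart of the lemma and requires Propositions~\ref{noin}, \ref{CRS(x,y)} and \ref{2int} in concert. A secondary subtlety is ruling out the pseudo-Anosov point-pushing contribution on the $x_0$-component, which is again resolved by the lantern/intersection-number-two characterization of Proposition~\ref{2int}.
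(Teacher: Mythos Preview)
Your outline matches the paper's proof closely: rule out periodic and pseudo-Anosov via McCarthy, constrain the projections of $\mathrm{CRS}(s(T_a))$, then show each complementary Thurston piece is trivial. Two steps, however, do not go through as written.

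\textbf{Step 2 (constraining $\bar\gamma$).} Your bootstrap is circular. To rule out a nontrivial $\bar\gamma$ disjoint from $a$ with $\bar\gamma\neq a$, you pick $b$ with $i(b,a)=0$ and $i(b,\bar\gamma)>0$, observe via Proposition~\ref{CRS(x,y)} that $\mathrm{CRS}(s(T_b))$ is disjoint from $\sigma$, and then assert that ``the same analysis forces a lift of $b$ into $\mathrm{CRS}(s(T_b))$''. But that last assertion is an instance of the very lemma you are proving; nothing established so far guarantees that $\mathrm{CRS}(s(T_b))$ contains a curve projecting to $b$. The paper avoids this entirely: since $s(T_b)$ commutes with $s(T_a)$, it permutes $\mathrm{CRS}(s(T_a))$ as a set, so some power of $s(T_b)$ fixes $\gamma$; projecting, some power of $T_b$ fixes $\bar\gamma$, contradicting $i(b,\bar\gamma)>0$. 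No information about $\mathrm{CRS}(s(T_b))$ is needed.

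\textbf{Step 3 (ruling out a pseudo-Anosov piece on the $x_0$-component).} Your plan to invoke Proposition~\ref{2int} here is too vague, and in the critical case where $a$ surrounds exactly two punctures it cannot work as stated: there is no nontrivial curve $\delta$ inside $a$, so any $\delta$ with $i(\delta,a)=0$ lies outside $a'$ and gives no leverage on the interior. The paper handles the case where $a$ surrounds more than two points essentially as you suggest (pick $b$ inside $a$; then $s(T_a)$ must preserve $\mathrm{CRS}(s(T_b))$, which lives inside $a'$, precluding a pseudo-Anosov restriction there). For the two-point case it instead uses a specific lantern relation $T_cT_dT_b=T_aT_e$ with $b,c,d,e$ all disjoint from $a$: each of $s(T_b),s(T_c),s(T_d),s(T_e)$ then fixes $a'$ and is the identity on the $x_0$-side, hence so is their product $s(T_a)$. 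Proposition~\ref{2int} is not used in this lemma at all.
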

\begin{proof}
We start with a the proof of the claim:  After forgetting $x_0$, any element of CRS$(s(T_a))$ is either $a$ or surrounding one puncture (trivial). 

The centralizer of $T_a$ contains $T_b$ when $a,b$ are disjoint curves. By injectivity of $s$, the centralizer of $s(T_a)$ contains a copy of $\mathbb{Z}^2$ when $n>3$. By \cite[Theorem 1]{McCarthy} that the centralizer of a pseudo-Anosov element is virtually cyclic, we know that $s(T_a)$ is not pseudo-Anosov. The injectivity of $s$ also implies that $s(T_a)$ is not a torsion element. Therefore we have that $s(T_a)$ is reducible under Thurston's classification of mapping classes. Assume there exists $b'\in \text{CRS}(s(T_a))$ such that after forgetting $x_0$, we have that $b$ is not trivial and $b\neq a$. Since $b'\in \text{CRS}(s(T_a))$, we have that a power of $s(T_a)$ fixes $b'$. Also a power of any mapping class that commutes with $s(T_a)$ fixes $b$ as well. We break our discussion into the following two cases.
 \begin{itemize}
 \item
{\bf Case 1:} If $i(a,b)\neq 0$, then no power of $T_a$ fixes $b$. However we also have some power of $s(T_a)$ fixes $b'$. This is a contradiction.
\item
{\bf Case 2:} If $i(a,b)=0$ but $b\neq a$, then there exists a curve $c$ such that $i(c,b)\neq 0$ but $i(c,a)=0$. Since $s(T_c)$ commutes with $s(T_a)$, we know that $s(T_c)$ preserves CRS$(s(T_a))$. However $i(b,c)\neq 0$, which shows that no power of $T_c$ preserve $b$. This contradicts the fact that a power of  $s(T_c)$ preserves $b'$.
\end{itemize}

By the disjointness of curves in canonical reduction system, we have that CRS$(s(T_a))$ contains at most 2 curves. We break the rest of the proof into 3 cases.
 \begin{itemize}
 \item
{\bf\boldmath Case 1: CRS$(s(T_a))$ only contains one curve $a'$.} It depends on the location of $x_{0}$, only one side of $a'$ will contain $x_{0}$, which means only that side could $s(T_a)$ could possibly be not identity. If $a'$ surrounds $x_0$ and $a$ surrounds more than $2$ points, there is a curve $b$ inside of $a$ containing 2 points, therefore $s(T_a)$ fixes CRS$(s(T_b))$ inside of $a'$, which means $s(T_a)$ does not acts as pseudo-Anosov inside $a'$. This proves that a power of $s(T_a)$ is the identity on the inside. Therefore a power of $s(T_a)$ is a power of the $T_{a'}$. Since $s(T_a)$ is a lift of $T_a$, we have that a power of $s(T_a)T_{a'}^{-1}$ is the identity. Therefore $s(T_a)=T_{a'}$.

If $a'$ surrounds $x_0$ and $a$ surrounds $2$ points, we position $a$ as in the Figure \ref{LEMMA2.12(1)}.
\begin{figure}[H]
\centering
  \includegraphics[scale=0.35]{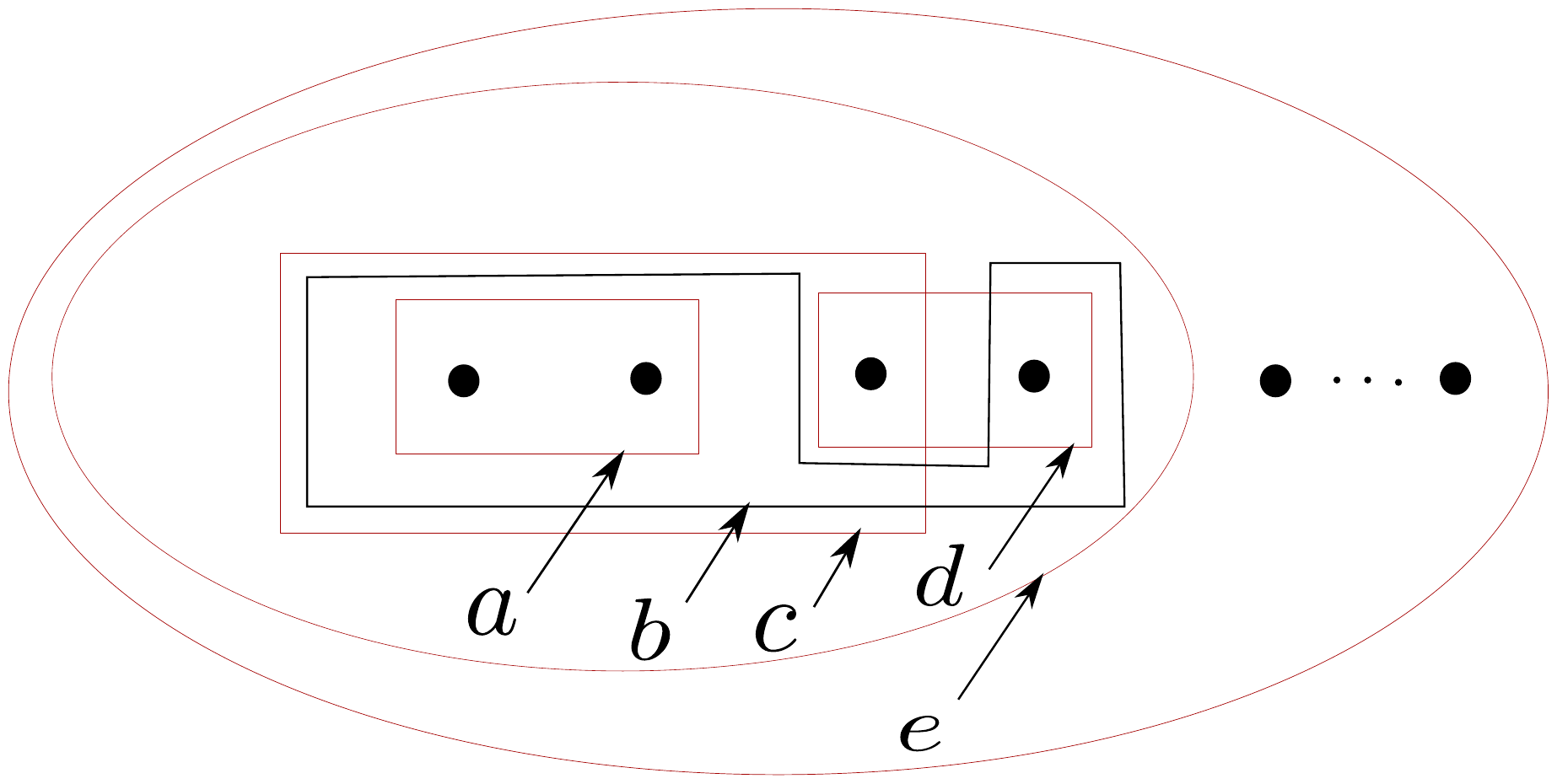}
  \caption{$T_cT_dT_b=T_aT_e$}
  \label{LEMMA2.12(1)}
\end{figure}
By the lantern relation, we have
\begin{equation}
T_cT_dT_b=T_aT_e.
\label{lr}
\end{equation}
Since $T_b,T_c,T_d,T_e$ commutes with $T_a$, we have that $s(T_b),s(T_c),s(T_d),s(T_e)$ fix $a'$ and therefore is identity on the component that $a'$ stay. Since $s(T_a)$ is a product of $s(T_b),s(T_c),s(T_d),s(T_e)$, we know that $s(T_a)$ is also identity in the interior of $a'$. Therefore $s(T_a)=T_{a'}$. The case when $a'$ does not surround $x_0$ is similar.

\item
{\bf\boldmath Case 2:  CRS$(s(T_a))$ only contains two curves $a'$ and $c$ such that $c$ surrounds $2$ points $\{x_0,x_k\}$. } On both the exterior of $a'$ and the interior of $a'$ pinching curve $c$, we know that $s(T_a)$ is identity. Therefore we know that $s(T_a)$ is the multi-twist on $a',c$. 
\item
{\bf\boldmath Case 3:  CRS$(s(T_a))$ only contains two curves $a'$ and $a''$ such that after forgetting $x_0$, both curves $a'$ and $a''$ become $a$. } On both the interior of $a'$ and the exterior of $a''$, we know that $s(T_a)$ is identity. Therefore we know that $s(T_a)$ is the multi-twist on $a',a''$. \qedhere
\end{itemize}
\end{proof}

\begin{nota}
In the following argument, we will use small letters like $a,b,c,...$ to represent simple closed curves on $D_n$ and small letters with a prime or double primes like $a',a'',b', ...$ to represent the canonical reduction systems of $s(T_a), s(T_b),...$. If we have two curves in $\text{\normalfont CRS}(s(T_a))$, we use $a'$ and $a''$. 
\end{nota}
\subsection{Step 2: the case of adding points at infinity}
On $D_n$, we call a simple closed curve surrounding 2 points by \emph{a basic simple closed curve}. The following lemma gives one condition for $s(PB_n)$ to preserve a simple closed curve surrounding $\{x_1,...,x_n\}$.

\begin{lem}If the canonical reduction system of any basic simple closed curve does not contain a curve surrounding $x_0$, then $s(PB_n)$ preserves a simple closed curve surrounding $\{x_1,...,x_n\}$.
\label{basic}
 \end{lem}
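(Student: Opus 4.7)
The plan is to apply the Lift Lemma (Lemma \ref{lift}) to every basic simple closed curve $a$ on $D_n$ and argue that, under the hypothesis, $s(T_a)$ must be a single Dehn twist $T_{a'}$ whose support can be pushed off a small neighborhood of $x_0$. Then I would exhibit one curve $c$ on $D_{n+1}$ surrounding $\{x_1,\ldots,x_n\}$ that is simultaneously fixed by every such $T_{a'}$, and conclude by using that $PB_n$ is generated by Dehn twists about basic simple closed curves (the standard $A_{ij}$ generators of the pure braid group).

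First I would rule out cases (2) and (3) of Lemma \ref{lift} for basic $a$. In case (2) the auxiliary curve $c$ lies in $\text{CRS}(s(T_a))$ and surrounds $\{x_0,x_k\}$, hence surrounds $x_0$, which contradicts the hypothesis. In case (3) the disjoint curves $a',a''$ both reduce to $a$ after forgetting $x_0$ and are non-isotopic in $D_{n+1}$; but two disjoint simple closed curves in $D_{n+1}$ enclosing the same set of punctures cobound an annulus containing no puncture and are therefore isotopic, so exactly one of $a',a''$ must surround $x_0$, again contradicting the hypothesis. Thus only case (1) survives, and $s(T_a)=T_{a'}$ where $a'$ is a simple closed curve on $D_{n+1}$ surrounding exactly the two points that $a$ surrounds (and in particular not $x_0$).

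Second, I would produce the candidate curve $c$: choose a small loop $d$ around the puncture $x_0$ in $D_{n+1}$ and let $c$ be the simple closed curve on the other side of the annulus bounded by $d$, so that $c$ surrounds $\{x_1,\ldots,x_n\}$. For each basic $a$, the curve $a'$ does not surround $x_0$, so a representative of its isotopy class can be chosen in the complement of a small disk neighborhood of $x_0$; in particular $i(a',c)=0$, so $T_{a'}$ fixes the isotopy class of $c$.

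Finally, since $PB_n=\text{PMod}(D_n)$ is generated by Dehn twists about basic simple closed curves, $s(PB_n)$ is generated by $\{T_{a'}:a\ \text{basic}\}$, and each generator preserves $c$; hence $s(PB_n)$ preserves $c$, as required. The main obstacle is the case-(3) disjointness argument: one has to be careful that two disjoint curves in $D_{n+1}$ with the same image after forgetting $x_0$ cannot both avoid $x_0$, which is where the ``empty annulus forces isotopy'' step has to be applied cleanly.
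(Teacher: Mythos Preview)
Your ruling-out of cases (2) and (3) of Lemma~\ref{lift} under the hypothesis is correct, and so is the conclusion that for every basic curve $a$ one has $s(T_a)=T_{a'}$ with $a'$ a curve in $D_{n+1}$ that surrounds exactly the two punctures of $a$ and not $x_0$.

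The gap is in your second step. You assert that a single curve $c$ surrounding $\{x_1,\ldots,x_n\}$ can be chosen disjoint from \emph{all} of the $a'$ simultaneously, and you justify this by saying that each $a'$ can be isotoped off a small disk about $x_0$. But that disk depends on $a'$: the isotopy class of a simple closed curve in $D_{n+1}$ is \emph{not} determined by the set of punctures it encloses, so there are infinitely many isotopy classes of curves surrounding $\{x_1,\ldots,x_n\}$, and a curve $a'$ that misses $x_0$ can still intersect any particular such $c$ essentially. (Concretely: if $c=b_{1\cdots n}$ and $a'=T_{b_{0j}}(b_{ij})$ for suitable $i,j$, then $a'$ still surrounds only $\{x_i,x_j\}$ yet has $i(a',c)>0$.) Your construction of $c$ as ``the other side of the annulus bounded by $d$'' does not pin down a well-defined isotopy class, and no choice of $c$ is known a priori to work for every $a'$ at once.

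The paper gets around this by a different mechanism: it first upgrades the hypothesis from basic curves to \emph{all} curves, via an innermost-curve argument with the lantern relation, and then applies this to the boundary curve of $D_n$ (the generator of the centre of $PB_n$). Its canonical reduction system is a single curve $c'$ surrounding $\{x_1,\ldots,x_n\}$, and since every $T_a$ commutes with the central twist, every $s(T_a)$ preserves $\text{CRS}(s(T_c))=\{c'\}$. In other words, the invariant curve is produced from the centre, not from an attempt to find a common small neighbourhood of $x_0$. This is the missing idea in your argument.
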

 
\begin{proof}
Suppose that there exists a simple closed curve $a$ such that $\text{\normalfont CRS}(a)$ contains a curve surrounding $x_0$. We call $a$ the \emph{innermost} if $a$ surrounds $k$ points and the canonical reduction systems of all curves surrounding $k-1$ points does not contains a curve surrounding $x_0$. Take an innermost curve $a$ such that $a$ surrounds $k$ points in $D_{n}$. By the assumption of Lemma \ref{basic}, we have that $k>2$. There are three cases according to Lemma \ref{lift}.
\begin{itemize}
\item
{\bf\boldmath Case 1: $\text{\normalfont CRS}(a)=\{a'\}$ such that after forgetting $x_0$, we have $a'=a$.} We take $b$ and $c$ inside $D_n$ as in the following figure, we have the lantern relation $T_bT_c=T_eT_aT_d^{-1}$. 
\begin{figure}[H]
\centering
 \includegraphics[scale=0.3]{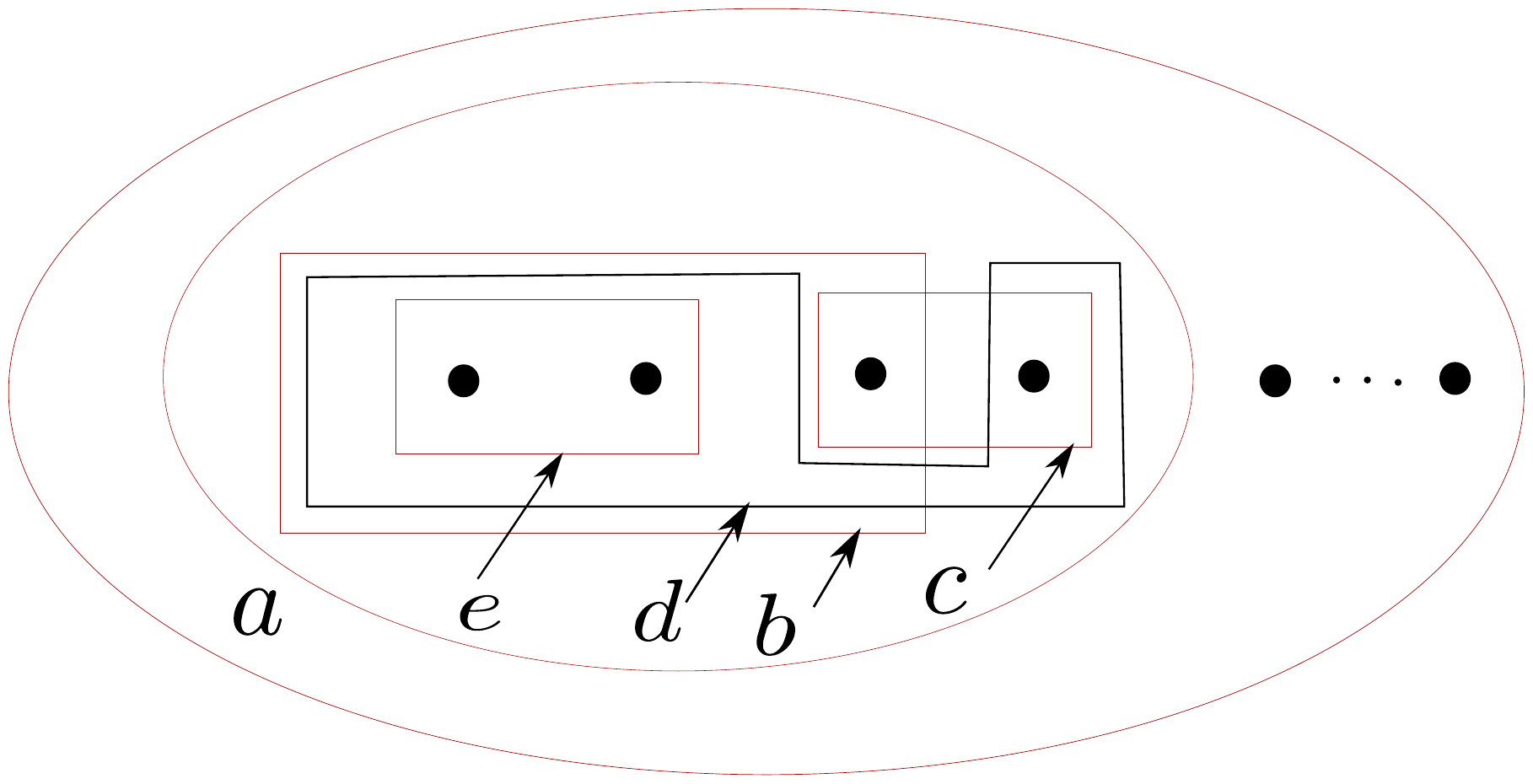}
   \caption{$T_bT_c=T_eT_aT_d^{-1}$}
  \label{figure10}
\end{figure}

Because $b,c,d,e$ surround less points than $k$ and $a$ is the innermost curve, we know that $\text{\normalfont CRS}(s(T_b))$, $\text{\normalfont CRS}(s(T_c))$, $\text{\normalfont CRS}(s(T_d))$ and $\text{\normalfont CRS}(s(T_e))$ each only contains one curve not surrounding $x_0$ and we denote them by $b',c',d',e'$. Since $T_e$, $T_a$ and $T_d$ commute with each other, their canonical reduction systems would be disjoint. By Lemma \ref{lift}, we know that $s(T_eT_aT_d^{-1})$ is also a multitwist. Therefore by Lemma \ref{2int}, we know that $i(b',c')=2$. However $\text{\normalfont CRS}(T_{b'}T_{c'})$ does not contain $a'$ because $a'$ surround $x_0$ but $b',c'$ do not. This is a contradiction.
\item
{\bf\boldmath Case 2: CRS$(a)=\{a',a''\}$ such that $a''$ surrounds $2$ points $\{x_0,x_k\}$ and after forgetting $x_0$, we have that $a'=a$.}\\
\item
{\bf\boldmath Case 3: CRS$(a)=\{a',a''\}$ such that after forgetting $x_0$, we have that $a'=a''=a$.}\\
\\
Case 2 and 3 can be proved using a similar argument as in Case 1. We construct the same lantern relation and use the fact that $a$ is the innermost curve to reach a contradiction. Therefore if the canonical reduction systems of all basic simple closed curves do not contain a curve surrounding $x_0$, then the canonical reduction systems of any curve does not surround $x_0$. This is true for the center element of $PB_n$ as well. Let $c$ be the boundary curve of $D_n$. Then CRS$(c)=\{c'\}$ does not contain $x_0$. However, all Dehn twists commute with $T_c$ which preserves $c'$. \qedhere
\end{itemize}
\end{proof}

Now we introduce a generating set for $PB_n$. Consider the $n$-punctured disk $D_n$ in Figure \ref{51}. Let $L$ be a segment below all the other points. Let $L_1,...,L_n$ be segments connecting $x_1,...,x_n$ to the segment $L$. Figure \ref{52} is the corresponding figure for $D_{n+1}$.
\begin{figure}[H]

\minipage{0.33\textwidth}\centering
  \includegraphics[scale=0.17]{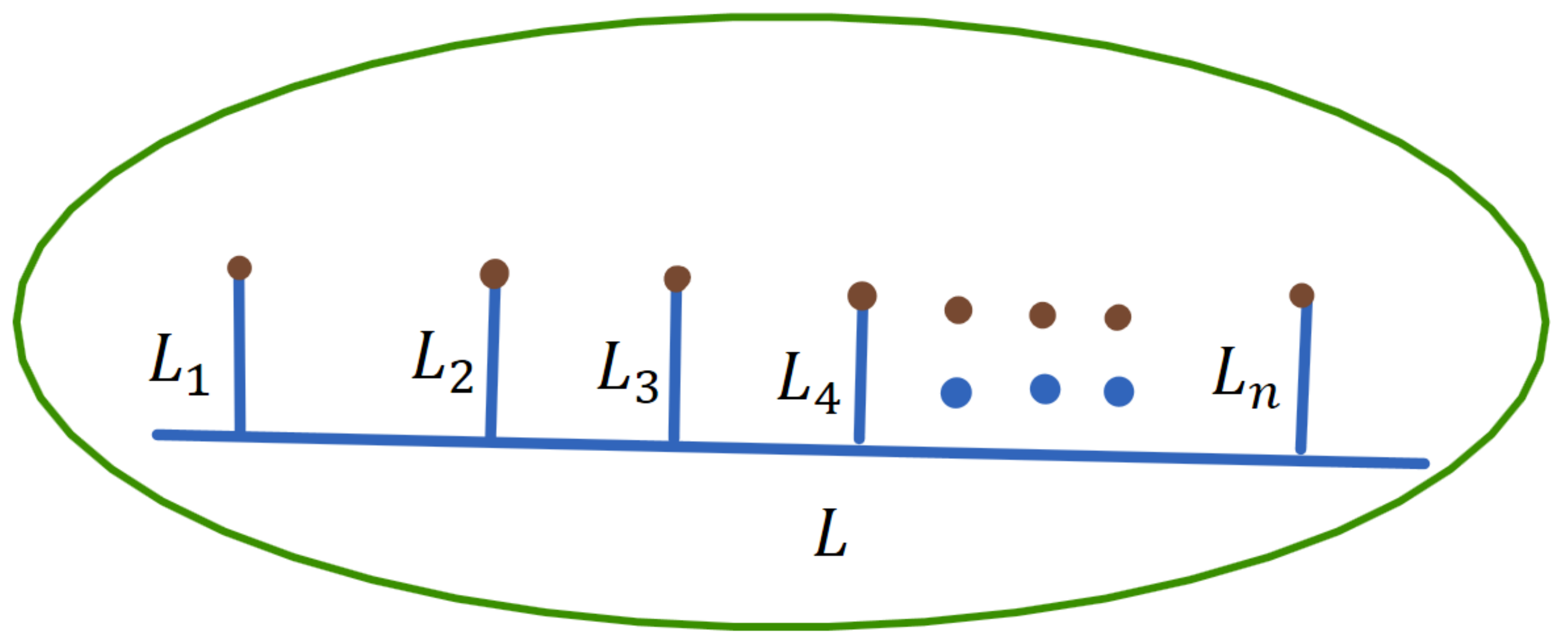}\label{51}
  \caption{$D_{n}$.}\label{51}
\endminipage\hfill
\minipage{0.33\textwidth}\centering
  \includegraphics[scale=0.17]{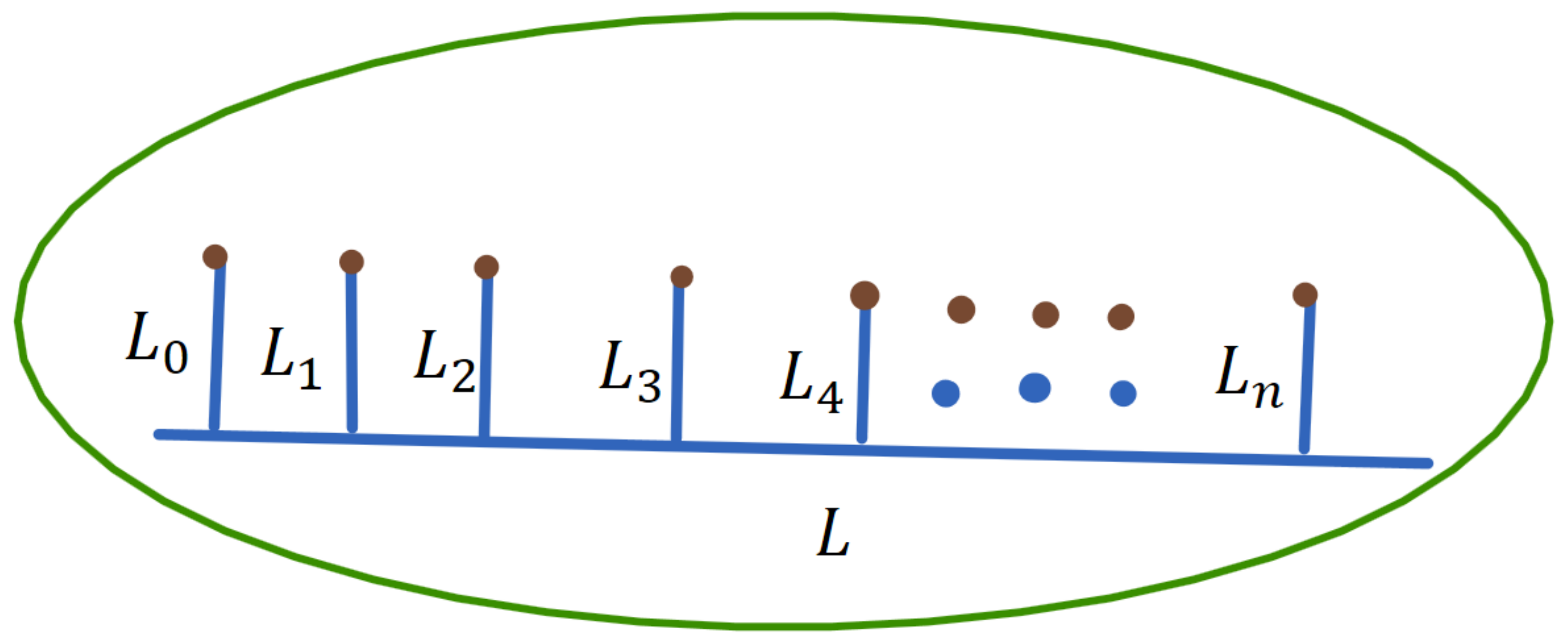}
  \caption{$D_{n+1}$.}\label{52}
\endminipage\hfill
\minipage{0.33\textwidth}\centering
  \includegraphics[scale=0.17]{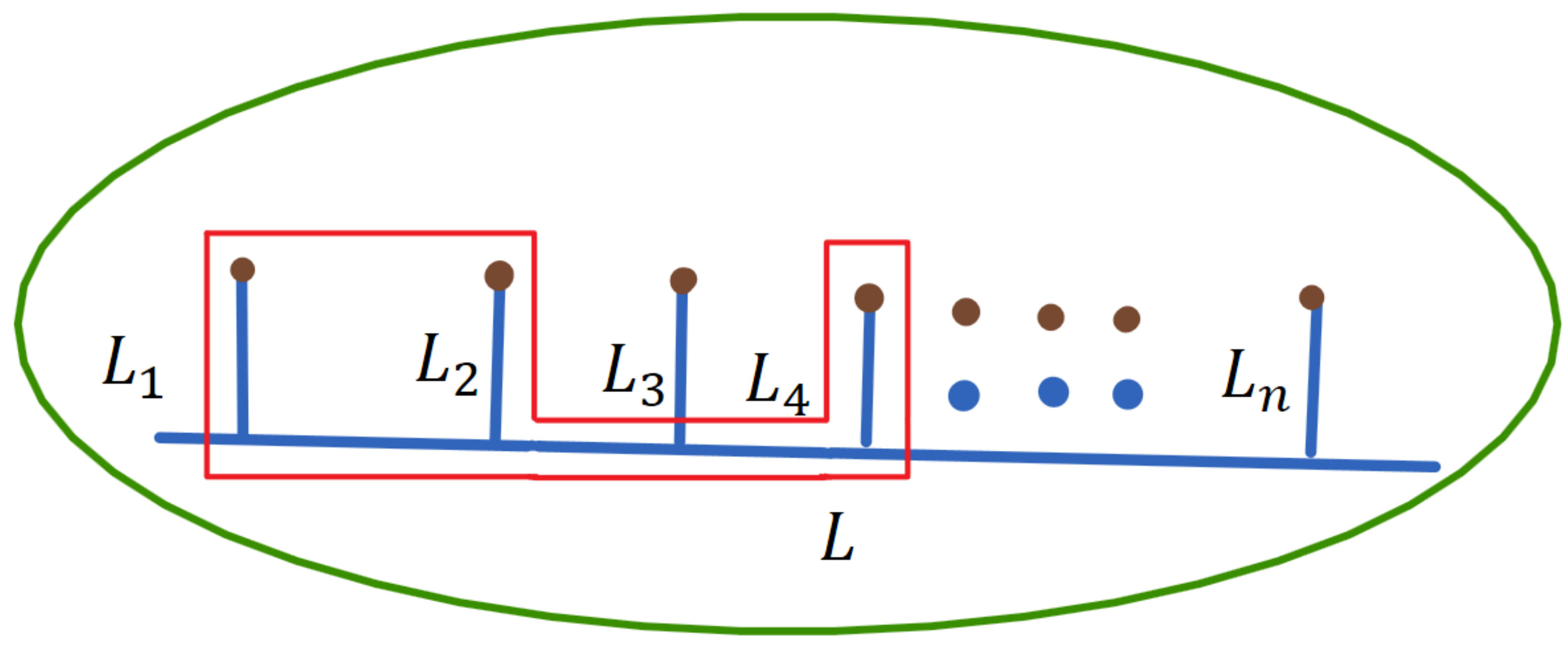}
  \caption{An example of Notation \ref{notation} for $a_{124}$.}
  \label{1000}
\endminipage\hfill
\end{figure}
\begin{nota}
For $\{i_1,...,i_k\}$ a subset of $\{1,...,n\}$, let $a_{i_1i_2...i_k}$ be the boundary curve of the tubular neighborhood of $L\cup \cup_{m=1}^{k}L_{i_k}$. Denote by $A_{i_1i_2...i_k}$ the Dehn twist about $a_{i_1i_2...i_k}$.  For $\{i_1,...,i_k\}$ a subset of $\{0,1,...,n\}$, let $b_{i_1i_2...i_k}$ be the boundary curve of the tubular neighborhood of $L\cup \cup_{m=1}^{k}L_{i_k}$. Denote by $B_{i_1i_2...i_k}$ the Dehn twist about  $b_{i_1i_2...i_k}$. See Figure \ref{1000} for an example of a curve representing $a_{124}$.
\label{notation}
\end{nota}

The following proposition describes a generating set of the group $PB_n$.
 \begin{prop}
There is a generating set of $PB_n$ consisting of all the Dehn twists about the basic curves $a_{ij}$ for $1\le i<j\le n$.
\end{prop}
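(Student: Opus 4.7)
The plan is to proceed by induction on $n$, using the Fadell--Neuwirth fibration obtained by forgetting the last puncture $x_n$. On fundamental groups this yields the short exact sequence
\[
1 \to F_{n-1} \to PB_n \xrightarrow{f_\ast} PB_{n-1} \to 1,
\]
where the kernel $F_{n-1} = \pi_1(\mathbb{R}^2 \smallsetminus \{x_1,\ldots,x_{n-1}\},\, x_n)$ is free on loops $\gamma_1,\ldots,\gamma_{n-1}$, with $\gamma_i$ a small loop in the base encircling $x_i$. The base case $n=2$ is immediate: $PB_2 \cong \mathbb{Z}$, generated by $A_{12}$.

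The main step I would then carry out is to recognize each $\gamma_i$, viewed as an element of $PB_n = \text{PMod}(D_n)$, as a single basic Dehn twist. Under the point-pushing identification (the Birman exact sequence), the image of $\gamma_i$ is the mapping class that drags $x_n$ once around $x_i$, which by the standard point-push formula equals $T_{\alpha}T_{\beta}^{-1}$, where $\alpha$ and $\beta$ are the two boundary components of a regular neighborhood of a representative of $\gamma_i$. Here $\alpha$ is a curve surrounding $\{x_i, x_n\}$, so $\alpha$ is isotopic to $a_{in}$, and $\beta$ is a peripheral loop around the single puncture $x_i$, for which the Dehn twist is trivial in $\text{Mod}(D_n)$. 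Hence the point-push is exactly $A_{in}$, and $F_{n-1}$ is generated by $\{A_{1n},\ldots,A_{(n-1)n}\}$, all of which are twists about basic curves.

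Next I would invoke the inductive hypothesis to conclude that $PB_{n-1}$ is generated by the basic twists $\{A_{ij} : 1 \le i < j \le n-1\}$. Each such $A_{ij}$ lifts set-theoretically to $PB_n$ as the Dehn twist about the basic curve $a_{ij}$ in $D_n$, chosen to lie in a regular neighborhood of $L \cup L_i \cup L_j$ disjoint from $L_n$, so that $f_\ast(A_{ij}) = A_{ij}$. A routine diagram chase using the short exact sequence then shows that the union of the kernel generators with these lifts generates $PB_n$, and this union is exactly $\{A_{ij} : 1 \le i < j \le n\}$.

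No substantial obstacle arises: the only nontrivial ingredient is the peripheral-curve observation $T_\beta = 1$, and the availability of a set-theoretic lift of each generator through $f_\ast$ (which does not require $f_\ast$ to split as a group homomorphism). The statement is essentially classical and I would reference \cite[Section 9.3]{BensonMargalit} for the point-pushing computation.
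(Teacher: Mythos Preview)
Your proposal is correct and follows exactly the approach the paper sketches: the paper's proof consists only of the sentence ``This is classical and can be proved by induction on the exact sequence $1\to F_k\to PB_{k+1}\to PB_k\to 1$'' together with a reference to Artin, and you have supplied precisely that inductive argument with the point-pushing details filled in. The one minor point to tighten is that your free generators $\gamma_i$ must be chosen compatibly with the arcs $L, L_i, L_n$ so that the boundary curve of the point-push neighborhood is actually isotopic to the specific curve $a_{in}$ of Notation~\ref{notation}, not just to some curve surrounding $\{x_i,x_n\}$; this is easily arranged.
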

\begin{proof}
This is classical and can be prove it by induction on the exact sequence
\[1\to F_k\to PB_{k+1}\to PB_k\to 1.\]
This generating set is given by Artin; e.g. see \cite[Theorem 2.3]{MR2490001}
\end{proof}

\subsection{Step 3: Finishing the proof of Theorem \ref{PB}} In this subsection, we prove Theorem \ref{PB}. We break the proof into several cases. By Lemma \ref{basic}, we only need to consider the case that there exists at least one basic simple closed curve $a$ such that some element of CRS$(a)$ surrounds $x_0$. We break our discussion into the following four cases.

\para{Case 1: The canonical reduction systems of any basic curves only contain one curve $a'$ and $a'$ surrounds $x_0$.}  
\begin{proof}[Proof of Case 1]
Let $a,b,c,d$ be the curves in Figure \ref{CASE1}. We have the lantern relation $T_aT_bT_c=T_d$. 
\begin{figure}[H]
\minipage{0.48\textwidth}
\centering
  \includegraphics[scale=0.4]{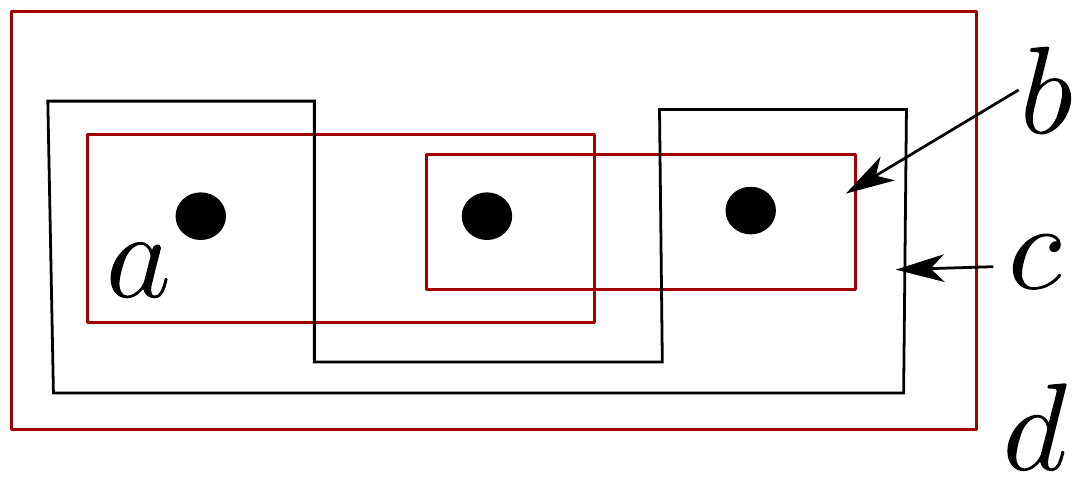}
  \caption{$D_n$}\label{CASE1}
\endminipage\hfill
\minipage{0.48\textwidth}
\centering
  \includegraphics[scale=0.4]{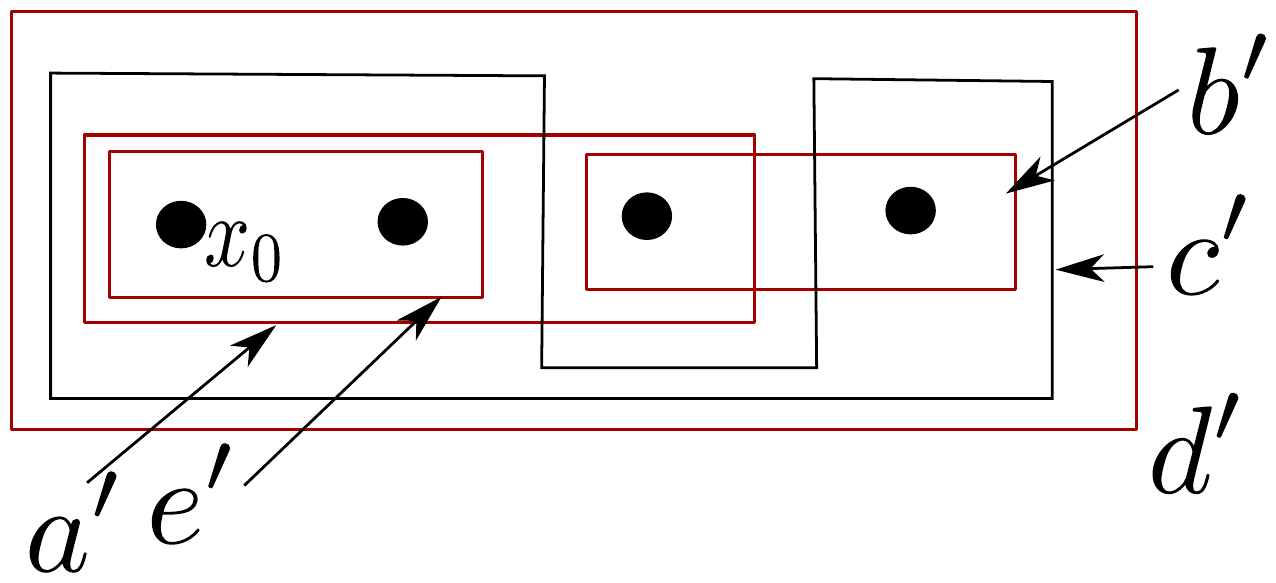}
    \caption{$D_{n+1}$}\label{CASE1(2)}
\endminipage\hfill
\end{figure}

Since $T_c$ and $T_d$ commute, $s(T_c)=T_{c'}$ and $s(T_d)=T_{d'}$ also commute. Therefore $s(T_a)s(T_b)=T_{c'}^{-1}T_{d'}$ is a multitwist by Lemma \ref{lift}. By Lemma \ref{2int}, we know that $i(b',a')=2$ as in Figure \ref{CASE1(2)}. Suppose that $b'$ does not surround $x_0$. By the lantern relation, $T_{a'}T_{b'}=T_{d'}T_{e'}T_{c'}^{-1}$. Since $s(T_d)$ and $s(T_c)$ are commuting multicurves, $s(T_d)s(T_c)^{-1}$ is multicurve as well. Since $a,b,c$ are basic curves, we know that $s(T_d)=T_{d'}T_{e'}$ and $s(T_c)=T_{c'}$. By the same reason, we have that $s(T_f)=T_{f'}$ in Figure \ref{CASE1(3)} and \ref{CAS}.

\begin{figure}[H]
\minipage{0.48\textwidth}
\centering
  \includegraphics[scale=0.4]{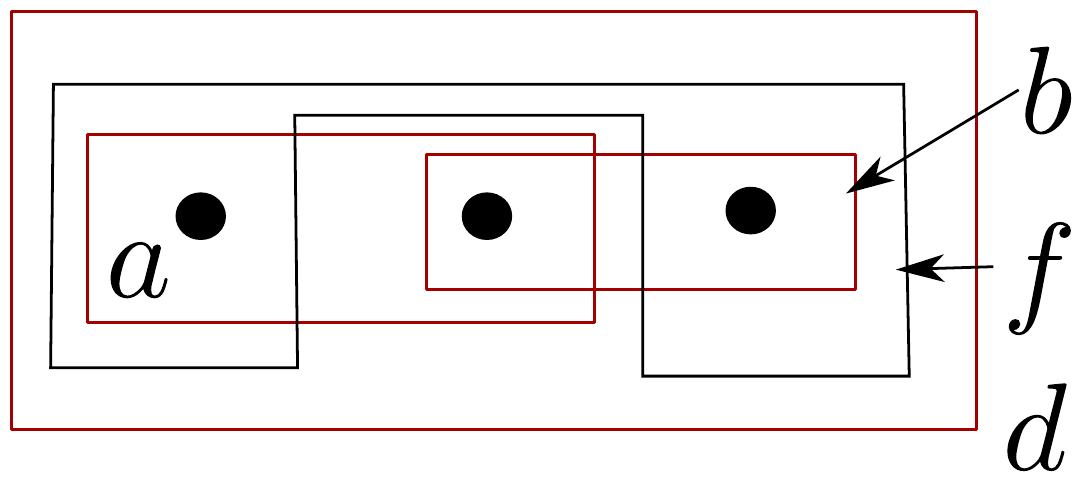}
  \caption{$D_n$}\label{CASE1(3)}
\endminipage\hfill
\minipage{0.48\textwidth}
\centering
  \includegraphics[scale=0.4]{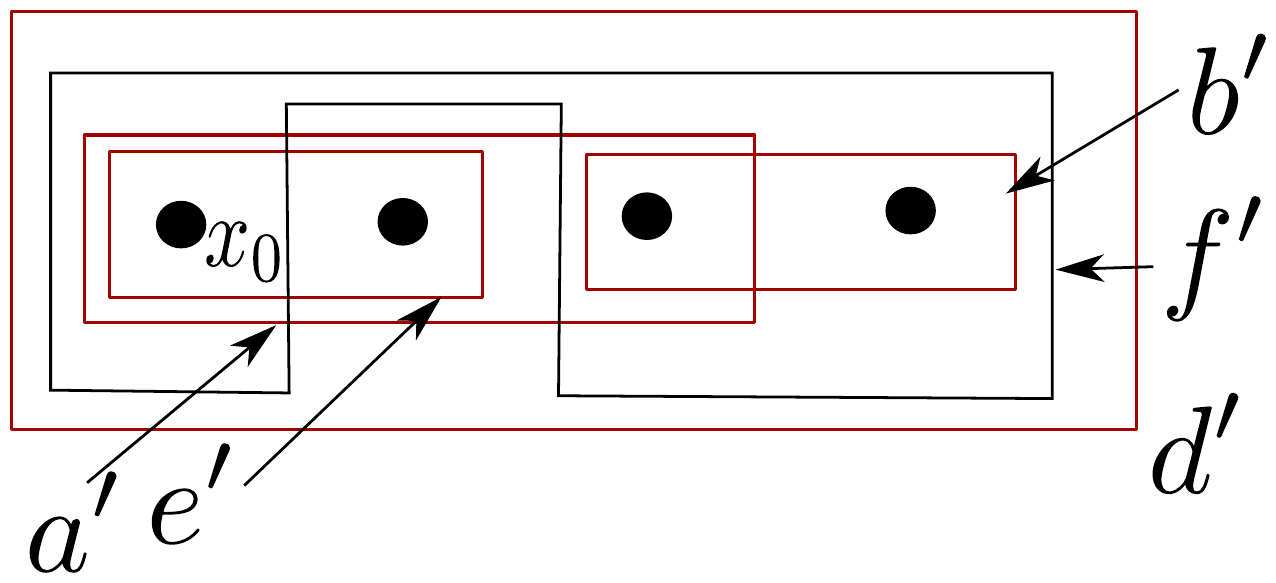}
    \caption{$D_{n+1}$}\label{CAS}
\endminipage\hfill
\end{figure}

In the following, we prove that $s(PB_n)$ preserves $b_{01}$. Under Notation \ref{notation} for $PB_n$, we have $a=a_{12}$, $b=a_{23}$, $c=a_{13}$ and $d=a_{123}$. We also have that $s(A_{12})=B_{012}$, $s(A_{23})=B_{23}$ and $s(A_{13})=B_{013}$. Since $A_{ij}$ generates $PB_n$, all we need to show is that $s(A_{ij})$ preserves $b_{01}$. Since CRS$(d)$ contains $b_{01}$, any curve disjoint from $d$ preserves $b_{01}$. We only need to consider the curves that intersect with $d$. Without loss of generality, we only need to show that $s(A_{14})$, $s(A_{24})$ and $s(A_{34})$ preserve $b_{01}$. By the assumption of Case 1, we only need to show that the CRS$(a_{14})$, CRS$(a_{24})$ and CRS$(a_{34})$ are disjoint from $b_{01}$. 

Since $i(a_{12},a_{34})=0$, we have that CRS$(a_{12})$ is disjoint from CRS$(a_{34})$, which means that CRS$(a_{34})$ is disjoint from $b_{01}$.  Since $s(T_f)=T_{f'}$ in Figure \ref{CASE1(3)} and Figure \ref{CAS}, CRS$(a_{24})$ is also disjoint from $b_{01}$. We have the following lantern relation:
\[
A_{13}A_{34}A_{14}=A_{134}.
\] 
The image of relation under lift $s$ is:
\[
B_{013}B_{34}s(A_{14})=s(A_{134}).
\] 
$A_{134}$ commutes with $A_{13}$ and $A_{34}$, thus CRS$(a_{134})$ is disjoint from $b_{013}$ and $b_{34}$. The only possible curves are $b_{01}$ and $b_{0134}$. If $s(A_{134})=B_{0134}$, we have another lantern relation in $D_{n+1}$:
\[
B_{013}B_{34}B_{014}=B_{0134}B_{01}.
\] 
This proves that $s(A_{14})=B_{014}B_{01}^{-1}$ preserving $b_{01}=e'$. If CRS$(a_{134})$ contains $b_{01}$, we also have that $s(A_{14})$ preserves $b_{01}=e'$. The case when $b'$ surrounds $x_0$ follows from the same argument.
\end{proof}

{\bf\boldmath Case 2: There exists a basic simple curve $a$ such that CRS$(a)$ has two curves and both are isotopic to $a$ after forgetting $x_0$.}
\begin{proof}[Proof of Case 2]
Let $b,c,d,e$ be curves in Figure \ref{15}. We have the lantern relation $T_bT_cT_d=T_eT_a$.
 \begin{figure}[H]
\minipage{0.48\textwidth}
\centering
  \includegraphics[scale=0.4]{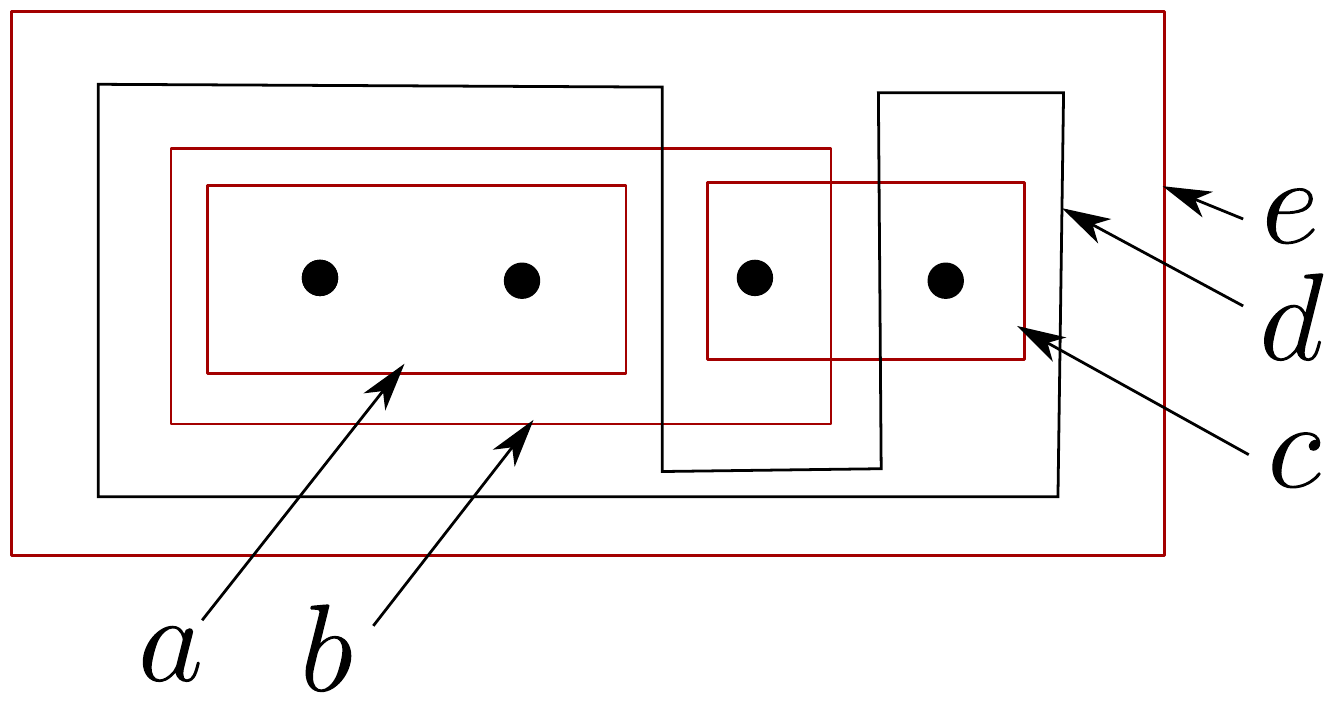}
  \caption{$D_n$}\label{15}
  \label{figure2}
\endminipage\hfill
\minipage{0.48\textwidth}
\centering
  \includegraphics[scale=0.4]{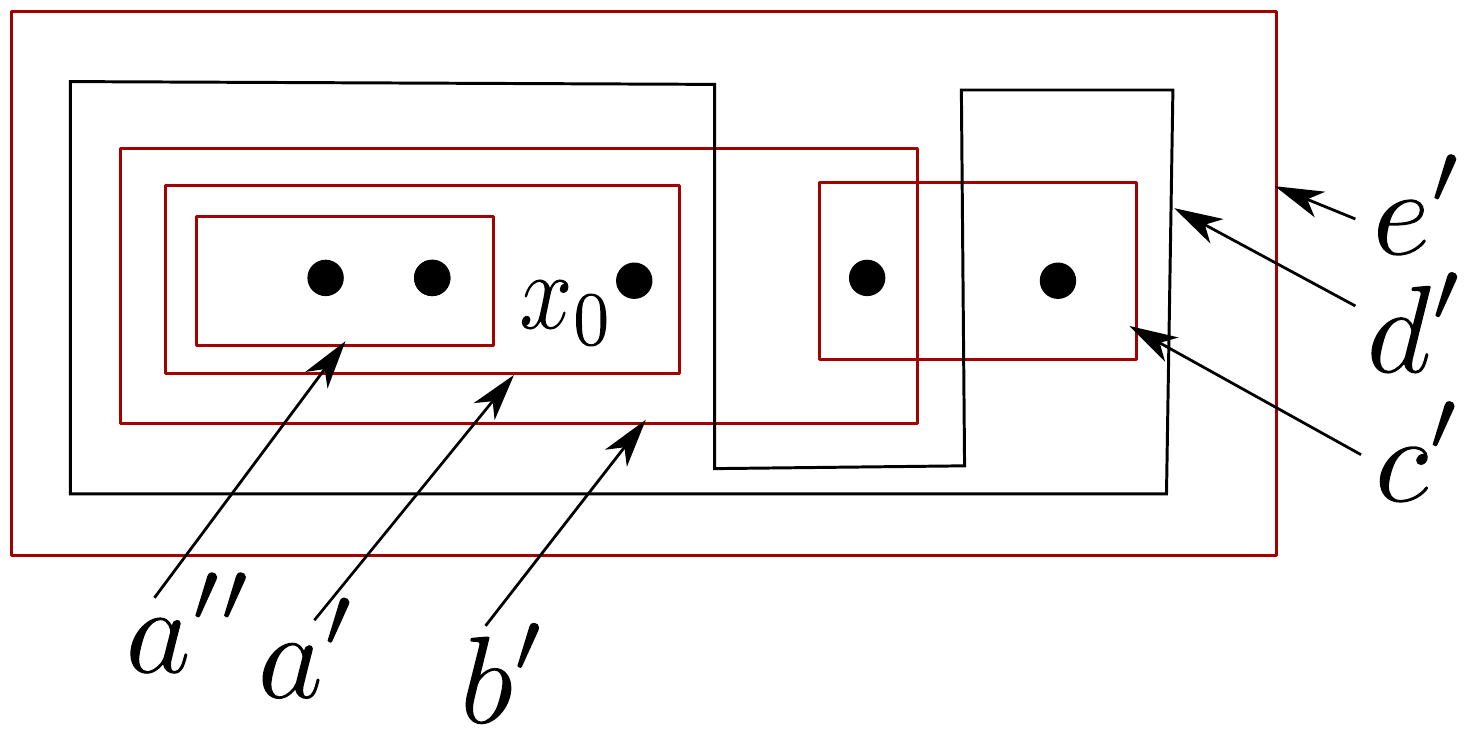}
  \caption{$D_{n+1}$}\label{16} \endminipage\hfill
\end{figure}

Since $b,c,d,e$ are disjoint from $a$, we have that CRS$(b)$, CRS$(c)$, CRS$(d)$ and CRS$(e)$ are disjoint from $\{a',a''\}$. Therefore $s(T_b)=T_{b'}$, $s(T_c)=T_{c'}$, $s(T_d)=T_{d'}$ and $s(T_e)=T_{e'}$ as in Figure \ref{16}. But we also have the lantern relation $T_{b'}T_{c'}T_{d'}=T_{e'}T_{a'}$. Thus $s(T_a)=T_{a'}$. This contradicts the assumption of Case 2.
\end{proof}
{\bf\boldmath Case 3: There exists a basic simple curve $a$ such that CRS$(a)$ has two curves $a',a''$ such that $a'$ is isotopic to $a$ and $a''$ is trivial after forgetting $x_0$, and $a'$ surrounds $a''$.} 
\begin{proof}[Proof of Case 3]
We arrange $a$ to be $a_{12}$ and $a',a''$ to be $b_{01},b_{012}$. Then we have $s(A_{12})=B_{012}B_{01}^k$ for $k\neq 0$ by Lemma \ref{lift}. Without loss of generality, we only need to show that CRS($a_{13}$) and CRS($a_{23}$) are disjoint from $b_{01}$. First of all, we have the following lantern relation:
\[
A_{123}A_{34}A_{124}=A_{12}A_{1234}.
\]

\begin{figure}[H]
\minipage{0.48\textwidth}
\centering
  \includegraphics[scale=0.5]{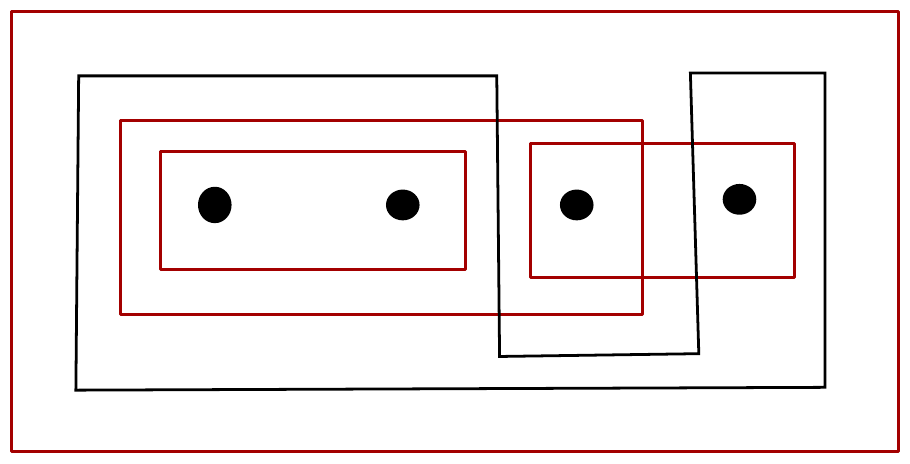}
  \caption{$D_n$}
\endminipage\hfill
\minipage{0.48\textwidth}
\centering
  \includegraphics[scale=0.5]{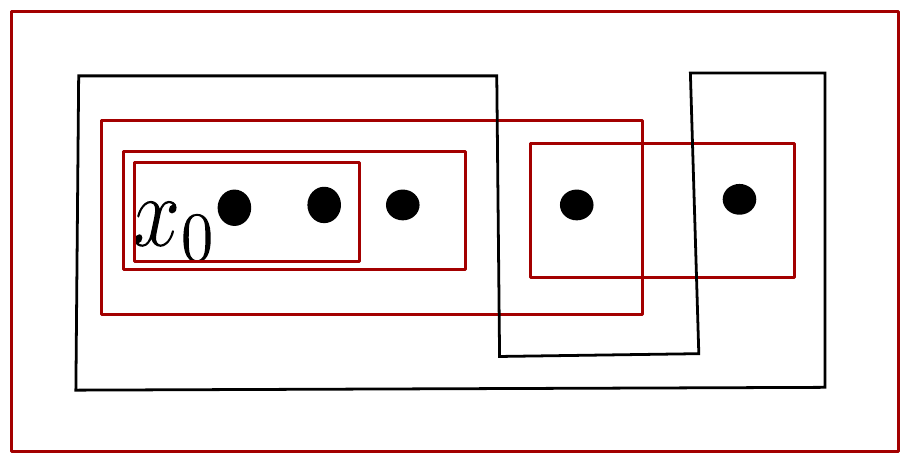}
  \caption{$D_{n+1}$}
\endminipage\hfill
\end{figure}

Since all of the curves above are disjoint from $a_{12}$, their canonical reduction systems are disjoint from $a_{12}'=b_{01}$ and $a_{12}''=b_{012}$. We have the lantern relation:
\[
B_{0123}B_{34}B_{0124}=B_{012}B_{01234}.
\]
Since $s(A_{12})=B_{012}B_{01}^k$, there exists at least one other curve in $a_{123}$, $a_{34}$, $a_{124}$, $a_{1234}$, whose canonical reduction system contains $b_{01}$. We break our discussion into the following four subcases depending on whether $b_{01}$ is an element in CRS$(A_{1234})$, CRS$(A_{123})$,  CRS$(A_{124})$ or  CRS$(A_{34})$, respectively.

\begin{figure}[H]
\minipage{0.44\textwidth}\centering
  \includegraphics[scale=0.5]{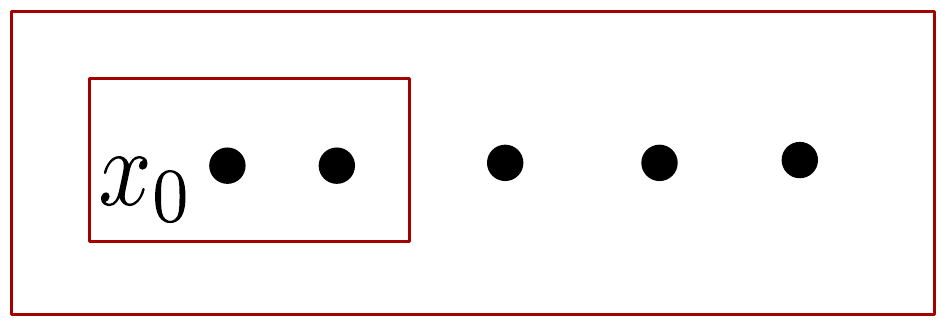}
  \caption{$b_{01}\in$ CRS$(A_{1234})$.}
  \label{figure2}
\endminipage\hfill
\minipage{0.44\textwidth}\centering
  \includegraphics[scale=0.5]{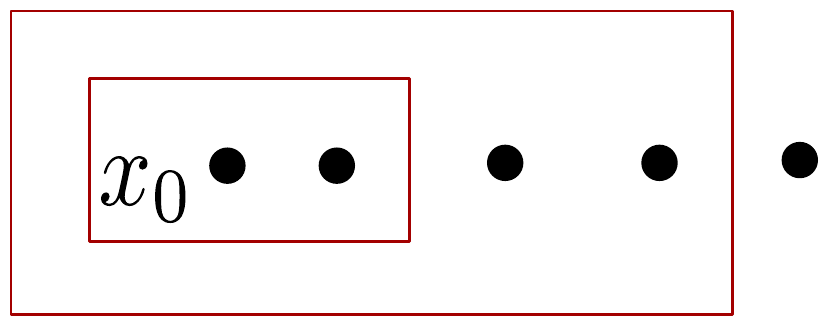}
  \caption{$b_{01}\in$ CRS$(A_{123})$.}
  \label{figure3}
  \endminipage\hfill
  \end{figure}
  \begin{figure}[H]
\minipage{0.44\textwidth}\centering
  \includegraphics[scale=0.5]{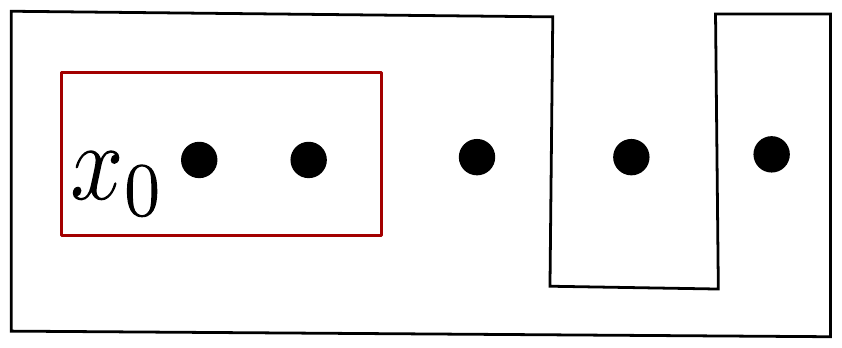}
  \caption{$b_{01}\in$ CRS$(A_{124})$.}
  \label{figure2}
\endminipage\hfill
\minipage{0.44\textwidth}\centering
  \includegraphics[scale=0.5]{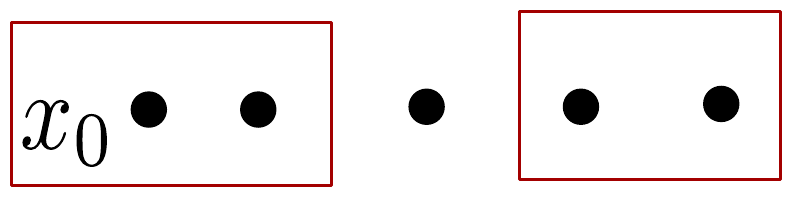}
  \caption{$b_{01}\in$ CRS$(A_{34})$.}
  \label{figure2}
\endminipage\hfill
\end{figure}
\noindent
{\bf\boldmath Subcase 1 and 2:} In the first two cases, it is clear that CRS$(a_{13})$ and CRS$(a_{23})$ are disjoint from $b_{01}$ because $a_{13}$ and $a_{23}$ are disjoint from $a_{123}$ and $a_{1234}$.\\
\\
{\bf\boldmath Subcase 3:} By $i(a_{14},a_{124})=0$, we have that $b_{014}\in \text{CRS}(a_{14})$ and $b_{01}$ does not intersect $\text{CRS}(a_{14})$. Since $i(a_{23},a_{14})=0$, we have that $b_{014}$ does not intersect CRS($a_{23}$). Suppose CRS$(a_{23})$ contains another curve $z$ that is trivial after forgetting $x_0$. Since $a_{23}$ is disjoint from $a_{123}$ and $a_{14}$, we have that $z$ has to be disjoint from $b_{014}$ and $b_{0123}$. The only possibility is that $z=b_{01}$.

Because of the disjointness of $a_{123}$ and $a_{12}$, we have that $s(A_{123})$ preserves CRS$(a_{12})$. This shows that $s(A_{123})$ preserves $b_{01}$. We have a lantern relation
\[
A_{12}A_{23}A_{13}=A_{123}
\]
After applying the homomorphism $s$ to the above relation, all of the above element except $s(A_{13})$ preserves $b_{01}$. Therefore $s(A_{13})$ fixes $b_{01}$.\\
\\
{\bf\boldmath Subcase 4: } Since $i(a_{234},a_{34})=0$, we have that $b_{234}\in \text{CRS}(a_{234})$. Since $i(a_{123},a_{12})=0$, we have that $b_{0123}\in \text{CRS}(a_{123})$. Therefore $b_{23}\in \text{CRS}(a_{23})$ and $\text{CRS}(a_{23})$ may contain another curve $z$ that is trivial after forgetting $x_0$. However $a_{23}$ is disjoint from $a_{123}$ and $a_{234}$, which implies that $z$ is disjoint from $b_{0123}$ and $b_{234}$. Therefore $z$ can only be $b_{01}$. By the same argument as Subcase 3, we know that $A_{13}$ also fixes $b_{01}$.\end{proof}
{\bf\boldmath Case 4: There exists a basic simple curve $a$ such that CRS$(a)$ has two curves $a',a''$ such that $a'$ is isotopic to $a$ and $a''$ is trivial after forgetting $x_0$, and $a'$ does not surround $a''$.} 
\begin{proof}[Proof of Case 4]
Let $a',a''$ be positioned into the following Figure \ref{figure10} such that $a'=b_{34}$ and $a''=b_{01}$. If a curve $c$ is disjoint from $a_{34}$, then $s(T_c)$ preserves $b_{01}$. Therefore without loss of generality, we only need to show that $s(A_{23})$ and $s(A_{13})$ preserve $b_{01}$.
\begin{figure}[H]
\centering
 \includegraphics[scale=0.5]{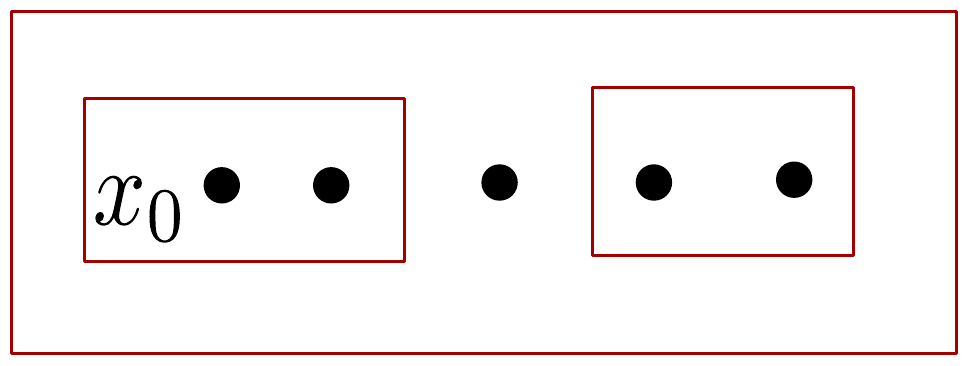}
 \caption{$a'=b_{34}$ and $a''=b_{01}$.}
  \label{figure10}
\end{figure}

Since $i(a_{12},a_{34})=0$, we have that $b_{012}\in\text{CRS}(a_{12})$. Since $i(a_{124},a_{12})=0$, we have that $b_{0124}\in\text{CRS}(a_{124})$. Possibly  CRS($a_{124}$) contains another curve $z$ that is trivial after forgetting $x_0$. However $b_{24}\in \text{CRS}(a_{24})$ because $b_{234}\in \text{CRS}(a_{234})$ and $i(a_{234},a_{24})=0$. Therefore, $z$ is disjoint from $b_{24}$ and $b_{012}$, which means $z=b_{01}$. By the same reason, we can prove that $b_{0123}\in \text{CRS}(a_{123})$ and $s(A_{123})$ preserves $b_{01}$.
We have the following lantern relation.
\begin{equation}
A_{123}A_{34}A_{124}=A_{12}A_{1234}.
\label{case44}
\end{equation}
Since $A_{34}=B_{34}B_{01}^k$ for nonzero $k$, therefore the canonical reduction system of one of the curves in the relation (\ref{case44}) contains $b_{01}$. The rest of the discussion is similar to Case 3 by doing a case study.
\end{proof}

\subsection{The proof of (1) of Corollary \ref{main11}}
\begin{proof}[\bf\boldmath Proof of (1) Corollary \ref{main11}]
Let $B_n=\pi_1(\text{PConf}_n(\mathbb{R}^2)/\Sigma_n)$ and $B_{n,1}=\pi_1(\text{PConf}_{n+1}(\mathbb{R}^2)/\Sigma_n)$. The fiber bundle $F_n(S)$ gives the first line of the following commutative diagram:
\[
\xymatrix{
1\ar[r]&     PB_{n+1}\ar[r]\ar[d]^{f_{n}(\mathbb{R}^2)_*}&    B_{n,1}\ar[r]\ar[d]^{F_{n}(\mathbb{R}^2)_*}&          \Sigma_n   \ar[r]\ar[d]^{=} &1\\
1\ar[r]&      PB_n\ar[r]&                             B_n\ar[r]&                                   \Sigma_n  \ar[r]        &   1  .  }
\]

Every splitting of $F_{n}(\mathbb{R}^2)_*$ induces a splitting of $f_{n}(\mathbb{R}^2)_*$. Therefore, we only need to study the extension of a splitting of $f_{n}(\mathbb{R}^2)_*$ to a splitting of $F_{n}(\mathbb{R}^2)_*$. Let $\phi:B_n\to B_{n,1}$ be a splitting of $F_{n}(\mathbb{R}^2)_*$. Let $x\in PB_n$ and $e\in B_n$. We have that 
\[
\phi(exe^{-1})=\phi(e)\phi(x)\phi(e)^{-1}.
\]
Denote by $C_e$ the conjugation of $e$ on $PB_n$. Therefore, we have the following diagram:
\begin{equation}
\xymatrix{
PB_n\ar[r]^{C_e}\ar[d]^{\phi|_{PB_n}}  &     PB_n\ar[d]^{\phi|_{PB_n}} \\
PB_{n+1}\ar[r]^{C_{\phi(e)}}&        PB_{n+1}. }
\label{DG}
\end{equation}

By Theorem \ref{PB}, there are two possibilities of $\phi|_{PB_n}$: \\
\\
(1) $\phi$ fixes a simple closed curve $c$ surrounding $\{x_k,x_0\}$\\
\\
(2) $\phi$ fixes a simple closed curve $c$ surrounding $\{x_1,...,x_n\}$.\\

We claim that $\phi|_{PB_n}$ fixes a simple closed curve surrounding $\{x_1,...,x_n\}$. To prove this claim, we assume the opposite that $\phi|_{PB_n}$  fixes a simple closed curve $c$ surrounding $\{x_k,x_0\}$. There exists an element $e\in B_n$ such that $e$ permutes punctures $k$ and $j\neq k$. Since $c$ is the only curve that $\phi(PB_n)$ fixes, we have that $c$ is the only curve that $\phi(C_e(PB_n))=PB_n$ fixes, which contradicts that $C_{\phi(e)}(\phi(PB_n))$ also fixes $\phi(e)(c)$ surrounding $\{x_j,x_0\}$. Therefore, $\phi|_{PB_n}$ fixes a simple closed curve surrounds $\{x_1,...,x_n\}$. In this case, the section is adding a point at infinity.
\end{proof}

\section{The case when $S$ is the 2-sphere $S^2$}
In this subsection we give a construction of sections of the fiber bundle $f_n(S^2)$.

\subsection{Nonexistence of a continuous section for $n=2$}
We prove a more general result on the sections of the fiber bundle $f_n(S^2)$ for $n=2$. Let $S^{2k}$ be $2k$-dimensional sphere for $k>0$ integer.  Let $x_1,x_2$ be two distinct points in $S^{2k}$. The following is classical; see \cite[Chapter 3]{FH}.
\begin{prop}
The following fiber bundle
\[
S^{2k}-\{x_1,x_2\}\to \text{\normalfont PConf}_3(S^{2k})\xrightarrow{f_2(S^{2k})} \text{\normalfont PConf}_2(S^{2k})
\]
does not have a continuous section.
\end{prop}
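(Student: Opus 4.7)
The plan is to derive the nonexistence from the classical ``hairy ball'' type degree obstruction on even-dimensional spheres. Given a hypothetical continuous section $s\colon \text{PConf}_2(S^{2k})\to \text{PConf}_3(S^{2k})$, I would first build a more concrete map out of $s$ whose existence is easier to rule out. Namely, consider the embedding $i\colon S^{2k}\to \text{PConf}_2(S^{2k})$ sending $x\mapsto (x,-x)$ (antipodal pairs are distinct) and the projection $\pi_0\colon \text{PConf}_3(S^{2k})\to S^{2k}$ onto the new coordinate. Setting $\varphi := \pi_0\circ s\circ i\colon S^{2k}\to S^{2k}$, the section condition forces $\varphi(x)\notin\{x,-x\}$ for every $x\in S^{2k}$.

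The heart of the argument is then to show that no self-map of $S^{2k}$ can simultaneously avoid $x$ and $-x$. For each condition separately, I would use the standard straight-line homotopy in $\mathbb{R}^{2k+1}$, normalized by Euclidean length. Concretely, if $\varphi(x)\neq -x$ for all $x$, then the homotopy
\[
H_t(x)=\frac{(1-t)\varphi(x)+tx}{\|(1-t)\varphi(x)+tx\|}
\]
is well-defined (the denominator never vanishes, because a zero would force $\varphi(x)=-x$) and joins $\varphi$ to the identity, so $\deg(\varphi)=1$. Dually, if $\varphi(x)\neq x$ for all $x$, the analogous homotopy between $\varphi$ and the antipodal map $a\colon x\mapsto -x$ is well-defined, so $\deg(\varphi)=\deg(a)=(-1)^{2k+1}=-1$. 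The two conditions hold simultaneously, yielding $1=-1$, a contradiction.

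The main substantive step is the reduction from the fiber bundle section to the map $\varphi$; once that is in hand, the degree calculation is routine. There is no genuine obstacle here, since antipodal pairs lie in $\text{PConf}_2(S^{2k})$ and the projection $\pi_0$ is always defined. The parity of the dimension enters precisely in the sign $(-1)^{2k+1}$, which is why the argument is specific to even-dimensional spheres and in particular applies to the $k=1$ case needed for $S^2$.
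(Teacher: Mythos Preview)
Your argument is correct. By restricting a hypothetical section to the antipodal sphere $x\mapsto(x,-x)$ inside $\text{PConf}_2(S^{2k})$, you produce a self-map $\varphi$ of $S^{2k}$ with no fixed points and no antipodal coincidences, and the standard degree dichotomy kills that immediately.

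The paper's proof is essentially the same obstruction, but carried out cohomologically on the full base rather than on a sub-sphere. There one keeps the whole map $f\colon \text{PConf}_2(S^{2k})\to S^{2k}$ (your $\pi_0\circ s$ before restricting), computes $H^{2k}(\text{PConf}_2(S^{2k});\mathbb{Q})\cong\mathbb{Q}$ via the Thom sequence, writes $f^*[S^{2k}]=k\cdot c_1$, and reads off $k+1=0$ and $k-1=0$ from the vanishing of the pulled-back diagonal class under $(f,p_1)$ and $(f,p_2)$. Your integer $\deg(\varphi)$ is exactly this $k$: pulling back along $i$ sends $c_1\mapsto [S^{2k}]$ and $c_2\mapsto -[S^{2k}]$, so the two pairs of equations match up line for line. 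What you gain is elementarity---no Thom isomorphism, no spectral sequence, just straight-line homotopies. What the paper's version gains is that the same template (pull back the diagonal class, force incompatible linear relations) runs unchanged for the genus $g>1$ case in Section~6, where there is no convenient antipodal involution to restrict along.
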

\begin{proof}
Suppose that there is a continuous map $s:\text{PConf}_2(S^{2k})\to \text{PConf}_{3}(S^{2k})$ such that $f_2(S^{2k})\circ s=\text{identity}$. Then after post-composing with a forgetful map to the last coordinate, we obtain a map $f: \text{PConf}_2(S^{2k})\to S^{2k}$. We denote by $p_i:\text{PConf}_2(S^{2k})\to S^{2k}$ the projection to the $i$th component. Let
\[
g_i:\text{PConf}_2(S^{2k}) \xrightarrow{(f,p_i)} \text{PConf}_2(S^{2k})\subset S^{2k}\times S^{2k}.
\]

Let $\triangle\subset S^{2k}\times S^{2k}$ be the diagonal subspace in the product. Let $[\triangle]\in H^{2k}(S^{2k}\times S^{2k},\mathbb{Q})$ be the Poincar\'e dual of $\triangle$. By the Thom isomorphism, there is an exact sequence for the computation of cohomology:
\[
0\to \mathbb{Q}\xrightarrow{\text{diagonal}} H^{2k}(S^{2k}\times S^{2k},\mathbb{Q})\to H^{2k}(S^{2k}\times S^{2k}-\triangle;\mathbb{Q})\to 0.
\]
Let $c\in H^{2k}(S^{2k};\mathbb{Q})$ be the fundamental class and $c_i=p^*_i(c)$. The image of diagonal is the Thom class $c_1+c_2$. Therefore in $H^{2k}(S^{2k}\times S^{2k}-\triangle;\mathbb{Q})$, we have $c_1+c_2=0$. This means that 
\[
H^{2k}(S^{2k}\times S^{2k}-\triangle;\mathbb{Q})=\mathbb{Q}c_1.
\]

Suppose that $f^*(x)=kc_1$ for $k$ an integer. Therefore we will have $g_i^*([\triangle])=kc_1+c_i$. Since the image of $g_i$ misses the diagonal $\triangle$, we have that $g_1^*([\triangle])=kc_1+c_1=0$ and $g_2^*([\triangle])=kc_1+c_2=kc_1-c_1=0$. Since $c_1$ is a generator of $H^{2k}(S^{2k}\times S^{2k}-\triangle;\mathbb{Q})$, we have that $k+1=0$ and $k-1=0$. These two formulas cannot be satisfied at the same time.
\end{proof}

\subsection{Constructing sections when $n>2$}
Define
\[\text{PConf}_{n,k}(S^2)=\{(v_1,x_1,...,x_n)|x_1,...,x_n \text{ be } n \text{ points in } S^2 \text{ and } v_k \text{ be a unit vector at } x_k\}.
\]
This is the total space of a circle bundle by forgetting the vector:
\begin{equation}
S^1\to \text{PConf}_{n,k}(S^2)\to \text{PConf}_{n}(S^2).
\label{vectorsphere}
\end{equation}

\begin{prop}
For $n>2$, the fiber bundle (\ref{vectorsphere}) is a trivial bundle.
\label{trivial}
\end{prop}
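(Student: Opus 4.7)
The plan is to recognize \eqref{vectorsphere} as the pullback of the unit tangent bundle $UTS^2 \to S^2$ along the $k$-th coordinate map $\pi_k : \text{PConf}_n(S^2) \to S^2$, and then to exhibit a continuous global section; since an oriented principal $S^1$-bundle admitting a section is trivial, this suffices.

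The unpacking is immediate from the definitions: a point of $\text{PConf}_{n,k}(S^2)$ is a configuration $(x_1,\ldots,x_n)$ together with a unit tangent vector at $x_k$, so the bundle is literally $\pi_k^{\ast}(UTS^2)$, and producing a section amounts to choosing a unit vector $v_k \in UT_{x_k}S^2$ that depends continuously on the configuration.

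To build such an assignment I would use the triple transitivity of $\text{PSL}_2(\mathbb{C})$ on $\mathbb{CP}^1 = S^2$, which is precisely where the hypothesis $n > 2$ enters. Fix once and for all two auxiliary indices $i, j \in \{1,\ldots,n\}\setminus\{k\}$ with $i \neq j$. For every configuration there is a unique Möbius transformation $\phi$ sending $(x_k, x_i, x_j)$ to $(0, 1, \infty)$, and $\phi$ is a rational function of the entries of the triple and so varies continuously with the configuration. Pushing a fixed nonzero tangent vector at $0$ (say $\partial/\partial z$) forward by $d\phi^{-1}$ gives a nonzero tangent vector at $x_k$, which I would then rescale to unit length in the round spherical metric to produce the desired $v_k$. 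A continuous section $s$ of \eqref{vectorsphere} in hand, the trivialization $\text{PConf}_{n,k}(S^2) \cong \text{PConf}_n(S^2) \times S^1$ is the tautological one built from the principal $S^1$-action on the tangent circles: send $e$ to $(\pi(e), g)$ where $g \in S^1$ is the unique rotation carrying $s(\pi(e))$ to $e$.

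The only thing that truly requires checking is continuity of $\phi$ in the configuration, which is routine from the explicit rational formula for the cross-ratio-type normalization; the rest is formal. There is no real obstacle beyond recognizing the right framework. It is worth noting that for $n = 2$ this construction collapses for lack of auxiliary points, which is consistent with the nonexistence of a section of $f_2(S^2)$ already established above.
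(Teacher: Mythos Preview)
Your argument is correct and genuinely different from the paper's. Both proofs begin by identifying \eqref{vectorsphere} with the pullback $\pi_k^{\ast}(UTS^2)$, but then diverge. The paper proceeds via characteristic classes: it computes $H^2(\text{PConf}_n(S^2);\mathbb{Z})\cong\mathbb{Z}/2$ using Totaro's spectral sequence for configuration spaces, observes that the Euler class of $UTS^2$ is $2[S^2]$, and concludes that the pulled-back Euler class $2p_k^{\ast}[S^2]$ vanishes. You instead construct an explicit section by using the simply transitive $\text{PSL}_2(\mathbb{C})$-action on ordered triples, pushing a fixed tangent vector at $0$ forward by the inverse of the normalizing M\"obius map, and normalizing. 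Your approach is more elementary and more constructive: it avoids spectral sequences and cohomology computations entirely, and it produces a concrete trivialization rather than merely the vanishing of an obstruction class. The paper's approach, on the other hand, fits naturally into the characteristic-class framework and yields along the way the computation of $H^2(\text{PConf}_n(S^2);\mathbb{Z})$, which is of independent interest. Both methods make the hypothesis $n>2$ transparent: yours because you need two auxiliary marked points, the paper's because the relation $p_i^{\ast}[S^2]+p_j^{\ast}[S^2]=0$ (needed to kill $2p_k^{\ast}[S^2]$) only becomes available once there are at least three points.
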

\begin{proof}
$S^1$-bundle is classified by Euler class, i.e. a second cohomology class of the base. We investigate $H^2(\text{PConf}_{n}(S^2);\mathbb{Z})$ first. There is a graded-commutative $\mathbb{Q}$-algebra $[G_{ij}]$ defined in \cite[Theorem 1]{totaro}, where the degree of the generators $G_{ij}$ is 1. By Totaro \cite[Theorem 1]{totaro}, there is a spectral sequence $E_2^{p,q}=H^p((S^2)^n;\mathbb{Q})[G_{ij}]^q$ converging to $H^*(\text{PConf}_n(S^2);\mathbb{Q})$. Since we only compute $H^2$, the differential involved is $d_2:E_2^{0,1}=H^0(S^2;\mathbb{Q})[G_{ij}]\to E_2^{2,0}=H^2(S^2;\mathbb{Q})$. Let $[\triangle_{ij}]\in H^2(S^2;\mathbb{Q})$ be the Poincar\'e dual of $\triangle_{ij}\subset S^2$. By \cite[Theorem 2]{totaro}, the differential $d_2(G_{ij})=[\triangle_{ij}]$. Let $p_i:(S^2)^n\to S^2$ be the projection to the $i$th coordinate and $[S^2]\in H^2(S^2;\mathbb{Z})$ be the generator of $H^2(S^2;\mathbb{Z})$. Therefore we have that
\[
H^2(\text{PConf}_{n}(S^2);\mathbb{Z})=\bigoplus_{i=1}^n\mathbb{Z}p_i^*[S^2]/(p_i^*[S^2]+p_j^*[S^2])\cong \mathbb{Z}/2,
\]
which is generated by $p_k^*[S^2]$ and we have that $2p_k^*[S^2]=0$. The circle bundle \eqref{vectorsphere} is induced from the circle bundle 
\begin{equation}
S^1\to \text{PConf}_{1,1}(S^2)\to S^2
\label{UTBS}
\end{equation}
by the projection to the $k$th coordinate. The bundle \eqref{UTBS} is the unit tangent bundle over $S^2$. Since the Euler characteristic of $S^2$ is 2, the Euler class of \eqref{UTBS} is $eu=2[S^2]\in H^2(S^2;\mathbb{Z})$. Therefore the Euler class of \eqref{vectorsphere} is $p_k^*[eu]=2p_k^*[S^2]=0\in H^2(\text{PConf}_{n}(S^2);\mathbb{Z})$.
\end{proof}

Equip $S^2$ with the spherical metric; i.e. the metric that is induced from the standard embedding $S^2\subset \mathbb{R}^3$. Let 
\[
\epsilon(x_1,...,x_n)=\frac{1}{2} \text{min}_{1\le i<j\le n}\{d(x_i,x_j)\}.
\]
Set $x_0$ to be the image of the $v_k$-flow at time $\epsilon$ from $x_k$; that is
\[
em_{n,k}(S^2):\text{PConf}_{n,k}(S^2)\hookrightarrow\text{PConf}_{n+1}(S^2)
\] 

Composing a continuous section $s:\text{PConf}_{n}(S^2)\to \text{PConf}_{n,k}(S^2)$ of the fiber bundle (\ref{vectorsphere}) with $em_{n,k}(S^2)$ gives a section of the fiber bundle $f_n(S^2)$. 
\begin{defn}[\bf\boldmath Adding a point near $x_k$]
We denote by Add$_{n,k}(S^2)$ the collection of sections of $f_n(S^2)$ consisting of compositions of a section of \eqref{vectorsphere} with $em_{n,k}(S^2)$.
\end{defn}
Notice that there are infinitely many homotopy classes of sections in Add$_{n,k}(S^2)$ and they are classified by sections of \eqref{vectorsphere}.\\
\\
\noindent
{\bf\boldmath A special section for $n=3$. } Since there is a unique Mobius transformation $\phi(x_1,x_2,x_3)$ that transforms $(0,1,\infty)$ to any ordered three points $(x_1,x_2,x_3)$. we have that 
\[
\text{PConf}_3(S^2)      \xrightarrow[\approx]{\phi} \text{PSL}(2,\mathbb{C}).
\]
We can assign any new point $x_0=\phi(x_1,x_2,x_3))(a)$ such that $a\neq 0,1,\infty$.

\subsection{The proof of (2) of Theorem \ref{main}}
In this subsection we prove (2) of Theorem \ref{main}. Let $S_{0,n}$ a sphere with $n$ punctures. Let Diff$(S_{0,n})$ be the orientation-preserving diffeomorphism group of $S_{0,n}$ fixing the $n$ punctures pointwise. While the following is surely known to experts, we could not find this statement or a proof in the literature. I am thus incluing it for completeness. We believe that it follows from Earle-Eells \cite[Theorem 1]{EE} in the punctured case.
\begin{prop}
\label{BD}For $n>2$, we have that 
\[
\text{\normalfont BDiff}(S_{0,n})\cong K(\text{\normalfont PMod}(S_{0,n}),1)\]
\end{prop}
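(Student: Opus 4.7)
The plan is to reduce the statement to the claim that the identity component $\text{Diff}_0(S_{0,n})$ is contractible for $n>2$. Since $\pi_k(BG)=\pi_{k-1}(G)$ for any topological group $G$ and $k\geq 1$, and since $\pi_0(\text{Diff}(S_{0,n}))=\text{PMod}(S_{0,n})$ by definition, contractibility of the components immediately yields $B\text{Diff}(S_{0,n})\simeq K(\text{PMod}(S_{0,n}),1)$.

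To prove this contractibility I would exploit the evaluation fibration
\[
\text{Diff}(S_{0,n}) \longrightarrow \text{Diff}^+(S^2) \xrightarrow{\;\text{ev}\;} \text{PConf}_n(S^2), \qquad \phi \mapsto \bigl(\phi(x_1),\dots,\phi(x_n)\bigr).
\]
That $\text{ev}$ is a locally trivial fiber bundle is a standard application of Palais' slice theorem, and it is surjective since $\text{Diff}^+(S^2)$ acts transitively on ordered $n$-tuples of distinct points. By Smale's theorem the total space is homotopy equivalent to $SO(3)$. For the base I would use the Möbius parametrization
\[
(x_1,\dots,x_n)\ \mapsto\ \bigl(\phi,\ \phi^{-1}(x_4),\dots,\phi^{-1}(x_n)\bigr),\qquad \phi\in\text{PSL}(2,\mathbb{C})\ \text{with}\ \phi(0,1,\infty)=(x_1,x_2,x_3),
\]
which gives a homeomorphism $\text{PConf}_n(S^2)\cong \text{PSL}(2,\mathbb{C})\times \text{PConf}_{n-3}(S^2-\{0,1,\infty\})$. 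The first factor deformation retracts onto $SO(3)$, and the second is a $K(\pi,1)$ — being the pure configuration space of a hyperbolic open surface, it is aspherical by iterating Fadell--Neuwirth.

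Under these identifications the orbit map $SO(3)\simeq \text{Diff}^+(S^2)\to \text{PConf}_n(S^2)$ is homotopic to the inclusion of the first factor. Consequently $\text{ev}_*$ is an isomorphism on $\pi_k$ for $k\geq 2$ and an injection on $\pi_1$. Feeding this into the long exact sequence of the fibration forces $\pi_k(\text{Diff}(S_{0,n}))=0$ for every $k\geq 1$, which is exactly the contractibility of components reduced to in the first paragraph.

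The main technical obstacle is the Palais-style argument showing that $\text{ev}$ is genuinely a locally trivial fiber bundle (so that the long exact sequence is available), together with the check that the orbit map $SO(3)\hookrightarrow \text{PConf}_n(S^2)$ agrees up to homotopy with the first-factor inclusion under the Möbius decomposition. These two compatibilities are what make the LES collapse in the desired way; everything else is bookkeeping with the homotopy groups of $SO(3)$ and of the aspherical second factor.
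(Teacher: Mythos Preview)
Your proposal is correct and takes a genuinely different route from the paper. The paper proceeds \emph{inductively}: starting from Smale's theorem $\text{Diff}^+(S^2)\simeq SO(3)$, it uses the one-point evaluation fibration
\[
\text{Diff}(S_{0,n+1})\to \text{Diff}(S_{0,n})\to S_{0,n}
\]
to first compute $\text{Diff}(S_{0,1})\simeq SO(2)$ and $\text{Diff}(S_{0,2})\simeq SO(2)$, and then uses the long exact sequence together with $\text{PMod}(S_{0,3})=1$ to kill all higher homotopy of $\text{Diff}(S_{0,3})$; the remaining cases follow by continuing the induction. Your argument instead evaluates at all $n$ points at once, trading the induction for the single M\"obius decomposition $\text{PConf}_n(S^2)\cong \text{PSL}(2,\mathbb{C})\times \text{PConf}_{n-3}(S^2\setminus\{0,1,\infty\})$ and the identification of the orbit map with the first-factor inclusion. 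The paper's approach keeps each fibration elementary (fiber a single punctured sphere) but requires the auxiliary computations for $n=1,2$ and the input $\text{PMod}(S_{0,3})=1$; your approach is more conceptual and one-shot, at the cost of checking the compatibility of the $SO(3)$-orbit map with the M\"obius splitting. Both rest on Smale's theorem and amount to the same Earle--Eells-type statement.
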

\begin{proof}
We only need to prove that the homotopy group $\pi_k(\text{Diff}(S_{0,n}))=0$ for $k>0$. For $n=0$, by Smale \cite[Theorem A]{MR0112149}, Diff$(S^2)\simeq \text{SO}(3)$. By fiber bundle
\begin{equation}
\text{Diff}(S_{0,n+1})\to \text{Diff}(S_{0,n})\to S_{0,n},
\label{sph}
\end{equation}
we deduce that Diff$(S_{0,1})\simeq \text{SO}(2)$ and Diff$(S_{0,2})\simeq \text{SO}(2)$. The long exact sequence of homotopy groups of the fiber bundle (\ref{sph}) is 
\[
1\to \pi_1(\text{Diff}(S_{0,3}))\to \pi_1( \text{Diff}(S_{0,2}))\to \pi_1(S_{0,2})\to \text{PMod}_{0,3}\to \text{PMod}_{0,2}\to 1.
\]
However we know that PMod$_{0,3}=1$ (see \cite[Proposition 2.3]{BensonMargalit}), we get that $\pi_1(\text{Diff}(S_{0,3}))=0$ and also $\pi_i(\text{Diff}(S_{0,3}))=0$ for $i>1$. The other cases are the same.
\end{proof}
Let $PB_{n}(S^2)=\pi_1(\text{PConf}_n(S^2))$. Now we are ready to prove (2) of Theorem \ref{main}.
\begin{proof}[\bf\boldmath Proof of (2) of Theorem \ref{main}]
Let 
\[
S_{0,n+1}\to \text{UDiff}(S_{0,n+1})\xrightarrow{u_{n+1}} \text{BDiff}(S_{0,n+1})\]
be the universal $S_{0,n+1}$-bundle in the sense that any $S^2$ bundle with $n+1$ sections 
\[S_{0,n+1}\to E\to B\]
is the pullback from $u_{n+1}$ by a continuous map $f:B\to \text{BDiff}(S_{0,n+1})$. By Proposition \ref{BD}, $\text{BDiff}(S_{0,n+1})\cong K(\text{PMod}(S_{0,n+1}),1)$. This means that $\text{UDiff}(S_{0,n+1})$ is also a $K(\pi,1)$-space. Therefore $S_{0,n+1}$-bundles are determined by their monodromy representations and the sections of an $S_{0,n+1}$-bundle are also determined by the maps on fundamental groups. A splitting of the following exact sequence gives us a section of the fiber bundle $f_n(S^2)$.
\[
1\to F_n\to PB_{n+2}(S^2)\xrightarrow{f_{n}(S^2)_*} PB_{n+1}(S^2)\to 1.
\]

We have the following diagram:
\[
\xymatrix{
S_{0,n+1}\ar[r]\ar[d]& \text{PConf}_{n+1}(S_{0,1})\ar[r]^{f_{n}(S_{0,1})}\ar[d] &\text{PConf}_{n}(S_{0,1})\ar[d]\\
S_{0,n+1}\ar[r] &\text{PConf}_{n+2}(S^2)\ar[r]^{f_{n+1}(S^2)}\ar[d]^{p_{n+1}} &\text{PConf}_{n+1}(S^2)\ar[d]^{p_{n+1}}  \\
&S^2\ar[r]&S^2.
}
\]

By the long exact sequence of homotopy groups of the fiber bundle 
\[\text{PConf}_n(S_{0,1})\to \text{PConf}_{n+1}(S^2)\to S^2,
\]
we have that $PB_{n+1}(S^2)=PB_n/Z$ where $Z$ denotes the center of $PB_n$ and is generated by the Dehn twist about the boundary of $D_n$; see \cite[Page 247]{BensonMargalit}. Therefore a section of $f_{n}(S_{0,1})$ induced from a section of $f_{n+1}(S^2)$ satisfies that $f_{n}(S_{0,1})_*$ maps the center to the center.

Since $S_{0,1}\approx \mathbb{R}^2$, the section problem for $f_{n}(S_{0,1})$ has been fully discussed in Section 2. Every section of $f_{n+1}(S^2)$ induces a section of $f_{n}(\mathbb{R}^2)$, thus we could use the classification of sections of $f_{n}(\mathbb{R}^2)$ to study the sections of $f_{n+1}(S^2)$. Let $s:PB_{n+1}(S^2)\to PB_{n+2}(S^2)$ be a splitting of $f_{n+1}(S^2)_*$ such that $f_{n+1}(S^2)_*\circ s=id$. By (1) of Theorem \ref{main}, we break the discussion into the following two cases according to the sections of $f_n(S_{0,1})$. \\
\\
{\bf\boldmath Case 1: the section of $f_{n}(S_{0,1})$ is adding a point near $x_k$.} In this case, $s(PB_{n+1}(S^2))$ fixes a curve $c$ around $\{x_0,x_k\}$. Then the image lies in the stabilizer of $c$. The stabilizer of $c$ in $\text{PMod}(S_{0,n+2})$ is $\text{PMod}(D_{n})\cong PB_n$. The boundary of $D_n$ is $c$ surrounding $\{x_1,...,x_{k-1},x_{k+1},...,x_{n+1}\}$. On the other hand by Proposition \ref{trivial} the circle bunlde
\[
S^1\to \text{PConf}_{n,k}(S^2)\to \text{PConf}_{n}(S^2)
\]
is trivial, we have that 
\[
\pi_1(\text{PConf}_{n+1,k}(S^2))\cong\mathbb{Z}\times \pi_1(\text{PConf}_{n+1}(S^2))\cong \mathbb{Z}\times PB_{n+1}(S^2)\cong PB_n.
\]
The last isomorphism is coming from the splitting of the following exact sequence; see \cite[Page 252]{BensonMargalit}.
\[
1\to Z\to PB_n\to PB_n/Z\to 1.
\]

Since the $\mathbb{Z}$ component of $\pi_1(\text{PConf}_{n+1,k}(S^2))$ is mapped to the Dehn twist about a curve $d$ surrounding $\{x_0,x_k\}$, it means that $d$ also surrounds $\{x_1,...,x_{k-1},x_{k+1},...,x_{n+1}\}$. Therefore we have that $f_{n+1}(S^2)$ is adding a point near $x_k$.\\
\\
{\bf\boldmath Case 2: the section of $f_{n}(S_{0,1})$ is adding a point near $\infty$.} In this case, $s(PB_{n+1}(S^2))$ fixes a curve $c$ around $\{x_1,...,x_n\}$. Then the image lies in the stabilizer of $c$. The stabilizer of $c$ in $\text{Mod}(S_{0,n+2})$ is $\text{PMod}(D_{n})\cong PB_n$. The boundary of $D_n$ is $c$ surrounding $\{x_1,...,x_n\}$. On the other hand by Proposition \ref{trivial} the circle bunlde
\[
S^1\to \text{PConf}_{n+1,{n+1}}(S^2)\to \text{PConf}_{n+1}(S^2).
\]
is trivial, we have that 
\[
\pi_1(\text{PConf}_{n+1,n+1}(S^2))\cong\mathbb{Z}\times \pi_1(\text{PConf}_{n+1}(S^2))\cong \mathbb{Z}\times PB_{n+1}(S^2)\cong PB_n.
\]
Since the $\mathbb{Z}$ component of $\pi_1(\text{PConf}_{n+1,k}(S^2))$ is mapped to the Dehn twist about a curve $d$ surrounding $\{x_0,x_{n+1}\}$, it means that $d$ also surrounds $\{x_1,...,x_{n}\}$. Therefore we have that $f_{n+1}(S^2)$ is adding a point near $x_{n+1}$.
\end{proof}

\subsection{The unordered case}

\begin{proof}[\bf Proof of (2) of Corollary \ref{main11}]
By the same argument as the proof of (1) of Corollary \ref{main11}, we show that none of the sections of 
\[
f_{n}(S^2): \text{PConf}_{n+1}(S^2)\to \text{PConf}_n(S^2)
\]
can be extended to a section of 
\[
F_{n}(S^2): \text{PConf}_{n+1}(S^2)/\Sigma_n\to \text{PConf}_n(S^2)/\Sigma_n.
\]
\end{proof}

\subsection{The exceptional cases}\label{exception}
For the special cases $n=3$, we have the following classification.
\begin{thm}[\bf Classification of sections of $f_3(S^2)$ and  $F_3(S^2)$]
There is a unique section for the fiber bundle $f_3(S^2)$ up to homotopy. There is no section for the bundle $F_3(S^2)$.
\end{thm}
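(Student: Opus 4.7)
The plan is to use the Möbius trivialization of $f_3(S^2)$ to reduce both statements to questions about maps into $Y := S^2 \setminus \{0, 1, \infty\}$. Sending $(x_0, x_1, x_2, x_3) \mapsto ((x_1, x_2, x_3), \phi(x_1, x_2, x_3)^{-1}(x_0))$, where $\phi(x_1, x_2, x_3) \in \text{PSL}(2, \mathbb{C})$ is the unique Möbius transformation taking $(0, 1, \infty)$ to $(x_1, x_2, x_3)$, gives a trivialization $\text{PConf}_4(S^2) \cong \text{PConf}_3(S^2) \times Y$. Sections of $f_3(S^2)$ then correspond to continuous maps $\text{PConf}_3(S^2) \to Y$. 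Since $\text{PConf}_3(S^2) \cong \text{PSL}(2, \mathbb{C}) \simeq \text{SO}(3)$ has $\pi_1 = \mathbb{Z}/2$ while $Y$ is a $K(F_2, 1)$ with $F_2$ torsion-free, $[\text{PConf}_3(S^2), Y] = \text{Hom}(\mathbb{Z}/2, F_2)/\text{conj.} = \{1\}$. Combined with the existence of the Möbius sections $s_a(x_1, x_2, x_3) = (\phi(x_1, x_2, x_3)(a), x_1, x_2, x_3)$ for any $a \in Y$, this gives exactly one homotopy class of section of $f_3(S^2)$.

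For the non-existence of a section of $F_3(S^2)$, suppose $\bar s$ is a section. Pulling back along the $\Sigma_3$-cover $\text{PConf}_3(S^2) \to \text{Conf}_3(S^2)$ yields a strictly $\Sigma_3$-equivariant section of $f_3(S^2)$, and via the trivialization a $\Sigma_3$-equivariant continuous map $a : \text{PConf}_3(S^2) \to Y$, where $\Sigma_3$ acts on $Y$ by the anharmonic Möbius group (the subgroup of $\text{PSL}(2, \mathbb{C})$ permuting $\{0, 1, \infty\}$). This $a$ descends to $\bar a : \text{Conf}_3(S^2) \to Y/\Sigma_3$.

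The key input is uniformization: $Y \cong \mathbb{H}/\Gamma(2)$ with $\Gamma(2) \cong F_2$ the principal congruence subgroup of level $2$ in $\text{PSL}(2, \mathbb{Z})$, and the anharmonic action of $\Sigma_3$ on $Y$ coincides with the residual action of $\text{PSL}(2, \mathbb{Z})/\Gamma(2) \cong \Sigma_3$. Hence $Y/\Sigma_3 \cong \mathbb{H}/\text{PSL}(2, \mathbb{Z})$ is the modular orbifold, whose orbifold fundamental group is $\text{PSL}(2, \mathbb{Z}) \cong \mathbb{Z}/2 \ast \mathbb{Z}/3$. Let $\bar a_\ast : B_3(S^2) \to \text{PSL}(2, \mathbb{Z})$ be the induced map. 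A direct lifting argument shows that its composition with $\text{PSL}(2, \mathbb{Z}) \twoheadrightarrow \Sigma_3$ equals the natural surjection $B_3(S^2) \twoheadrightarrow \Sigma_3$: if $\gamma$ is a loop in $\text{Conf}_3(S^2)$ with a lift $\tilde\gamma$ to $\text{PConf}_3(S^2)$ having endpoint shift $\sigma \in \Sigma_3$, then by equivariance $a \circ \tilde\gamma$ has endpoint shift $\sigma$ in $Y$, and its projection to $Y/\Sigma_3$ is the loop $\bar a \circ \gamma$. So $\bar a_\ast(B_3(S^2))$ surjects onto $\Sigma_3$ and thus has order divisible by $6$.

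But $B_3(S^2)$ is the dicyclic group of order $12$, since $PB_3(S^2) = \pi_1(\text{PConf}_3(S^2)) = \mathbb{Z}/2$ and $[B_3(S^2) : PB_3(S^2)] = 6$, so $\bar a_\ast(B_3(S^2))$ is a finite subgroup of $\text{PSL}(2, \mathbb{Z}) = \mathbb{Z}/2 \ast \mathbb{Z}/3$. By the Kurosh subgroup theorem, every finite subgroup of a free product of finite groups is conjugate into a factor, so any finite subgroup of $\mathbb{Z}/2 \ast \mathbb{Z}/3$ is cyclic of order $1$, $2$, or $3$. This contradicts divisibility by $6$, so no such section $\bar s$ can exist. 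The main obstacle I expect is the compatibility of $\bar a_\ast$ with the surjections onto $\Sigma_3$; once this diagram chase using equivariance of $a$ is carried out, the contradiction is immediate from Kurosh together with standard facts about $\text{PSL}(2, \mathbb{Z})$.
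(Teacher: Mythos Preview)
Your argument is correct. For the first assertion (uniqueness of sections of $f_3(S^2)$) your approach coincides with the paper's: both trivialize the bundle and reduce to classifying maps $\text{PConf}_3(S^2)\to S_{0,3}\simeq K(F_2,1)$, which are all nullhomotopic since the domain has finite $\pi_1$ and $F_2$ is torsion-free. (You correctly compute $\pi_1(\text{PConf}_3(S^2))=\pi_1(\text{PSL}(2,\mathbb{C}))=\mathbb{Z}/2$; the paper states it is trivial, but the conclusion is unaffected.)

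For the non-existence of sections of $F_3(S^2)$ your route is genuinely different from the paper's. The paper identifies the relevant extension with $1\to \text{PMod}(S_{0,4})\to \text{Mod}(S_{0,3,1})\to\Sigma_3\to 1$ and applies Nielsen realization: any finite subgroup of $\text{Mod}(S_{0,3,1})$ lifts to a finite group of diffeomorphisms of $S^2$ fixing a point, hence is cyclic, so $\Sigma_3$ cannot embed. You instead encode the equivariant map $a$ as a homomorphism $\bar a_*\colon B_3(S^2)\to \text{PSL}(2,\mathbb{Z})$ over $\Sigma_3$ and use the free-product structure $\text{PSL}(2,\mathbb{Z})\cong \mathbb{Z}/2*\mathbb{Z}/3$ together with Kurosh to bound the order of finite subgroups by $3$. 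Your approach trades a geometric input (Nielsen realization) for a purely algebraic one (Kurosh), and it also handles cleanly the fact that $B_3(S^2)$ has order $12$ rather than $6$. The one place to tighten is the construction of $\bar a_*$: the quotient $Y/\Sigma_3$ is contractible as a space, so $\bar a_*$ is really the map on $\pi_1$ of the Borel constructions $\text{PConf}_3(S^2)\times_{\Sigma_3}E\Sigma_3 \to Y\times_{\Sigma_3}E\Sigma_3$; since $\Sigma_3$ acts freely on the source and $Y$ is aspherical with $\Sigma_3$ acting via $\text{PSL}(2,\mathbb{Z})/\Gamma(2)$, this yields exactly your homomorphism $B_3(S^2)\to\text{PSL}(2,\mathbb{Z})$ commuting with the projections to $\Sigma_3$. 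Your lifting description computes this map explicitly, so the argument goes through.
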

\begin{proof}
By Proposition \ref{BD}, we have that BDiff$(S_{0,3})\cong K(\text{PMod}(S_{0,3}),1)$. Since PMod$(S_{0,3})=1$, the classifying space BDiff$(S_{0,3})$ is contractible. Therefore every $S^2$-bundle with $3$ sections is a trivial bundle. Thus $f_3(S^2)$ is a trivial bundle. Therefore, a section of $f_3(S^2)$ is determined by a map PConf$_3(S^2)\to S_{0,3}$. Since $S_{0,3}\cong K(F_2,1)$, a map PConf$_3(S^2)\to S_{0,3}$ up to homotopy is determined by Hom$(PB_3(S^2),F_2)$ up to conjugation. However $PB_3(S^2)=PB_2/Z=1$ implying that Hom$(PB_3(S^2),F_2)=1$. Therefore, there is a unique section up to homotopy. For the unordered case $F_3(S^2)$, let Mod$(S_{0,3,1})$ be the mapping class group of $S^2$ fixing a set of 3 points and a set of 1 point. There is an exact sequence
\begin{equation}
1\to \text{PMod}(S_{0,4})\to \text{Mod}(S_{0,3,1})\to \Sigma_3 \to 1.
\label{Sigma3}
\end{equation}
Since $\pi_1(\text{PConf}_3(S^2)/\Sigma_3)\cong \Sigma_3$ and $\pi_1(S_{0,3})\cong \text{PMod}(S_{0,4})$, we have that $\text{Mod}(S_{0,3,1})=\pi_1(\text{PConf}_4(S^2)/\Sigma_3)$. Therefore, the section of $F_3(S^2)$ is the determined by the splittings of the exact sequence \eqref{Sigma3}.

Let $\overline{\text{Diff}}(S_{0,3,1})$ be the orientation-preserving diffeomorphism group of $S^2$ fixing a set of 3 points and a set of 1 point. By definition there is a map $\rho: \overline{\text{Diff}}(S_{0,3,1})\to \text{Mod}(S_{0,3,1})$ which induces isomorphism on $\pi_0$. A version of the Nielsen Realisation Theorem (e.g. \cite[Theorem 7.2]{BensonMargalit} and \cite{Wolpert}) tells us that a finite subgroup of Mod$(S_{0,3,1})$ has a lift to $\overline{\text{Diff}}(S_{0,3,1})$. However every finite subgroup of $\overline{\text{Diff}}(S_{0,3,1})$ is cyclic because $\overline{\text{Diff}}(S_{0,3,1})$ fixes a point. Therefore every finite subgroup of Mod$(S_{0,3,1})$ is cyclic. Since $\Sigma_3$ is noncyclic, \eqref{Sigma3} does not split.
\end{proof}
For the special cases $n=4$, we have the following classification.
\begin{thm}[\bf Classification of sections of $f_4(S^2)$]
The sections of fiber bundle $f_4(S^2)$ correspond to the splittings of the exact sequence
\[1\to F_3 \to PB_5(S^2)\xrightarrow{f_4(S^2)_*} F_2\to 1\]
up to conjugation.
\end{thm}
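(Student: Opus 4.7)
The plan is to run the same classifying-space argument used in the first half of the proof of part (2) of Theorem \ref{main}, now for the one remaining case $n=4$ (which was excluded from the explicit classification given there).

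First I would identify the relevant fundamental groups. The fiber of $f_4(S^2)$ is $S_{0,4}$, which is a $K(F_3,1)$ and has pure mapping class group $\text{PMod}(S_{0,4})\cong F_2$. Applying the formula $PB_{n+1}(S^2)=PB_n/Z$ established in the preceding subsection to $n=3$ yields $PB_4(S^2)=(F_2\times\mathbb{Z})/\mathbb{Z}=F_2$, so $\pi_2(\text{PConf}_4(S^2))=0$ and the long exact sequence of the fibration $f_4(S^2)$ collapses to the stated short exact sequence
\[
1\to F_3\to PB_5(S^2)\xrightarrow{f_4(S^2)_*}F_2\to 1.
\]
Moreover, the monodromy representation $PB_4(S^2)\to\text{PMod}(S_{0,4})$ is an isomorphism of $F_2$'s, since for $n\ge 4$ the center of $PB_n(S^2)$ is trivial.

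Next, by Proposition \ref{BD}, $\text{BDiff}(S_{0,4})\simeq K(F_2,1)$, and the universal $S_{0,4}$-bundle
\[
S_{0,4}\to\text{UDiff}(S_{0,4})\to\text{BDiff}(S_{0,4})
\]
has aspherical fiber and aspherical base. Its long exact sequence of homotopy groups then forces $\pi_k(\text{UDiff}(S_{0,4}))=0$ for $k\ge 2$, so $\text{UDiff}(S_{0,4})\simeq K(\Gamma,1)$ where $\Gamma$ is the extension of $F_2$ by $F_3$ read off from the fibration. Applying the five-lemma to the map of short exact sequences induced by the classifying map $\phi:\text{PConf}_4(S^2)\to\text{BDiff}(S_{0,4})$, which is the identity on the $F_3$ fiber and an isomorphism on the $F_2$ base, identifies $\Gamma=PB_5(S^2)$.

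Finally I would translate sections into splittings. The bundle $f_4(S^2)$ is the pullback of the universal bundle along $\phi$, and a continuous section of $f_4(S^2)$ is precisely a lift $\tilde\phi$ of $\phi$ through the projection $\text{UDiff}(S_{0,4})\to\text{BDiff}(S_{0,4})$. Because the target $\text{UDiff}(S_{0,4})$ is a $K(PB_5(S^2),1)$, homotopy classes of such lifts are classified by conjugacy classes of homomorphisms $\pi_1(\text{PConf}_4(S^2))=F_2\to PB_5(S^2)$ whose composition with $f_4(S^2)_*$ is the identity, which are exactly the splittings of the displayed exact sequence up to conjugation. The main point to watch is that $\text{PConf}_4(S^2)$ itself fails to be aspherical (the Fadell--Neuwirth fibration over $\text{PConf}_3(S^2)\simeq\text{PSL}(2,\mathbb{C})$ gives $\pi_3(\text{PConf}_4(S^2))\cong\mathbb{Z}$), but the argument still goes through because asphericity of the target $\text{UDiff}(S_{0,4})$ alone suffices: $[X,K(\pi,1)]$ depends only on $\pi_1(X)$. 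This is also why the theorem stops at the abstract bijection rather than giving a geometric description: removing a fixed point reduces splittings of this sequence to sections of $f_3(\mathbb{R}^2)$, which lies outside the range $n>3$ of Theorem \ref{main}(1).
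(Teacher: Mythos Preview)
Your proof is correct and follows essentially the same route as the paper's. The paper's argument is just the two-line observation that the Birman exact sequence with $PB_3(S^2)=1$ gives $PB_4(S^2)\cong\pi_1(S_{0,3})\cong F_2$, and then Proposition~\ref{BD} reduces sections to splittings up to conjugation; you have unpacked this second step in full (asphericity of $\text{UDiff}(S_{0,4})$, five-lemma identification of its fundamental group with $PB_5(S^2)$, and the $[X,K(\pi,1)]$ principle), and your remark that asphericity of the \emph{target} is what matters, not of $\text{PConf}_4(S^2)$, is a point the paper leaves implicit.
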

\begin{proof}
We have the following Birman exact sequence; see \cite[Theorem 4.6]{BensonMargalit}.
\[
1\to \pi_1(S_{0,3})\to PB_4(S^2)\xrightarrow{f_3(S^2)_*} PB_3(S^2)\to 1.\]
Since $PB_3(S^2)=1$, we have that $PB_4(S^2)=\pi_1(S_{0,3})\cong F_2$. By Proposition \ref{BD}, the sections of $f_4(S^2)$ is determined by the splittings of the following Birman exact sequence up to conjugaction.
\[
1\to \pi_1(S_{0,4})\to PB_5(S^2)\xrightarrow{f_4(S^2)_*} PB_4(S^2)\to 1.\qedhere
\] 
\end{proof}
\section{The case when $S=S_g$ a closed surface of genus $g>1$}
In this section, we prove Theorem \ref{main}(3).  Let $S_g^n$ be the product of $n$ copies of $S_g$. There is a natural embedding $\text{PConf}_n(S_g)\subset S_g^n$. Let $p_i:\text{PConf}_n(S_g)\to S_g$ be the projection onto the $i$th component. Denote by $\triangle_{ij}\approx S_g^{n-1}\subset S_g^n$ the $ij$th diagonal subspace of $S_g^n$; i.e., $\triangle_{ij}$ consists of points in $S_g^n$ such that the $i$th and $j$th coordinates are equal. Let $H_i:=p_i^*H^1(S_g;\mathbb{Q})$ and let $[S_g]$ be the fundamental class in $H^2(S_g;\mathbb{Q})$. Now, we display the computation of $H^*(\text{PConf}_n(S_{g});\mathbb{Q})$ from \cite{lei1}. 
\begin{lem}
(1) For $g>1$ and $n>0$,
\begin{equation}
H^1(\text{\normalfont PConf}_n(S_{g});\mathbb{Q})\cong H^1(S_{g}^n;\mathbb{Q})\cong \bigoplus_{i=1}^{n}H_i.
\label{EX1}
\end{equation}
(2)We have an exact sequence
\begin{equation}
1\to \oplus_{1\le i<j\le n}\mathbb{Q}[G_{ij}] \xrightarrow{\phi} H^2(S_{g}^n;\mathbb{Q})\cong \bigoplus_{i=1}^{n}\mathbb{Q}p_i^*[S_g]\oplus \bigoplus_{i\neq j} H_i\otimes H_j\xrightarrow{Pr} H^2(\text{PConf}_n(S_{g});\mathbb{Q}),
\label{EX2}
\end{equation}
where $\phi(G_{ij})=[\triangle_{ij}]\in H^2(S_{g}^n;\mathbb{Q})$ is the Poincar\'e dual of the diagonal $\triangle_{ij}$.
\label{1}
\end{lem}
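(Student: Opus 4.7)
The plan is to apply the Totaro spectral sequence of \cite{totaro}, exactly as was done in the $S^2$ case in Section 5. This is a first-quadrant spectral sequence with $E_2^{p,q}=H^p(S_g^n;\mathbb{Q})[G_{ij}]^q$ converging to $H^{p+q}(\text{PConf}_n(S_g);\mathbb{Q})$; the generators $G_{ij}$ for $1\le i<j\le n$ have degree $1$ (since $\dim S_g -1 = 1$), and the $d_2$ differential is determined by $d_2(G_{ij})=[\triangle_{ij}]$. Both parts of the lemma reduce to showing that the $\binom{n}{2}$ classes $\{[\triangle_{ij}]\}_{1\le i<j\le n}$ are linearly independent in $H^2(S_g^n;\mathbb{Q})$.

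For this linear independence, I would use the K\"unneth decomposition together with the standard formula $[\triangle]=p_1^*[S_g]+p_2^*[S_g]+\omega$ in $H^2(S_g\times S_g;\mathbb{Q})$, where $\omega\in H^1(S_g;\mathbb{Q})\otimes H^1(S_g;\mathbb{Q})$ is the nonzero class coming from the cup-product pairing. Pulling back along the projection $S_g^n\to S_g\times S_g$ onto the $i$-th and $j$-th coordinates, the component of $[\triangle_{ij}]$ in the K\"unneth summand $H_i\otimes H_j$ is nonzero. Since distinct pairs $(i,j)$ land in distinct summands, the classes are linearly independent. This is the only step that uses $g>1$: it ensures $H^1(S_g;\mathbb{Q})\neq 0$, and hence $\omega\neq 0$.

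Given this, part (1) follows: the only contributions to $H^1$ come from $E_\infty^{1,0}$ and $E_\infty^{0,1}$. The differential $d_2\colon E_2^{0,1}=\bigoplus_{i<j}\mathbb{Q}G_{ij}\to E_2^{2,0}$ is injective by the linear independence above, so $E_\infty^{0,1}=0$, while $E_\infty^{1,0}=E_2^{1,0}=H^1(S_g^n;\mathbb{Q})$ since the outgoing differential has zero target. Hence $H^1(\text{PConf}_n(S_g);\mathbb{Q})\cong \bigoplus_i H_i$.

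For part (2), the map $\text{Pr}$ induced by the inclusion $\text{PConf}_n(S_g)\hookrightarrow S_g^n$ factors through the spectral sequence as $H^2(S_g^n;\mathbb{Q})=E_2^{2,0}\twoheadrightarrow E_\infty^{2,0}\hookrightarrow H^2(\text{PConf}_n(S_g);\mathbb{Q})$, so $\ker(\text{Pr})=\text{Im}(d_2\colon E_2^{0,1}\to E_2^{2,0})$, which is precisely the subspace spanned by the $[\triangle_{ij}]$, i.e., the image of $\phi$. Combined with the injectivity of $\phi$ established in the second paragraph, this yields the claimed exactness. The main technical obstacle is the linear independence computation; the rest is formal from convergence of the spectral sequence.
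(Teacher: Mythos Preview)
Your argument is correct. The paper does not actually prove this lemma but simply cites \cite[Lemma 3.1]{lei1}; your use of the Totaro spectral sequence is exactly parallel to the paper's own computation for $S^2$ in Proposition~\ref{trivial}, and is presumably how the cited reference proceeds as well. One minor remark: the linear-independence step only requires $g\ge 1$ (so that $H^1(S_g;\mathbb{Q})\neq 0$ and hence $\omega\neq 0$), not $g>1$; the stronger hypothesis in the statement is inherited from its later application via Theorem~\ref{3} rather than being needed here.
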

\begin{proof}
See \cite[Lemma 3.1]{lei1}.\end{proof}

Let $\{a_k,b_k\}_{k=1}^g$ be a symplectic basis for $H^1(S_g;\mathbb{Q})$. For $1\le i,j \le m$, we denote 
\[M_{i,j}=\sum_{k=1}^{n} p_i^*a_k \otimes p_j^*b_k-p_i^*b_k\otimes p_j^*a_k.\]

\begin{lem}
The diagonal element $[\triangle_{ij}]=p_i^*[S_{g}]+p_j^*[S_{g}]+M_{ij}\in \bigoplus_{i=1}^{n}\mathbb{Q}p_i^*[S_g]\oplus \bigoplus_{i\neq j} H_i\otimes H_j\cong H^2(S_{g}^n;\mathbb{Q})$. 
\label{2}
\end{lem}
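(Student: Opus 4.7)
This is a standard Poincar\'e-duality computation of the class of a diagonal, reduced to the two-factor case.

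First I would reduce to $n=2$. Let $p_{ij}\colon S_g^n \to S_g\times S_g$ be the projection onto the $i$th and $j$th coordinates. Then $\triangle_{ij} = p_{ij}^{-1}(\triangle)$ transversely, so $[\triangle_{ij}] = p_{ij}^{*}[\triangle]$ in $H^2(S_g^n;\mathbb{Q})$. Because $p_{ij}$ factors the projections $p_i$ and $p_j$, once I establish the two-factor formula
\[
[\triangle] \;=\; 1\otimes [S_g] \;+\; [S_g]\otimes 1 \;+\; \sum_{k=1}^{g}\bigl(a_k\otimes b_k - b_k\otimes a_k\bigr)
\]
in $H^2(S_g\times S_g;\mathbb{Q})$, pulling back via $p_{ij}$ will immediately give the claimed expression $p_i^*[S_g] + p_j^*[S_g] + M_{ij}$.

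Second I would pin down the two-factor formula. By K\"unneth, $H^2(S_g\times S_g;\mathbb{Q})$ decomposes as $(H^0\otimes H^2)\oplus(H^1\otimes H^1)\oplus(H^2\otimes H^0)$; expand $[\triangle]$ as an unknown linear combination of the basis elements $1\otimes[S_g]$, $[S_g]\otimes 1$, $a_k\otimes b_l$, $b_k\otimes a_l$, $a_k\otimes a_l$, $b_k\otimes b_l$. The coefficients are determined by the defining Poincar\'e-duality property of the diagonal class: for any $\alpha,\beta\in H^{*}(S_g)$ with $|\alpha|+|\beta|=2$ one has
\[
\int_{S_g\times S_g}[\triangle]\cup(\alpha\times\beta) \;=\; \int_{S_g}\alpha\cup\beta.
\]
Running this with $(\alpha,\beta)\in\{([S_g],1),(1,[S_g])\}$ forces the coefficients of $[S_g]\otimes 1$ and $1\otimes[S_g]$ to be $1$. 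Running it with $(\alpha,\beta)$ ranging over pairs $(a_k,b_l)$, $(b_k,a_l)$, $(a_k,a_l)$, $(b_k,b_l)$, and using the symplectic relations $a_k\cup b_l=\delta_{kl}[S_g]$, $a_k\cup a_l = b_k\cup b_l = 0$ together with the K\"unneth sign rule $(\alpha\times\beta)\cup(\gamma\times\delta)=(-1)^{|\beta||\gamma|}(\alpha\cup\gamma)\times(\beta\cup\delta)$, kills the $a\otimes a$ and $b\otimes b$ terms and produces the opposite signs on $a_k\otimes b_k$ versus $b_k\otimes a_k$. This is precisely the recipe that sums to $M_{12}$.

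The main obstacle, and only real bookkeeping, is keeping the signs straight in the $H^1\otimes H^1$ block, since the $(-1)^{|\beta||\gamma|}$ sign in the K\"unneth cup product interacts with the anti-symmetry of the intersection pairing on $H^1(S_g;\mathbb{Q})$. Once a consistent orientation and K\"unneth convention is fixed, the sign comes out so that $a_k\otimes b_k$ and $b_k\otimes a_k$ appear with opposite coefficients, yielding $M_{ij}$ after pullback by $p_{ij}$. Pulling the full formula back by $p_{ij}^{*}$ (noting $p_{ij}^{*}(1\otimes[S_g]) = p_j^*[S_g]$, $p_{ij}^{*}([S_g]\otimes 1) = p_i^*[S_g]$, and $p_{ij}^{*}(a_k\otimes b_k)=p_i^*a_k\cup p_j^*b_k$, etc.) then yields the stated identity.
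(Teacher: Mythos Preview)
Your argument is correct and is the standard Poincar\'e-duality computation of the diagonal class. The paper itself does not actually prove this lemma: it simply cites \cite[Lemma 3.2]{lei1}. So there is no ``paper's own proof'' to compare against beyond that reference; your reduction to the two-factor case via $p_{ij}^{*}$ and determination of the coefficients by pairing $[\triangle]$ against K\"unneth basis elements is exactly how one would establish the cited result.
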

\begin{proof}
See \cite[Lemma 3.2]{lei1}.\end{proof}

The following lemma is the classification of homomorphisms $\pi_1(\text{PConf}_n(S_g))\to \pi_1(S_g)$ from \cite{lei1}.
\begin{thm}[{\bf \boldmath The classification of homomorphisms $\pi_1(\text{PConf}_n(S_g))\to \pi_1(S_g)$}]
Let $g>1$ and $n>0$. Let $R: \pi_1(\text{PConf}_n(S_g))\to \pi_1(S_g)$ be a homomorphism. The followings hold:\\
\\
(1)If $R$ is surjective, then $R=A\circ p_{i*}$ for some $i$ and $A$ an automorphism of $\pi_1(S_g)$.\\ 
\\
(2)If Image$(R)$ is not a cyclic group, the homomorphism $\pi_1(\text{PConf}_n(S_g))\to \pi_1(S_g)$ factors through $p_{i*}$ for some $i$.
\label{3}
\end{thm}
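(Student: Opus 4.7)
The plan is to exploit the explicit description of $H^*(\text{PConf}_n(S_g);\mathbb{Q})$ given in Lemmas \ref{1}--\ref{2}, together with the nilpotence of $H^1$ classes under cup product. Since $S_g$ is aspherical, the homomorphism $R$ is realized (up to homotopy and conjugation) by a map $f\colon\text{PConf}_n(S_g)\to S_g$. By Lemma \ref{1}, I can write the induced map $R^*\colon H^1(S_g;\mathbb{Q})\to H^1(\text{PConf}_n(S_g);\mathbb{Q})\cong\bigoplus_{i=1}^n H_i$ as $R^*(c)=\sum_{i=1}^n p_i^*(A_i c)$ for linear maps $A_i\colon H^1(S_g;\mathbb{Q})\to H^1(S_g;\mathbb{Q})$.

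The next step is to constrain the $A_i$'s using degree~$2$ cohomology. For any $c\in H^1(S_g;\mathbb{Q})$ we have $c\cup c=0$, hence $R^*(c)\cup R^*(c)=0$ in $H^2(\text{PConf}_n(S_g);\mathbb{Q})$. Lifting to $H^2(S_g^n;\mathbb{Q})$ via Lemma \ref{1}, this forces $2\sum_{i<j} p_i^*(A_i c)\cup p_j^*(A_j c)$ to lie in the image of $\phi$ (since the diagonal terms vanish). By Lemma \ref{2}, the projection of $\text{Im}(\phi)$ to each $H_i\otimes H_j$ summand is the one-dimensional subspace spanned by $M_{ij}$, a nonzero multiple of the symplectic form of genus $g$. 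But $A_i c\otimes A_j c$ is a tensor of rank at most $1$, while $M_{ij}$ has rank $2g\geq 4$, so we must have $A_i c\otimes A_j c=0$. Thus for each $c$, at most one $A_i(c)$ is nonzero. Applying this constraint to $c_1$, $c_2$ and their sum rules out any index switching: there is a single index $i_0$ with $A_i=0$ for all $i\neq i_0$. Consequently $R^*|_{H^1}$ factors through $p_{i_0}^*$, and equivalently $R^{\mathrm{ab}}$ factors through $p_{i_0*}^{\mathrm{ab}}$.

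The main obstacle is upgrading this abelian-level factoring to a factoring at the level of $\pi_1$. Setting $K=\ker(p_{i_0*})\cong\pi_1(\text{PConf}_{n-1}(S_g-\{x_{i_0}\}))$, what we have shown only yields $R(K)\subset [\pi_1(S_g),\pi_1(S_g)]$, a free group of infinite rank (an infinite-index subgroup of a surface group of negative Euler characteristic). To conclude $R(K)=\{1\}$, the non-cyclic image hypothesis in part~(2) is essential: cyclic-image homomorphisms can be entirely supported in the commutator subgroup, so the assumption cannot be dropped. The strategy would be to assume $R(K)$ is nontrivial, combine this with the non-cyclicity of $\mathrm{Im}(R)$, and reapply the cohomological machinery to the image subgroup — playing the structure of subgroups of surface groups (which are either free or finite-index) against the $H^2$-constraint above — to produce a contradiction.

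Finally, part~(1) reduces to part~(2). If $R$ is surjective, its image $\pi_1(S_g)$ is certainly non-cyclic (since $g>1$), so $R=A\circ p_{i_0*}$ for some $A\colon\pi_1(S_g)\to\pi_1(S_g)$. The induced $A$ is surjective, and since surface groups are finitely generated and residually finite, hence Hopfian, $A$ must be an automorphism. This gives the desired form $R=A\circ p_{i_0*}$ with $A\in\mathrm{Aut}(\pi_1(S_g))$.
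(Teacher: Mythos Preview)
The paper does not prove this theorem at all; it simply cites \cite[Theorem 1.5]{lei1}. So there is no ``paper's own proof'' to compare against. That said, your attempt has two genuine problems, one of which you flag yourself and one of which you do not.

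\textbf{The cohomological step is vacuous.} Your key constraint is $R^*(c)\cup R^*(c)=0$, but this holds automatically: for any degree-$1$ class $x$ in a graded-commutative ring over $\mathbb{Q}$ one has $x\cup x=0$. Concretely, with $x=\sum_i p_i^*(A_i c)\in H^1(S_g^n;\mathbb{Q})$, the cross terms satisfy $p_j^*(A_j c)\cup p_i^*(A_i c)=-\,p_i^*(A_i c)\cup p_j^*(A_j c)$, so the off-diagonal sum cancels in pairs and $x\cup x=0$ already in $H^2(S_g^n;\mathbb{Q})$, not merely modulo $\mathrm{Im}(\phi)$. Your expression ``$2\sum_{i<j} p_i^*(A_i c)\cup p_j^*(A_j c)$'' is a sign error; the correct value is $0$, and hence no restriction on the $A_i$'s follows. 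To extract real information you would need to compare $R^*(c)\cup R^*(c')$ with $R^*(c\cup c')$ for \emph{distinct} classes $c,c'$, which is a substantially more involved computation and is where the actual work in \cite{lei1} lies.

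\textbf{The lift from $H_1$ to $\pi_1$ is not done.} You correctly identify this as ``the main obstacle'' and then offer only a sentence of strategy (``reapply the cohomological machinery to the image subgroup''). This is not a proof, and since the cohomological machinery you have in hand is the flawed one above, the sketch does not even point in a workable direction. This step is exactly where the non-cyclic hypothesis must be used in a serious way, and it requires an independent argument.

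Your reduction of (1) to (2) via Hopficity of surface groups is correct and is the standard way to deduce (1) once (2) is in hand.
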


\begin{proof}
See \cite[Theorem 1.5]{lei1}.
\end{proof}
Now, we are ready to prove (3) of Theorem \ref{main}.
\begin{proof}[\bf\boldmath Proof of (3) of Theorem \ref{main}]
Suppose that there is a map $s:\text{PConf}_n(S_g)\to \text{PConf}_{n+1}(S_g)$ such that $f_{n}(S_g)\circ s=\text{identity}$. Then after post-composing with a forgetful map of the last coordinate, we obtain a map $f: \text{PConf}_n(S_g)\to S_g$. We denote 

\[
g_i:\text{PConf}_n(S_g) \xrightarrow{(f,p_i)} \text{PConf}_2(S_g)\subset S_g\times S_g
\]
Let $\triangle\subset S_g\times S_g$ be the diagonal subspace and $[\triangle]\in H^2(S_g\times S_g;\mathbb{Q})$ be the Poincar\'e dual of $\triangle$. Let $f^*:H^1(S_g)\to H^1(\text{PConf}_n(S_g))$ and $f_*:\pi_1(\text{PConf}_n(S_g))\to \pi_1(S_g)$ be the induced map on cohomology and the fundamental groups. By Lemma \ref{3}, either $f_*$ factors though a forgetful map $p_{i*}$ or Image$(f_*)\cong \mathbb{Z}$. We break the proof into two cases according to the image of $f_*$.\\
\\
{\bf\boldmath Case 1: Image$(f_*)\cong \mathbb{Z}$.} There are two subcases:\\
\\
(1) If $f^*=0$, then $g_i^*([\triangle])=p_i^*[S_{g}]\neq 0$. This contradicts the fact that the image of $g_i$ misses $\triangle$.\\
\\
(2) If $f^*\neq 0$, then Im$f^*\cong \mathbb{Z}$ because $f_*$ has image $\mathbb{Z}$ on the fundamental groups. We assume that there exists a symplectic basis $\{a_k,b_k\}_{k=1}^g$ for $H^1(S_g;\mathbb{Q})$ such that $f^*(a_i)=0$ for any $i\neq 1$ and $f^*(b_i)=0$ for any $i$. Let $f^*(a_1)=(x_1,x_2,...,x_n)\neq 0\in \bigoplus_{i=1}^{n}H_i\cong H^1(\text{PConf}_n(S_g);\mathbb{Q})$. Assume without loss of generality that $x_1\neq 0$. Therefore for $k\neq 1$ by Lemma \ref{2}, we have that
\[
g_k^*([\triangle])=p_k^*[S_{g}]+\sum_{i=1,i\neq k}^{n}x_i\smile p_k^*b_1\in \bigoplus_{i=1}^{n}\mathbb{Q}p_i^*[S_g]\oplus \bigoplus_{i\neq j} H_i\otimes H_j\cong H^2(S_{g}^n;\mathbb{Q}). 
\]
The coordinate $x_1\otimes p_k^*b_1$ is not zero, therefore $g_k^*([\triangle])\neq 0$. This contradicts the fact that the image of $g_i$ misses $\triangle$.\\
\\
{\bf\boldmath Case 2: $f_*$ factors though the forgetful map $p_{i*}$.} Without loss of generality, we assume that $i=1$. We have that
\[
g_2^*([\triangle])=f^*[S_{g}]+p_2^*[S_{g}]+\sum_k f^*a_k \smile p_2^*b_k-f^*b_k\smile p_2^*a_k.
\]
Since Image$(f^*)\subset \text{Image}(p_{1}^*)$, we have that $g_2^*([\triangle])$ only has nonzero terms in $ \mathbb{Q}G_{12} \oplus H_1\otimes H_2$. The fact that $g_2$ misses $\triangle$ implies
\[
f^*[S_{g}]+p_2^*[S_{g}]+\sum_k f^*a_k \otimes p_2^*b_k-f^*b_k\otimes p_2^*a_k=\lambda ([\triangle_{12}])\in  \mathbb{Q}p_1^*[S_g]\oplus \mathbb{Q}p_2^*[S_g]\oplus H_1\otimes H_2.
\]
 The coefficient of $p_2^*[S_{g}]$ tells us that $\lambda=1$. Therefore we have that $f^*[S_g]=p_1^*[S_g]$ and 
\[
\sum_k (f^*a_k-p_1^*a_k) \otimes p_2^*b_k-(f^*b_k-p_1^*b_k)\otimes p_2^*a_k=0\in H_1\otimes H_2
\]
By the property of tensor product, we know that $f^*a_k-p_1^*a_k=0$ and $f^*b_k-p_1^*b_k=0$. However in this case, if we look at the map $g_1:\text{PConf}_n(S_g)\xrightarrow{(f,p_1)}S_g\times S_g$. We have that 
\[
g_1^*([\triangle])=f^*[S_{g}]+p_1^*[S_{g}]+\sum_k f^*a_k \smile p_1^*b_k-f^*b_k\smile p_1^*a_k=2p_1^*[S_g]-2g p_1^*[S_g]=(2-2g) p_1^*[S_g]\neq 0.
\]
This contradicts the fact that the image of $g_1$ misses $\triangle$.
\end{proof}

\section{Further questions}
In this section we list a few further questions. Let $m,n$ be two positive integers. Let $(x_1,...,x_n)\in \text{PConf}_n(S)$ for any manifold $S$. Let the permutation group $\Sigma_m$ acts on $\text{PConf}_{n+m}(S)$ by permuting the last $m$ points. We have the following fiber bundle:
\begin{equation}
\text{PConf}_{m}(S-\{x_1,...,x_n\})/\Sigma_m\to \text{PConf}_{n+m}(S)/\Sigma_m\xrightarrow{f_{n+m,n}(S)} \text{PConf}_n(S).
\label{FBG}
\end{equation}
Here denote by $f_{n+m,n}(S)$ the forgetful map that forgets the first $n$ points. A section of the fiber bundle (\ref{FBG}) is called a multi-section.
\begin{que}
Classify the continuous sections of the fiber bundle (\ref{FBG}) up to homotopy for $S$ a surface.
\end{que}
\begin{que}
Classify the continuous sections of the fiber bundle (\ref{FBG}) up to homotopy for any manifold $S$.
\end{que}

\bibliography{citing}{}

\begin{thebibliography}{BCWW06}

\bibitem[BCWW06]{MR2188127}
A.~J. Berrick, F.~R. Cohen, Y.~L. Wong, and J.~Wu.
\newblock Configurations, braids, and homotopy groups.
\newblock {\em J. Amer. Math. Soc.}, 19(2):265--326, 2006.

\bibitem[BLM83]{MR726319}
J.~Birman, A.~Lubotzky, and J.~McCarthy.
\newblock Abelian and solvable subgroups of the mapping class groups.
\newblock {\em Duke Math. J.}, 50(4):1107--1120, 1983.

\bibitem[BM06]{MR2253663}
R.~Bell and D.~Margalit.
\newblock Braid groups and the co-{H}opfian property.
\newblock {\em J. Algebra}, 303(1):275--294, 2006.

\bibitem[Cas16]{MR3499033}
F.~Castel.
\newblock Geometric representations of the braid groups.
\newblock {\em Ast\'erisque}, (378):vi+175, 2016.

\bibitem[Che16]{lei1}
L.~Chen.
\newblock The universal $ n $-pointed surface bundle only has $ n $ sections.
\newblock Pre-print, https://arxiv.org/abs/1611.04624, 2016.

\bibitem[CP15]{MR3398727}
F.~Cantero and M.~Palmer.
\newblock On homological stability for configuration spaces on closed
  background manifolds.
\newblock {\em Doc. Math.}, 20:753--805, 2015.

\bibitem[EE69]{EE}
C.~Earle and J.~Eells.
\newblock A fibre bundle description of {T}eichm\"uller theory.
\newblock {\em J. Differential Geometry}, 3:19--43, 1969.

\bibitem[EWG15]{jordan}
J.~Ellenberg and J.~Wiltshire-Gordon.
\newblock Algebraic structures on cohomology of configuration spaces of
  manifolds with flows.
\newblock Pre-print, https://arxiv.org/pdf/1508.02430.pdf, 2015.

\bibitem[Fad62]{FE}
E.~Fadell.
\newblock Homotopy groups of configuration spaces and the string problem of
  {D}irac.
\newblock {\em Duke Math. J.}, 29:231--242, 1962.

\bibitem[FH01]{FH}
E.~Fadell and S.~Husseini.
\newblock {\em Geometry and topology of configuration spaces}.
\newblock Springer Monographs in Mathematics. Springer-Verlag, Berlin, 2001.

\bibitem[FLP12]{FLP}
A.~Fathi, F.~Laudenbach, and V.~Po\'enaru.
\newblock {\em Thurston's work on surfaces}, volume~48 of {\em Mathematical
  Notes}.
\newblock Princeton University Press, Princeton, NJ, 2012.
\newblock Translated from the 1979 French original by Djun M. Kim and Dan
  Margalit.

\bibitem[FM12]{BensonMargalit}
B.~Farb and D.~Margalit.
\newblock {\em A primer on mapping class groups}, volume~49 of {\em Princeton
  Mathematical Series}.
\newblock Princeton University Press, Princeton, NJ, 2012.

\bibitem[FN62]{MR0141126}
E.~Fadell and L.~Neuwirth.
\newblock Configuration spaces.
\newblock {\em Math. Scand.}, 10:111--118, 1962.

\bibitem[GG03]{MR1977999}
D.~L. Gon\c{c}alves and J.~Guaschi.
\newblock On the structure of surface pure braid groups.
\newblock {\em J. Pure Appl. Algebra}, 182(1):33--64, 2003.

\bibitem[GG05]{MR2149513}
D.~L. Gon\c{c}alves and J.~Guaschi.
\newblock The braid group {$B_{n,m}({\Bbb S}^2)$} and a generalisation of the
  {F}adell-{N}euwirth short exact sequence.
\newblock {\em J. Knot Theory Ramifications}, 14(3):375--403, 2005.

\bibitem[HT02]{MR1943336}
H.~Hamidi-Tehrani.
\newblock Groups generated by positive multi-twists and the fake lantern
  problem.
\newblock {\em Algebr. Geom. Topol.}, 2:1155--1178, 2002.

\bibitem[Mar02]{MR1943337}
D.~Margalit.
\newblock A lantern lemma.
\newblock {\em Algebr. Geom. Topol.}, 2:1179--1195, 2002.

\bibitem[McC82]{McCarthy}
J.~McCarthy.
\newblock Normalizers and centralizers of pseudo-anosov mapping classes.
\newblock Pre-print:
  http://users.math.msu.edu/users/mccarthy/publications/normcent.pdf, 1982.

\bibitem[McD75]{MR0358766}
D.~McDuff.
\newblock Configuration spaces of positive and negative particles.
\newblock {\em Topology}, 14:91--107, 1975.

\bibitem[MM09]{MR2490001}
D.~Margalit and J.~McCammond.
\newblock Geometric presentations for the pure braid group.
\newblock {\em J. Knot Theory Ramifications}, 18(1):1--20, 2009.

\bibitem[Seg74]{MR0353298}
G.~Segal.
\newblock Categories and cohomology theories.
\newblock {\em Topology}, 13:293--312, 1974.

\bibitem[Sma59]{MR0112149}
S.~Smale.
\newblock Diffeomorphisms of the {$2$}-sphere.
\newblock {\em Proc. Amer. Math. Soc.}, 10:621--626, 1959.

\bibitem[Tot96]{totaro}
B.~Totaro.
\newblock Configuration spaces of algebraic varieties.
\newblock {\em Topology}, 35(4):1057--1067, 1996.

\bibitem[Wol87]{Wolpert}
S.~Wolpert.
\newblock Geodesic length functions and the {N}ielsen problem.
\newblock {\em J. Differential Geom.}, 25(2):275--296, 1987.

\end{thebibliography}
\vspace{5mm}
\hfill \break

California Institute of Technology 

Department of Mathematics 

Pasadena, CA 91125, USA

E-mail: chenlei1991919@gmail.com

\end{document}